\crefname{hypothesis}{Hypothesis}{Hypotheses}
\def\la{\langle}
\def\ra{\rangle}
\def\cF{{\cal F}}
\def\cJ{{\cal J}}
\def\1{{\bf 1}}
\def\A{\mathscr{A}}
\def\ol{\bar}
\def\b{\beta}
\def\a{\alpha}
\def\o{\omega}
\def\R{\mathbb{R}}
\def\re{\mathbb{R}}
\def\dist{{\rm dist}}
\newcommand{\EXP}[1]{\mathsf{E}\!\left[#1\right] }
\def\argmin{\mathop{\rm argmin}}
\def\be{\begin{equation}}
\def\ee{\end{equation}}
\def\bit{\begin{itemize}}
\def\eit{\end{itemize}}
\newcommand{\cmark}{\ding{51}}
\newcommand{\xmark}{\ding{55}}
\newcommand{\remove}[1]{}
\title{Randomized Feasibility-Update Algorithms for Variational Inequality Problems}
\author{Abhishek Chakraborty\thanks{School of Electrical Computer and Energy Engineering, Arizona State University, Tempe, Arizona, USA
  (\email{achakr61@asu.edu}, \email{Angelia.Nedich@asu.edu}). \funding{This work was funded by the ONR award N00014-21-1-2242 and the NSF grant CIF-2134256.}}
\and Angelia Nedi\'c\footnotemark[2]}
\begin{document}

\maketitle

\begin{abstract}
This paper considers a variational inequality (VI) problem arising from a game among multiple agents, where each agent aims to minimize its own cost function subject to its constrained set represented as the intersection of a (possibly infinite) number of convex functional level sets. A direct projection-based approach or Lagrangian-based techniques for such a problem can be computationally expensive if not impossible to implement. To deal with the problem, we consider randomized methods that avoid the projection step on the whole constraint set by employing random feasibility updates. In particular, we propose and analyze such random methods for solving VIs based on the projection method, Korpelevich method, and Popov method. We establish the almost sure convergence of the methods and, also, provide their convergence rate guarantees. We illustrate the performance of the methods in simulations for two-agent games.
\end{abstract}


\begin{keywords}
Variational Inequality, Randomized Method, Modified Projection Method, Modified Korpelevich Method, Modified Popov Method.
\end{keywords}



\section{Introduction}\label{sec:Intro}
Multi-agent games arise in a wide range of domains such as economics~\cite{van2012dynamic}, transportation science~\cite{acciaio2021cournot}, multi-agent reinforcement learning~\cite{lanctot2017unified}, and many other areas. A multi-agent game is characterized by an agent specific payoff function representing the cost incurred by an agent's decision and the decisions taken by all the other agents playing the game. An agent can minimize the cost with respect to its own decision variable subject to the agent's constraints~\cite{facchinei2003finite}. With a well defined mapping, corresponding to the gradients of the payoff functions of all the players, a multi-agent game can be formulated as a Variational Inequality (VI) problem, which can be solved with standard algorithms  \cite{solodov1999new, korpelevich1976extragradient, popov1980modification, tseng1995linear} under some assumptions on the mapping of the game. These algorithms typically employ the projection on the players' constraint sets.
This paper deals with the case when the constraint sets are specified as {\it the intersection of a large number of convex functions' level sets},
which prohibit the direct use of the projection operation. 
To deal with such problems, this paper proposes the methods with random feasibility updates, which have been studied for optimization problems, but remained less explored for VIs.
%
%

Related work in the optimization domain dealing with large number of functional constraints
dates back to~\cite{polyak1969minimization, polyak2001random}, where a feasibilty problem was considered. Subsequently, randomized methods for a convex feasibility problem were studied in~\cite{nedic2010random, necoara2021minibatch}. 
The papers~\cite{liu2015averaging,wang2016stochastic} deal with finitely many constraints, where a direct projection on a randomly selected set is used.
The paper \cite{fercoq2019almost} deals with the optimization problem with infinitely many constraints using duality and smoothing techniques, whereas~\cite{nedic2011random, nedic2019random, necoara2022stochastic} use random feasibility updates
and solve the optimization problems with infinitely, finitely and infinitely many constraints, respectively.
Another approach to construct a  projection free method is the use of conditional gradients/Frank-Wolfe techniques~\cite{woodstock2023splitting} and level conditional gradient method \cite{cheng2022functional}, which are currently limited to problems with finitely many constraints.



\begin{table*}[t]\centering
\begin{tabular}{ |c|c|c|c|c| } 
\hline
Work & Method & Mapping & \# Constraints & Project on $X_{j}$ \\
\hline
\cite{wang2015incremental} & Proj & Strongly monotone & Finite & \cmark\\
\hline
\cite{cui2021analysis} & SPRG, SSE & Monotone & Finite & \cmark\\
\hline
Ours & Proj, Kor, Popov &  Strongly monotone & Infinite & \xmark\\
\hline
\end{tabular}
\caption{Randomized projection schemes for VIs. 
with Lipschitz continuous mappings. ``SPRG" and ``SSE" refer to Stochastic Projected Reflected Gradient and Stochastic Subgradient Extragradient methods, respectively, of~\cite{cui2021analysis}. ``Proj" and ``Kor" refer to the Projection and the Korpelevich methods, respectively; note that Korpelevich method is the same as the extra-gradient method studied in \cite{cui2021analysis}, whereas SPRG is a version of Malitsky's method \cite{malitsky2015projected}. The last column indicates whether or not the projection on a randomly selected set $X_{j}$ is used.
Our methods do not use such projections and can handle infinitely many constraints.} 
\label{tab:previous_works_VI}
\end{table*}


In the context of VIs, the paper~\cite{boroun2023projection} deals with nonconvex-concave saddle point problem and uses primal-dual conditional gradient method. The paper~\cite{alizadeh2023randomized} solves stochastic monotone Nash games using randomized block primal-dual scheme. Both these works are for settings with finitely many constraints. Moreover, the use of primal-dual techniques requires updates of many dual variables at each iteration, which can make the computation cumbersome and  impossible to implement
for infinitely many constraints. Works~\cite{chavdarova2023primal,yang2022solving} use log barrier methods for constraint satisfaction, whereas~\cite{zhang2024primal} develops a primal scheme without the need to know the optimal Lagrange multiplier to solve a generalized VI problem. The first work on incremental and random projection algorithms for solving VIs is~\cite{wang2015incremental}, where a stochastic VI with a strongly monotone and Lipschitz continuous
expected-value mapping is considered. Subsequently,~\cite{cui2021analysis} extended the work in~\cite{wang2015incremental} to stochastic VIs for a monotone, weakly sharp and Lipschitz continuous mapping. None of these works have developed results for the case of infinitely many constraints. Also, they have not considered functional constraints. This paper addresses these gaps for deterministic VIs (see Table \ref{tab:previous_works_VI}).

The motivation for considering the problems with infinitely many constraints such as stochastic constraints of the form $\{x\in\re^{n}\mid g(x,\xi)\le 0 \}$, where $\xi$ is a continuous random variable, comes from problems emerging in online settings dealing with multi-agent reinforcement learning with safety constraints \cite{gu2021multi, miryoosefi2019reinforcement}, or energy constraints \cite{bai2024learning}, and also in the context of imitation learning with exploration~\cite{shao2024imitating}. 
Problems with many constraints can also appear in Nash-Cournot equilibrium problems in economics, constrained GANs \cite{heim2019constrained}, and in almost sure chance constrained problems~\cite{fercoq2019almost}.

\noindent
\textit{Contributions:} (1) To the best of our knowledge, this is the first work that considers VIs with potentially infinitely many constraints. Also, it is the first to consider randomized methods for VIs with functional constraints.  
A modified versions of the projection method, Korpelevich method, and Popov method are proposed and analyzed. They deal with (potentially infinitely many) functional constraints by utilizing
random feasibility updates~\cite{polyak2001random, nedic2019random, gower2022cutting}. 
(2)~For all the proposed algorithms, the convergence of the iterates to the solution of the VI is established in almost sure sense and in expectation. 
We do not assume that the constraint set is compact.
(3)~Per iterate convergence rate is shown for the expected squared-distance of the iterates to the solution of the VI problem.
(4)~For the first time,  per iterate convergence to the constraint set is shown, in expectation, 
which has a geometric decay with respect to the number of random constraints that are drawn
times $O(1/\sqrt{k})$. This is unlike~\cite{cui2021analysis}, which shows the convergence rate of only $O(1/\sqrt{k})$ for weighted averaged iterates. It is also unlike the convergence result of \cite{wang2015incremental}, which is not per iterate but rather for the minimum distance achieved at a given iteration and, also, provides a rate for approaching a ``vicinity of the set". 

While the random feasibility updates are well understood for the feasibility \cite{necoara2021minibatch} and optimization problems \cite{nedic2019random}, they were not studied in the context of VIs. The main challenge here compared to optimization is in the analysis for the VIs. In particular, the analysis for the proposed modified Projection, Korpelevich and Popov methods is intricate, where careful splitting of the error terms allows us to obtain a number of random samples for the feasibility updates independent of the problem parameters. Consequently, the methods work even with a single sampled constrained set at a time.

\noindent
\text{\it Paper organization:}
We state the problem and its VI formulation in Section~\ref{sec:problem}. The random feasibility update algorithm and its related proofs are in Section~\ref{sec:inf_updates}. The modified Projection method, Korpelevich method, and Popov method that utilize random feasibility updates are proposed and analyzed (with their related convergence guarantees) in 
Sections~\ref{sec:proj_method}, \ref{sec:Korpelevich_method}, and \ref{sec:Popov_method}, respectively. 
Section~\ref{sec:simulations} shows the simulation results on two toy problems, a matrix game and a game of imitation learning with exploration. Finally, some conclusions, limitations, and future directions are provided in Section~\ref{sec:conclusion}.

\noindent
\textit{Notations:} We consider the vector space $\R^n$.  
 The inner product of two vectors $x$ and $y$ is denoted by $\la x,y\ra$, 
 while $\|\cdot\|$ is the standard Euclidean norm. The distance of a vector $\ol x$ 
 from a closed convex set $X$ is given by
 $\dist(\ol x,X)=\min_{x\in X}\|x-\ol x\|.$
 The projection of a vector $\ol x$
 on the set $X$ is  $\Pi_X[\ol x]=\argmin_{x\in X}\|x-\ol x\|^2.$
 For a scalar $a$, we use $a^+$ to denote the maximum of $a$ and~$0$,
 i.e.\ $a^+=\max\{a,0\}$.
 We use $\EXP{\o}$ for the expectation of a random variable~$\o$.
 We often abbreviate {\it almost surely} by {\it a.s.}


\section{Problem Formulation and Preliminaries}\label{sec:problem}
We consider a game among $J$ agents which are indexed by $j \in \cJ = \{1,\ldots,J\}$.
Each agent $j$ has a cost function $\theta_j(x_j,x_{-j}): \R^n \rightarrow \R$ that depends on its decision $x_j \in \R^{n_j}$ 
and the decision of all the other agents, denoted by $x_{-j}$ (the concatenated decisions of all the other agents).
Every agent $j$ wants to minimize its own cost function subject to a constrain set: 
\begin{align}
    \min_{x_j \in \R^{n_j}}\quad &\theta_j(x_j,x_{-j}) \nonumber\\
    \text{s.t.} \quad &x_j \in S_j = X_j \cap Y_j \quad \text{with}
     \quad X_j=\left(\cap_{a_j\in \A_j} X_{a_j} \right), \nonumber\\
     \text{and} \quad &X_{a_j} = \{x_j\in\re^{n_j}\mid g_{a_j}(x_j)\le 0\}, \label{eq-problem}
\end{align}
where $\A_j$ is an index set (possibly infinite) for each $j$. 
In this paper, we view $\A_j$ as an index set and $a_j$ as its associated index element. One can also view the index $a_j$ as a random variable taking values in the set $\A_j$. In this case, given a random value $a_j$, 
the constraint set $X_{a_j}$ would coincide with $\{x_j \in \R^{n_j} \mid g(x_j,a_j) \leq 0\}$, similar to \cite{necoara2022stochastic}. 

We let $n=\sum_{j=1}^J n_j$ and $x= (x_1, \ldots, x_J)^T \in \R^n$. It is assumed that the set $Y_j$ is simple to project on, but the projection on the set $X_j\cap Y_j$ is computationally demanding, and we seek to reduce the computation by employing random projections. We let the sets $X$, $Y$, and $S$ be the Cartesian products of the sets $X_j$, $Y_j$, and $S_j$, respectively, for all $j$, i.e., $X= \prod_{j=1}^J X_j$, $Y= \prod_{j=1}^J Y_j$, and $S= \prod_{j=1}^J S_j$.
Next, we present an assumption on the sets $Y_j$ and $X_j$.
\begin{assumption}\label{asum_closed_set}
For all $j \in \cJ$, 
the set $Y_j\subseteq\re^{n_j}$ is closed and convex, the set $S_j = X_j \cap Y_j$ is  non-empty, and the function $g_{a_j}:\re^{n_j}\to\re$ is convex for all $a_j \in\A_j$. 
\end{assumption}
By Assumption~\ref{asum_closed_set}, each function $g_{a_j}$ is continuous over $\re^{n_j}$ \cite[Proposition 1.4.6]{bertsekas2003convex}, implying that 
 the set $\{x_j\in\re^{n_j}\mid g_{a_j}(x_j)\le0\}$ is closed and convex. Hence, the set $X_j$ is closed and convex and so is the Cartesian product set $X$. Additionally, for each $j \in \cJ$, the subdifferential set $\partial g_{a_j}(x_j)$ is nonempty for all $x_j\in \re^{n_j}$ and all $a_j \in\A_j$~\cite[Proposition 4.2.1]{bertsekas2003convex}, implying that $\partial g_{a_j}^+(x_j)\ne\emptyset$
for all $x_j\in \re^{n_j}$ and $a_j \in\A_j$. The next assumption is on the functional constraints and their subgradients.
 
 \begin{assumption}\label{asum-subgrad-bound} 
 For all $j \in \cJ$ and all $a_j\in \A_j$, the constraint function $g_{a_j} : \R^n \rightarrow \R$ has bounded subgradients on the set $Y_j$, 
 i.e., there is a scalar $M_g>0$, such that 
  \begin{align*}
      \|d\| \leq M_g \;, \quad  \forall d \in \partial g_{a_j}(x_j) \;, x_j \in Y_j, a_j\in\A_j, \text{ and } j=1,\ldots,J . 
  \end{align*}
 \end{assumption}
 A bounded $Y_j$ implies bounded subgradients on the set $Y_j$ \cite[Proposition 4.2.3]{bertsekas2003convex}. 

With a differentiable cost function, the game defined in \eqref{eq-problem} can be equivalently written as a VI problem with mapping $F:Y \rightarrow \R^{n}$ defined as the stacked gradient vector of the costs of all the players, i.e. 
\begin{align}
    F(x) = \begin{bmatrix}
\nabla_{x_1} \theta_1 (x), \ldots, \nabla_{x_J} \theta_J (x) 
\end{bmatrix}^T . \nonumber
\end{align}
%
%
%
From now onwards, we \textit{concentrate on solving a variational inequality problem} of finding $x^* \in \text{SOL}(S, F)$ such that 
\begin{align}
    \langle F(x^*), x - x^* \rangle \geq 0 \: , \qquad \forall x \in S , \label{VI_problem}
\end{align}
where $\text{SOL}(S, F)$ is the solution set of VI$(S, F)$. The algorithms we consider will have feasible updates with respect to $Y$, but possibly infeasible updates with respect to the set $X$. We assume that the mapping $F$ is defined over the set $Y$, with additional properties, as follows.
\begin{assumption}\label{asum_lipschitz}
    The mapping $F:Y \rightarrow \R^{n}$ is Lipschitz continuous over $Y$ with the constant $L>0$,
   i.e.,
   \begin{align}
       \|F(x)-F(y)\| \le L \|x-y\| \quad \forall x,y\in Y. \nonumber
   \end{align}
\end{assumption}
\begin{assumption}\label{asum_strong_monotone}
    The mapping $F:Y \rightarrow \R^{n}$ is strongly monotone
   over $Y$ with the constant $\mu>0$, i.e.,
   \begin{align}
       \la F(x) - F(y), x - y \ra \ge \mu \|x-y\|^2 \quad \forall x,y\in Y. \nonumber
   \end{align}
\end{assumption}
Assumption \ref{asum_strong_monotone} implies that the VI$(S,F)$ has a unique solution $x^*\in S$ \cite[Theorem 2.3.3 and Proposition 2.2.3]{facchinei2003finite}. For the mapping $F$, we define the condition number $\kappa$ as
\begin{align}\label{eq-cnum}
    \kappa =\frac{L}{\mu} .
\end{align}

Next, we present two auxiliary results related to near supermartingale convergence, which we use in our analysis of the methods later on.
%
%
%
\begin{lemma}\label{lem_near_super_martingale}
    Let  $\{X_k\}$ be a sequence of non-negative (scalar) random variables, with $\EXP{X_0} < \infty$, and satisfying the following relation 
    almost surely for all $k \geq 0$,
    \begin{align*}
        \EXP{X_{k+1} \mid X_0,\ldots,X_k} &\leq (1- \Tilde{\gamma}_k) X_k + b_k,
    \end{align*}
    where $\{\tilde \gamma_k\}$ and $\{b_k\}$ are deterministic sequences satisfying $0 \leq \Tilde{\gamma}_k \leq 1$, $\sum_{k=0}^\infty \Tilde{\gamma}_k = \infty$, and $b_k \geq 0$ with $\sum_{k=0}^\infty b_k < \infty$. Then, $\lim_{k \rightarrow \infty} X_k = 0$ a.s. and $\lim_{k \rightarrow \infty} \EXP{X_k} = 0$.
\end{lemma}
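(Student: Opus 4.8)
The plan is to handle the two conclusions separately: the Robbins--Siegmund near-supermartingale convergence theorem for the almost-sure statement, and a direct deterministic recursion for the statement in expectation. To begin, I would set $\mathcal{F}_k=\sigma(X_0,\dots,X_k)$ and rewrite the hypothesis as $\EXP{X_{k+1}\mid\mathcal{F}_k}\le X_k-\tilde\gamma_k X_k+b_k$. Since $X_k\ge0$, the subtracted term $\tilde\gamma_k X_k$ is non-negative, and $\sum_k b_k<\infty$, this is precisely the form covered by the Robbins--Siegmund theorem (with multiplicative factor $1$, non-negative subtractive term $\tilde\gamma_k X_k$, and summable additive term $b_k$). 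That theorem yields two facts simultaneously, holding almost surely: (i) $X_k$ converges to a finite non-negative random variable $X_\infty$, and (ii) $\sum_{k=0}^\infty \tilde\gamma_k X_k<\infty$.

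Next I would upgrade the limit $X_\infty$ to $0$. Suppose, toward a contradiction, that $X_\infty=c>0$ on an event of positive probability. On that event $X_k\ge c/2$ for all $k$ beyond some random index $K$, so $\sum_{k\ge K}\tilde\gamma_k X_k\ge(c/2)\sum_{k\ge K}\tilde\gamma_k=\infty$ because $\sum_k\tilde\gamma_k=\infty$. This contradicts (ii), which holds a.s. Hence $X_\infty=0$ a.s., i.e. $\lim_k X_k=0$ a.s.

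For the convergence in expectation I would avoid appealing to uniform integrability, which is not granted here, and instead take expectations in the hypothesis. Writing $a_k=\EXP{X_k}$, which is finite for every $k$ since $a_{k+1}\le a_k+b_k$ forces $a_k\le a_0+\sum_j b_j<\infty$, I obtain the deterministic recursion $a_{k+1}\le(1-\tilde\gamma_k)a_k+b_k$ with $a_k\ge0$. Unrolling gives $a_k\le\big(\prod_{i=0}^{k-1}(1-\tilde\gamma_i)\big)a_0+\sum_{j=0}^{k-1}\big(\prod_{i=j+1}^{k-1}(1-\tilde\gamma_i)\big)b_j$. Using $\log(1-x)\le-x$ together with $\sum_i\tilde\gamma_i=\infty$, every tail product $\prod_{i=j+1}^{k-1}(1-\tilde\gamma_i)$ tends to $0$ as $k\to\infty$, so the first term vanishes; in the second term I would split the sum at a large index $N$, bounding the tail $\sum_{j\ge N}b_j$ by summability and letting the finitely many leading coefficients tend to $0$ as $k\to\infty$. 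This yields $\limsup_k a_k\le\epsilon$ for arbitrary $\epsilon>0$, hence $\EXP{X_k}=a_k\to0$.

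The main obstacle I anticipate is the passage in the second paragraph from $\sum_k\tilde\gamma_k X_k<\infty$ a.s. and $\sum_k\tilde\gamma_k=\infty$ to $X_\infty=0$, which must be argued carefully on the random event $\{X_\infty>0\}$, and the bookkeeping in the deterministic unrolling of the third paragraph, where the order of the two limits (the fixed-$N$ truncation versus $k\to\infty$) has to be respected. Neither step is deep, but both demand precision rather than a one-line estimate.
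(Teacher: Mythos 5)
Your proof is correct. The almost-sure part is exactly the paper's argument: rewrite the recursion in Robbins--Siegmund form, extract (i) a.s.\ convergence of $X_k$ to some $\bar X\ge 0$ and (ii) $\sum_k \tilde\gamma_k X_k<\infty$ a.s., and then use $\sum_k\tilde\gamma_k=\infty$ to force $\bar X=0$; your contradiction argument on the event $\{X_\infty>0\}$ just spells out what the paper leaves implicit. For the convergence in expectation, the paper takes total expectations and says the conclusion follows "by similar arguments," i.e.\ by running the same supermartingale-style reasoning on the deterministic sequence $a_k=\EXP{X_k}$ (namely, $a_k$ converges since $a_k+\sum_{j\ge k}b_j$ is non-increasing, and $\sum_k\tilde\gamma_k a_k<\infty$ forces the limit to be zero). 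You instead unroll the recursion $a_{k+1}\le(1-\tilde\gamma_k)a_k+b_k$ explicitly, kill the leading product via $\log(1-x)\le -x$ and the divergence of $\sum_k\tilde\gamma_k$, and handle the convolution term by truncating at a fixed $N$. Both routes are standard and equally valid; yours is more self-contained and makes the order of limits explicit, at the cost of a little bookkeeping, while the paper's is shorter because it reuses the machinery already invoked for the almost-sure claim. You are also right that uniform integrability is not needed and should not be invoked here.
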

\begin{proof}
The given relation can be written  as $\EXP{X_{k+1} \mid X_0,\ldots,X_k} \leq X_k - \Tilde{\gamma}_k X_k + b_k$ a.s.\ for all $k\ge0$. We apply Robbins-Siegmund's super-martingale convergence theorem~\cite{robbins1971convergence} to the preceding relation to conclude that (i) $\lim_{k \rightarrow \infty} X_k = \bar X$ a.s., where $\bar X$ is some non-negative random variable, and (ii) $\sum_{k=0}^\infty \Tilde{\gamma_k} X_k < \infty$ a.s. Now from this last relation along with the fact that $\sum_{k=0}^\infty \Tilde{\gamma_k} = \infty$, we can conclude that $\bar X = 0$ a.s., and hence $\lim_{k \rightarrow \infty} X_k = 0$ a.s. Taking the total expectation on both sides of the relation 
$\EXP{X_{k+1} \mid X_0,\ldots,X_k} \leq X_k - \Tilde{\gamma}_k X_k + b_k$ 
and following similar arguments
as above, we obtain $\lim_{k \rightarrow \infty} \EXP{X_k} = 0$.
\end{proof}
\begin{lemma}\label{lem_near_super_martingale2}
    Let $\{Y_k\}$ and $\{Z_k\}$ be two sequences of non-negative random variables such that $Y_k \leq Z_k$ for all $k \geq 0$, $\lim_{k \rightarrow \infty} Z_k = 0$ a.s.\ and $\lim_{k \rightarrow \infty} \EXP{Z_k} = 0$. Then, the sequence $\{Y_k\}$ also converges to $0$ a.s.\ and in expectation.
\end{lemma}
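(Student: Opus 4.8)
The plan is to obtain both conclusions from the two-sided pathwise bound $0 \le Y_k \le Z_k$ by a squeeze (sandwich) argument, handling the almost sure statement pointwise in $\omega$ and the in-expectation statement through monotonicity of the expectation. Since the lower bound $0$ and the upper bound $Z_k$ both tend to $0$ (a.s.\ and in expectation, respectively), the intermediate quantity $Y_k$ is forced to the same limit in each mode of convergence.

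For the almost sure part, I would fix a sample point $\omega$ belonging to the probability-one event on which $Z_k(\omega) \to 0$. On this event the deterministic inequalities $0 \le Y_k(\omega) \le Z_k(\omega)$ hold for every $k$, and since the outer bounds $0$ and $Z_k(\omega)$ both converge to $0$, the squeeze theorem for real sequences gives $Y_k(\omega) \to 0$. As this holds for every $\omega$ in a set of probability one, we conclude $\lim_{k\to\infty} Y_k = 0$ a.s. For the in-expectation part, I would take expectations in $0 \le Y_k \le Z_k$ and use monotonicity of the expectation to get $0 \le \EXP{Y_k} \le \EXP{Z_k}$; here the expectations are well defined in $[0,\infty]$ by nonnegativity, and $\EXP{Y_k}$ is finite because it is dominated by $\EXP{Z_k}$, which is finite for large $k$ since $\EXP{Z_k}\to 0$. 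Letting $k\to\infty$ and invoking $\EXP{Z_k}\to 0$ together with the squeeze theorem for deterministic sequences yields $\lim_{k\to\infty}\EXP{Y_k} = 0$.

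There is no substantive obstacle in this lemma: it is a direct sandwich in both modes of convergence. The only point requiring a moment of care is the well-definedness of the expectations, which is secured by the nonnegativity of the random variables (so expectations exist in the extended reals) and by the domination $Y_k \le Z_k$ with $\EXP{Z_k}$ eventually finite; beyond this bookkeeping, the argument is immediate.
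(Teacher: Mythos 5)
Your proposal is correct and follows essentially the same route as the paper: the paper also combines the pointwise bound $0 \le Y_k \le Z_k$ with $\liminf_{k\to\infty} Y_k \ge 0$ and $\limsup_{k\to\infty} Y_k \le \lim_{k\to\infty} Z_k = 0$ (and the analogous limsup/liminf squeeze for $\EXP{Y_k} \le \EXP{Z_k}$), which is precisely your sandwich argument in both modes of convergence.
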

\begin{proof}
    The random variables $Y_k$ are non-negative
    implying that $\EXP{Y_k}\ge0$, and
    \begin{align}
      0\le \liminf_{k\to\infty} Y_k\quad{\it a.s.},\qquad 0\le \liminf_{k\to\infty} \EXP{Y_k}. \label{eq-ykzk}
    \end{align}
    Since $ Y_k \leq Z_k$ for all $k \geq 0$, by taking the limit superior in this relation and using the fact that $\lim_{k \rightarrow \infty} Z_k = 0$ almost surely, we obtain $\limsup_{k \rightarrow \infty} Y_k \leq \lim_{k \rightarrow \infty} Z_k = 0$ almost surely. From this condition and \eqref{eq-ykzk}, we conclude that $\lim_{k \rightarrow \infty} Y_k = 0$ almost surely. Additionally, since $Y_k \leq Z_k$ for all $k \geq 0$, it follows that $\EXP{Y_k} \leq \EXP{Z_k}$ for all $k \geq 0$. So using $\lim_{k \rightarrow \infty} \EXP{Z_k} = 0$, we see that $\limsup_{k \rightarrow \infty} \EXP{Y_k} \leq \lim_{k \rightarrow \infty} \EXP{Z_k} = 0$. This relation along with~\eqref{eq-ykzk} imply that $\lim_{k \rightarrow \infty} \EXP{Y_k}=0$.
\end{proof}



\section{Randomized Feasibility Updates}\label{sec:inf_updates}
Here, we consider the randomized feasibility updates following~\cite{polyak2001random,nedic2019random,necoara2021minibatch}. The basic idea of the scheme is to randomly sample some sets $X_{a_j}$ out of the family $\{X_{a_j}, a_j\in\A_j\}$ for all $j \in \cJ$ and, then, sequentially perform feasibility updates. 
Unlike~\cite{wang2015incremental, cui2021analysis} that use the direct projection on a randomly selected set, we consider randomized feasibility updates since the projection on the functional constraints need not have a closed form expression.
 
 We next present a result for one step feasibility update for a functional constraint.
\begin{lemma}\label{lemma:basiter}
   Let $h$ be a convex function over a nonempty convex closed set $Z$. Given a vector $z\in Z$, a non-zero direction $d\in\partial h^+(z)$, and step size $\beta>0$, let $\hat z$ be given by
   \begin{align}
       \hat z=\Pi_{Z}\left[z-\b \frac{h^+(z)}{\|d\|^2}\,d\right].\label{inf_upd}
   \end{align}
    Then, for any $\bar z\in Z$ such that $h^+(\bar z)=0$, we have
     \[\|\hat z -\bar z\|^2 \le  \|z -\bar z\|^2
       -\b(2-\beta)\,\frac{(h^+(z))^2}{\|d\|^2}.\]
\end{lemma}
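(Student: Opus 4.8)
The plan is to combine the standard nonexpansiveness property of the Euclidean projection with the defining subgradient inequality for the convex function $h^+=\max\{h,0\}$. First I would introduce the intermediate (pre-projection) point $u=z-\beta\frac{h^+(z)}{\|d\|^2}\,d$, so that $\hat z=\Pi_Z[u]$. Since $\bar z\in Z$ and projection onto a closed convex set is nonexpansive (equivalently, $\bar z$ is its own projection, so the fundamental projection inequality applies), I would start from $\|\hat z-\bar z\|^2\le\|u-\bar z\|^2$, which reduces the problem to estimating $\|u-\bar z\|^2$.

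Next I would expand the square, obtaining
\[ \|u-\bar z\|^2=\|z-\bar z\|^2-2\beta\,\frac{h^+(z)}{\|d\|^2}\,\langle d,z-\bar z\rangle+\beta^2\,\frac{(h^+(z))^2}{\|d\|^2}, \]
where the last term is simplified using $\frac{1}{\|d\|^4}\,\|d\|^2=\frac{1}{\|d\|^2}$. The crux is to control the cross term. Because $d\in\partial h^+(z)$ is a subgradient of the convex function $h^+$ at $z$, the subgradient inequality gives $h^+(\bar z)\ge h^+(z)+\langle d,\bar z-z\rangle$, i.e.\ $\langle d,z-\bar z\rangle\ge h^+(z)-h^+(\bar z)$. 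Invoking the hypothesis $h^+(\bar z)=0$, this becomes $\langle d,z-\bar z\rangle\ge h^+(z)$.

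The step that requires the most care is the sign bookkeeping when substituting this lower bound. Since $\beta>0$ and $h^+(z)\ge0$, the coefficient $-2\beta\frac{h^+(z)}{\|d\|^2}$ multiplying the cross term is nonpositive, so replacing $\langle d,z-\bar z\rangle$ by its lower bound $h^+(z)$ produces an \emph{upper} bound on the whole expression:
\[ -2\beta\,\frac{h^+(z)}{\|d\|^2}\,\langle d,z-\bar z\rangle\le-2\beta\,\frac{(h^+(z))^2}{\|d\|^2}. \]
Substituting this into the expansion gives $\|u-\bar z\|^2\le\|z-\bar z\|^2-(2\beta-\beta^2)\frac{(h^+(z))^2}{\|d\|^2}$, and combining it with the projection inequality from the first step yields the claimed bound with factor $\beta(2-\beta)$. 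The only genuine subtlety is this sign analysis; everything else is routine. I would also note in passing that $h^+$ is convex as the pointwise maximum of $h$ and $0$, so $\partial h^+(z)$ and its subgradient inequality are legitimate, and that $\|d\|\ne0$ by hypothesis, so all divisions are well defined.
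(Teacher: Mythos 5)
Your proof is correct, and it is essentially the standard argument: the paper does not prove this lemma itself but cites Polyak's Theorem~1, whose proof proceeds exactly as you do---nonexpansiveness of $\Pi_Z$ at the feasible point $\bar z$, expansion of the square, and the subgradient inequality for $h^+$ combined with $h^+(\bar z)=0$ to bound the cross term. The sign bookkeeping you flag is handled correctly, so nothing further is needed.
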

The proof of Lemma~\ref{lemma:basiter} can be found in~\cite[Theorem~1]{polyak1969minimization}; also see~\cite{polyak2001random}. For $0<\beta<2$, Lemma~\ref{lemma:basiter}
implies that the point $\hat z$
obtained via~\eqref{inf_upd} is closer to the level set $\{\widetilde z \in Z \mid h(\widetilde z) \leq 0\}$ than the point $z$.

%
%
%
\begin{algorithm}
		\caption{Random Feasibility Steps (For Agent $j$)}
		\label{algo_proj_steps}
		\begin{algorithmic}[1]
			\REQUIRE{$v_{k,j}$, $N_{k,j}$, deterministic step size $0 < \beta < 2$}
                \STATE \textbf{Set:} $z_{k,j}^0=v_{k,j}$
                \FOR{$i=1,\ldots,N_{k,j}$}
                \STATE \textbf{Sample:} Choose a random index $\o_{k,j}^{i}$
             and compute a subgradient
                $d_{k,j}^{i}$ of  $g^+_{\o_{k,j}^{i}}(z)$ at the point $z=z_{k,j}^{i-1}$ 
                \STATE \textbf{Update:} $z_{k,j}^i
   = \Pi_{Y_j} \left[z_{k,j}^{i-1} - \beta\, \frac{g^+_{\o_{k,j}^{i}}(z_{k,j}^{i-1})}{\|d_{k,j}^{i}\|^2}\, d_{k,j}^{i}\right]$
                \ENDFOR
                \STATE \textbf{Output} $x_{k,j}=z_{k,j}^{N_{k,j}}$ .
		\end{algorithmic}
\end{algorithm}	
Algorithm \ref{algo_proj_steps} performs feasibility updates for the functional constraints defining the set $X_j$. We will combine it with  the projection method (Algorithm~\ref{algo_proj_method}), Korpelevich method (Algorithm~\ref{algo_Kor_method}), and Popov method (Algorithm~\ref{algo_Popov_method}), which provide updates using the mapping $F$. The inputs to 
Algorithm \ref{algo_proj_steps} are the iterate $v_{k,j}$ (coming from Algorithms \ref{algo_proj_method}, \ref{algo_Kor_method}, or \ref{algo_Popov_method}) and a (deterministic) time varying batch size $N_{k,j} \geq 1$ at time $k$ for agent $j$. The iterate $x_{k,j}$ is the output  of Algorithm~\ref{algo_proj_steps}. 

With a deterministic step size $\beta\in(0,2)$, the Algorithm~\ref{algo_proj_steps} takes $N_{k,j}$ feasibility steps, each of which is random i.e.,
 the index variable $\o_{k,j}^{i} \in \A_j$ is random for all $i=1,\ldots, N_{k,j}$.
 The algorithm uses
 $d_{k,j}^{i}\in\partial g^+_{\o_{k,j}^{i}}(z_{k,j}^{i-1})$ if $g^+_{\o_{k,j}^{i}}(z_{k,j}^{i-1})>0$ and, otherwise, it can use $d_{k,j}^{i}=d$
 for some fixed vector $d\ne0$ (if $g^+_{\o_{k,j}^{i}}(z_{k,j}^{i-1})=0)$. We note that the choice of the vector $d\ne0$ is nonessential, since
 $z_{k,j}^{i}=z_{k,j}^{i-1}$ for any $d\ne0$ due to
 $g^+_{\o_{k,j}^{i}}(z_{k,j}^{i-1})=0$. 
 By Assumption \ref{asum_closed_set}, $\partial g_{\omega_{k,j}^i}^+(x_j)\ne\emptyset$ for all $x_j\in \re^{n_j}$, $\omega_{k,j}^i\in\A_j$, $k \geq 1$, $i = 1, \ldots, N_{k,j}$, and $j = 1, \ldots, J$. Therefore, the feasibility updates are well defined. We let $x_{0,j}\in Y_j$ be a randomly selected starting point (to be used in Algorithms~\ref{algo_proj_method}, \ref{algo_Kor_method}, and \ref{algo_Popov_method}) for agent $j$ such that $\EXP{\|x_{0,j}\|^2}<\infty$. 
Our next assumption deals with the random variables $\o_{k,j}^i$ similar to \cite{nedic2019random,necoara2022stochastic}.
%
%
\begin{assumption}\label{asum-regularmod} 
There exists a constant $c>0$ such that for all $j\in \cJ$, 
 \[\dist^2(x_j,S_j)\le c\,\EXP{(g^+_{\o_{k,j}^i}(x_j))^2}  \quad\hbox{for all } x_j \in Y_j,\quad i=1,\ldots, N_{k,j}. \]
 \end{assumption}
Regarding the sampling of the random indices $\o_{k,j}^i$, Assumption~\ref{asum-regularmod} allows for independent and identically distributed sampling, as well as sampling with distribution conditional on the past samples drawn at iteration $k$. 
The constant $c$ in Assumption~\ref{asum-regularmod} can be seen to exists, for example, if the index set $\A_j$ is bounded and
there is a global Hoffman's error bound for the function
\[f_j(x_j)=\sup_{a_{j}\in\A_j} g_{a_j}(x_j) +\delta_{Y_j}(x_j),\]
where $\delta_C$ is the characteristic function of the set $C$.
A function $f(z)$ has a a global Hoffman's error bound when there exists a constant $\gamma>0$ such that
\[\dist(z,[f\le0])\le \gamma f^+(z)\qquad\hbox{for all }z,\]
where $[f\le0]$ is the lower-level set of $f$ associated with the zero value. When the index set $\A_j$ is finite, the set $Y_j$ is polyhedral, and each $g_{\a_j}$ is a linear function, then such an error bound exists
\cite[Lemma 3]{hoffman2003approximate}, \cite[Theorem 3.1]{li2013global}. Also, such a bound exists for quadratic and more general convex functions under a Slater condition \cite{luo1994extension,lewis1998error,li2013global}, as well as for infinitely many linear constraints~\cite{hu1989approximate}. Whenever a global Hoffman's error bound exists, Assumption~\ref{asum-regularmod}
holds with a constant $c$ that depends on the error bound $\gamma$ and the distribution used for random constraint sampling, assuming that the support of the distribution coincides with the index set.

In the sequel, we use the sigma algebra $\mathcal{F}_{k,j}$ per agent $j$ and their union $\widetilde \cF_{k}$ over agents, defined as follows:
  \begin{align} 
      \mathcal{F}_{k,j} = \{x_{0,j}\} \cup \{\omega_{t,j}^{i} \mid 1 \leq t \leq k, 1 \leq i \leq N_{k,j}\} , \qquad \widetilde \cF_{k} = \bigcup_{j=1}^J \mathcal{F}_{k,j} . \nonumber
  \end{align}
Also, we will use the quantity $q\in(0,1]$, defined as follows:
    \begin{align}\label{quantity-q}
    q=\frac{\b(2-\beta)}{c M_g^2 }.
    \end{align}
We next present a basic relation for the iterates of Algorithm~\ref{algo_proj_steps}.

\begin{theorem}\label{thm_inf_updates}
    Under Assumptions \ref{asum_closed_set}, \ref{asum-subgrad-bound}, and \ref{asum-regularmod}, for 
   the iterates $x_{k,j}$ obtained via Algorithm \ref{algo_proj_steps} we have 
    for all $x_j \in S_j = X_j \cap Y_j$ and  $j \in \{1,\ldots, J\}$,
    \[\|x_{k,j} - x_j\|^2 \le \|v_{k,j} - x_j\|^2
       - \frac{\b(2-\beta)}{M_g^2} \,\sum_{i=1}^{N_{k,j}} (g_{\o_{k,j}^{i}}^+(z_{k,j}^{i-1}))^2,\]
       and almost surely
    \begin{align}
        \EXP{\|x_{k,j}-x_j\|^2 \!\mid \! \cF_{k-1,j}} \! \leq \! \|v_{k,j}-x_j\|^2 \!-\! \left( (1-q)^{-N_{k,j}} \!-\! 1  \right) \EXP{\dist^2(x_{k,j},S_j) \mid  \cF_{k-1,j}} , \nonumber
    \end{align}
    where $0< \b < 2$, $M_g$ is the bound on subgradient norms from Assumption~\ref{asum-subgrad-bound}, and $q$ as given in~\eqref{quantity-q}.
\end{theorem}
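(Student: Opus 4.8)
The plan is to prove the two inequalities in turn: the first by a direct telescoping of the one-step estimate from Lemma~\ref{lemma:basiter}, and the second by upgrading the first into a conditional-expectation recursion and exploiting a geometric contraction of the distances to $S_j$.

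For the deterministic inequality, I would apply Lemma~\ref{lemma:basiter} at each inner step $i$ with $Z=Y_j$, $h=g_{\o_{k,j}^i}$, $z=z_{k,j}^{i-1}$, $d=d_{k,j}^i$, and $\hat z=z_{k,j}^i$. Any fixed $x_j\in S_j$ is admissible as the reference point $\bar z$, since $x_j\in X_{\o_{k,j}^i}$ forces $g^+_{\o_{k,j}^i}(x_j)=0$. This yields
\[\|z_{k,j}^i-x_j\|^2\le\|z_{k,j}^{i-1}-x_j\|^2-\b(2-\b)\,\frac{(g^+_{\o_{k,j}^i}(z_{k,j}^{i-1}))^2}{\|d_{k,j}^i\|^2}.\]
Since $z_{k,j}^{i-1}\in Y_j$ (it is a projection onto $Y_j$, as is $z_{k,j}^0=v_{k,j}$), whenever $g^+_{\o_{k,j}^i}(z_{k,j}^{i-1})>0$ one has $d_{k,j}^i\in\partial g_{\o_{k,j}^i}(z_{k,j}^{i-1})$, so $\|d_{k,j}^i\|\le M_g$ by Assumption~\ref{asum-subgrad-bound}; when $g^+_{\o_{k,j}^i}(z_{k,j}^{i-1})=0$ the last term vanishes. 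In either case the step bound holds with $\|d_{k,j}^i\|^2$ replaced by $M_g^2$, and summing over $i=1,\dots,N_{k,j}$ telescopes to the first claim.

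For the second inequality, I would first take the conditional expectation of the one-step bound given the inner history $\mathcal{G}_{k,j}^{i-1}=\sigma(\cF_{k-1,j},\o_{k,j}^1,\dots,\o_{k,j}^{i-1})$, so that $z_{k,j}^{i-1}$ is measurable while $\o_{k,j}^i$ supplies the fresh randomness; invoking Assumption~\ref{asum-regularmod} in its conditional form then gives, for every fixed $x_j\in S_j$,
\[\EXP{\|z_{k,j}^i-x_j\|^2\mid\mathcal{G}_{k,j}^{i-1}}\le\|z_{k,j}^{i-1}-x_j\|^2-q\,\dist^2(z_{k,j}^{i-1},S_j).\]
The crucial observation is that the same estimate, applied with the $\mathcal{G}_{k,j}^{i-1}$-measurable choice $x_j=\Pi_{S_j}[z_{k,j}^{i-1}]$ together with $\|z_{k,j}^i-x_j\|^2\ge\dist^2(z_{k,j}^i,S_j)$, produces a geometric recursion for the distances, $\EXP{\dist^2(z_{k,j}^i,S_j)\mid\mathcal{G}_{k,j}^{i-1}}\le(1-q)\,\dist^2(z_{k,j}^{i-1},S_j)$. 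Writing $a_i=\EXP{\|z_{k,j}^i-x_j\|^2\mid\cF_{k-1,j}}$ and $\delta_i=\EXP{\dist^2(z_{k,j}^i,S_j)\mid\cF_{k-1,j}}$ and using the tower property, I get $a_i\le a_{i-1}-q\,\delta_{i-1}$ and $\delta_i\le(1-q)\delta_{i-1}$, the latter yielding $\delta_{i-1}\ge(1-q)^{-(N_{k,j}-i+1)}\delta_{N_{k,j}}$.

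The final step is to sum $a_i\le a_{i-1}-q\,\delta_{i-1}$ over $i=1,\dots,N_{k,j}$ and substitute this lower bound on $\delta_{i-1}$, which converts the sum into a geometric series:
\[a_{N_{k,j}}\le a_0-q\,\delta_{N_{k,j}}\sum_{m=1}^{N_{k,j}}(1-q)^{-m}=a_0-\big((1-q)^{-N_{k,j}}-1\big)\delta_{N_{k,j}},\]
using $\sum_{m=1}^{N}(1-q)^{-m}=\tfrac1q((1-q)^{-N}-1)$. Recalling $a_0=\|v_{k,j}-x_j\|^2$, $a_{N_{k,j}}=\EXP{\|x_{k,j}-x_j\|^2\mid\cF_{k-1,j}}$, and $\delta_{N_{k,j}}=\EXP{\dist^2(x_{k,j},S_j)\mid\cF_{k-1,j}}$ gives the second claim. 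I expect the main obstacle to be exactly this geometric bookkeeping: naive telescoping only delivers the weaker factor $qN_{k,j}\le(1-q)^{-N_{k,j}}-1$, and recovering the sharp exponential factor hinges on recognizing that the distances to $S_j$ contract by $(1-q)$ per step and using this to dominate every $\delta_{i-1}$ by $\delta_{N_{k,j}}$ before summing. (The computation tacitly uses $q<1$; the boundary case $q=1$ forces $\delta_i=0$ for $i\ge1$ and must be treated separately.)
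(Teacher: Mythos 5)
Your proposal is correct and follows essentially the same route as the paper: the one-step Polyak estimate from Lemma~\ref{lemma:basiter} with $\|d_{k,j}^i\|\le M_g$ telescoped for the first inequality, and for the second the per-step contraction $\EXP{\dist^2(z_{k,j}^i,S_j)\mid\cdot}\le(1-q)\,\EXP{\dist^2(z_{k,j}^{i-1},S_j)\mid\cdot}$ obtained from Assumption~\ref{asum-regularmod}, used to dominate each intermediate distance by $(1-q)^{-(N_{k,j}-i+1)}$ times the final one and summed as a geometric series to produce the factor $(1-q)^{-N_{k,j}}-1$. The only cosmetic difference is that you run the recursion through the inner filtration $\mathcal{G}_{k,j}^{i-1}$ and absorb the $(g^+)^2$ terms into $q\,\dist^2$ immediately, whereas the paper conditions on $\cF_{k-1,j}$ throughout and keeps the $(g^+_{\o_{k,j}^i}(z_{k,j}^{i-1}))^2$ sum explicit until the last step; your parenthetical handling of the degenerate case $q=1$ matches the paper's remark as well.
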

\begin{proof}
We use the definition of $z_{k,j}^i$ in Algorithm \ref{algo_proj_steps} and Lemma~\ref{lemma:basiter}, with $Z=Y_j$. Thus, for all $x_j \in S_j = X_j \cap Y_j$, we obtain
for all  $i=1,\ldots,N_{k,j}$,
    \begin{align}\label{eq_infes}
        \|z_{k,j}^i - x_j\|^2 
        &\le  \|z_{k,j}^{i-1} - x_j\|^2
       -\b(2-\beta)\,\frac{(g_{\o_{k,j}^{i}}^+(z_{k,j}^{i-1}))^2}{\|d_{k,j}^{i}\|^2}\cr
       &\le \|z_{k,j}^{i-1} - x_j\|^2
       -\frac{\b(2-\beta)}{M_g^2}\,(g_{\o_{k,j}^{i}}^+(z_{k,j}^{i-1}))^2,
    \end{align}
    where the last inequality is implied by $\|d_{k,j}^i\|^2 \leq M_g^2$
    for all $k$ and $j$ (Assumption~\ref{asum-subgrad-bound}).
       By summing the preceding relations over $i=1,\ldots,N_{k,j}$, 
       and by using $z^0_{k,j}=v_{k,j}$ and $z^{N_{k,j}}_{k,j}=x_{k,j}$, 
\begin{align}
        \|x_{k,j} - x_j\|^2 \le \|v_{k,j} - x_j\|^2
       - \frac{\b(2-\beta)}{M_g^2} \,\sum_{i=1}^{N_{k,j}} (g_{\o_{k,j}^{i}}^+(z_{k,j}^{i-1}))^2, \label{x_bound}
    \end{align}
    which implies the first stated relation upon dropping the last nonpositive term.
Next, taking minimum with respect to $x_j \in S_j$ on both the sides of~\eqref{eq_infes} gives us 
\begin{align*}
    \dist^2(z_{k,j}^i,S_j) \leq \dist^2(z_{k,j}^{i-1},S_j) - \frac{\b(2-\beta)}{M_g^2} (g_{\o_{k,j}^{i}}^+(z_{k,j}^{i-1}))^2 \quad \forall i=1,\ldots,N_{k,j}, \; j \in \cJ.
\end{align*}
Taking the conditional expectation in the preceding relation, given the past sigma algebra $\cF_{k-1,j}$, we obtain almost surely for all $i=1,\ldots,N_{k,j}$,
\begin{align}
    \EXP{\dist^2(z_{k,j}^i,S_j) \mid \cF_{k-1,j}} \leq \;&\EXP{\dist^2(z_{k,j}^{i-1},S_j) \mid \cF_{k-1,j}} \nonumber\\
    & - \frac{\b(2 - \beta )}{M_g^2} \EXP{(g_{\o_{k,j}^{i}}^+ (z_{k,j}^{i-1}))^2 \mid \cF_{k-1,j}} . \label{eq_infes2}
\end{align}
By using Assumption~\ref{asum-regularmod} for the last term in the preceding relation, we obtain almost surely for all $i=1,\ldots,N_{k,j}$, $j\in\cJ$,
     \begin{align}
         \EXP{(g_{\o_{k,j}^{i}}^+(z_{k,j}^{i-1}))^2 \mid \cF_{k-1,j}} &= \EXP{\EXP{(g_{\o_{k,j}^{i}}^+(z_{k,j}^{i-1}))^2 \mid \cF_{k-1,j},z_{k,j}^{i-1}}} \nonumber\\
         &\geq  \frac{1}{c} \EXP{\dist^2(z_{k,j}^{i-1},S_j) \mid \cF_{k-1,j}}. \label{eq-startexpest}
     \end{align}
Combining~\eqref{eq-startexpest} with \eqref{eq_infes2}, we can see that almost surely for all $i=1, \ldots, N_{k,j}$,
\begin{align}
    \EXP{\dist^2(z_{k,j}^i,S_j) \mid \cF_{k-1,j}} \leq \left(1 - \frac{\b(2-\beta)}{c M_g^2}\right) \EXP{\dist^2(z_{k,j}^{i-1},S_j) \mid \cF_{k-1,j}}, 
\end{align} 
where
$q$ is  given by~\eqref{quantity-q},
which satisfies $q\ge0$ since $\b\in(0,2)$. Note that the preceding relation implies that we must have $1-q\ge0$. If $q=1$, it would follow that 
$ \EXP{\dist^2(z_{k,j}^i,S_j) \mid \cF_{k-1,j}} =0$ for all $i=1,\ldots,N_{k,j}$,
which would imply that $\dist^2(z_{k,j}^i,S_j)=0$ almost surely for all $i=1,\ldots,N_{k,j}$, which is highly unlikely case. Thus, without loss of generality, we assume that  $q<1$.
Using the definition of $x_{k,j}$, i.e., $x_{k,j}=z_{k,j}^{N_{k,j}}$,
      we have almost surely for all $i=1,\ldots,N_{k,j}$,
      \[\EXP{\dist^2(x_{k,j},S_j)\mid \cF_{k-1,j}}\le
      (1-q)^{N_{k,j}-i+1} \EXP{\dist^2(z_{k,j}^{i-1},S_j) \mid \cF_{k-1,j}} \qquad  {a.s.}\]
     Using~\eqref{eq-startexpest} in the preceding relation, we obtain
     for all $i=1,\ldots,N_{k,j}$,
     \[ \EXP{g_{\o_{k,j}^{i}}^+(z_{k,j}^{i-1}))^2 \mid \cF_{k-1,j}}
     \ge \frac{1}{c}\frac{1}{(1-q)^{N_{k,j}-i+1}}\EXP{\dist^2(x_{k,j},S_j) \mid \cF_{k-1,j}} \qquad  {a.s}.\]
     Summing over $i=1,\ldots,N_{k,j}$, and simplifying $\sum\limits_{i=1}^{N_{k,j}} \frac{1}{(1-q)^{N_{k,j}-i+1}} = \frac{1- (1-q)^{N_{k,j}}}{q(1-q)^{N_{k,j}}}$, we get
     \begin{align}
         \sum_{i=1}^{N_{k,j}} \EXP{g_{\o_{k,j}^{i}}^+(z_{k,j}^{i-1}))^2 \mid \cF_{k-1,j}}
     \ge \frac{1}{c}\frac{\left(1- (1-q)^{N_{k,j}}\right)}{q(1-q)^{N_{k,j}}}  \EXP{\dist^2(x_{k,j},S_j) \mid \cF_{k-1,j}} \;\; a.s. \nonumber
     \end{align}
     Since $q=\frac{\b(2-\beta)}{c M_g^2 }$,  we multiply by $\frac{\b(2-\beta)}{M_g^2 }$ both sides of the above inequality and simplify the relation to obtain almost surely
     \begin{align}
     \frac{\b(2-\beta)}{M_g^2} \! \sum_{i=1}^{N_{k,j}} \EXP{(g_{\o_{k,j}^{i}}^+(z_{k,j}^{i-1}))^2 \!\mid \! \cF_{k-1,j}}
     & \!\ge\! \left( (1\!-\!q)^{-{N_{k,j}}} - 1\right)  \EXP{\dist^2(x_{k,j},S_j) \!\mid \! \cF_{k-1,j}}. \nonumber
     \end{align}
 Now taking conditional expectation on both sides of \eqref{x_bound} with respect to $\cF_{k-1,j}$, and then using the preceding relation for the last term on the right hand side of~\eqref{x_bound}, we obtain the relation of Theorem \ref{thm_inf_updates}.
\end{proof}
%
%
%
%

Next, we present a lemma that shows a bound of the expected distance of the iterate $x_{k,j}$ from its projection on the set $S_j$ corresponding to each agent $j \in \cJ$.
\begin{lemma}\label{lem_inf_geom}
Under Assumptions \ref{asum_closed_set}, \ref{asum-subgrad-bound}, and \ref{asum-regularmod}, for 
   the iterate $x_{k,j}$ obtained by Algorithm \ref{algo_proj_steps} we have almost surely
    for all agents  $j \in \{1,\ldots, J\}$,
    \begin{align*}
        &\EXP{\dist^2(x_{k,j},S_j) \mid \cF_{k-1,j}} \leq (1-q)^{N_{k,j}} {\|v_{k,j} - x_j^*\|^2},
    \end{align*}
    where $x^*=(x_1^*,\ldots, x_J^*)$ is the solution of VI$(S, F)$ and $q$ is given in \eqref{quantity-q}.
\end{lemma}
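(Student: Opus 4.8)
The plan is to reuse the one-step geometric contraction on the expected squared distance to $S_j$ that was already derived inside the proof of Theorem~\ref{thm_inf_updates}. There, combining Assumption~\ref{asum-regularmod} with the per-step inequality~\eqref{eq_infes2}, one obtains almost surely for every $i = 1, \ldots, N_{k,j}$,
\[\EXP{\dist^2(z_{k,j}^i,S_j) \mid \cF_{k-1,j}} \leq (1-q)\,\EXP{\dist^2(z_{k,j}^{i-1},S_j) \mid \cF_{k-1,j}},\]
with $q$ as in~\eqref{quantity-q}. So the lemma reduces to unrolling this recursion; no new estimates are required.

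First I would apply the recursion repeatedly, from $i = N_{k,j}$ descending to $i = 1$. Since $x_{k,j} = z_{k,j}^{N_{k,j}}$ by the definition in Algorithm~\ref{algo_proj_steps}, this chaining produces
\[\EXP{\dist^2(x_{k,j},S_j) \mid \cF_{k-1,j}} \leq (1-q)^{N_{k,j}}\,\EXP{\dist^2(z_{k,j}^{0},S_j) \mid \cF_{k-1,j}} \qquad {a.s.}\]
Here I would also recall, as argued in the proof of Theorem~\ref{thm_inf_updates}, that $0 \le 1-q < 1$, so the factor $(1-q)^{N_{k,j}}$ is well behaved.

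Next I would simplify the right-hand side using $z_{k,j}^{0} = v_{k,j}$. The input $v_{k,j}$ to Algorithm~\ref{algo_proj_steps} is produced by the outer method (Algorithm~\ref{algo_proj_method}, \ref{algo_Kor_method}, or~\ref{algo_Popov_method}) from quantities generated through iteration $k-1$, hence it is $\cF_{k-1,j}$-measurable; therefore the conditional expectation of $\dist^2(z_{k,j}^{0},S_j)$ collapses to the deterministic quantity $\dist^2(v_{k,j},S_j)$. Finally, since $x_j^* \in S_j$ is a feasible competitor in the minimization defining the distance, I would bound $\dist^2(v_{k,j},S_j) \leq \|v_{k,j} - x_j^*\|^2$, which yields the claimed inequality.

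The only delicate point is this measurability of $v_{k,j}$ with respect to $\cF_{k-1,j}$, which is precisely what lets the conditional expectation of $\dist^2(z_{k,j}^{0},S_j)$ reduce to $\dist^2(v_{k,j},S_j)$; everything else is a routine unrolling of a geometric contraction already in hand, together with the trivial relaxation of the distance to the specific feasible point $x_j^*$.
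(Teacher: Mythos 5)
Your proof is correct, but it takes a genuinely different route from the paper's. The paper proves the lemma by invoking the \emph{stated conclusion} of Theorem~\ref{thm_inf_updates} at the particular point $x_j = x_j^*$, then bounding the left-hand side below via $\dist^2(x_{k,j},S_j) \le \|x_{k,j}-x_j^*\|^2$ and rearranging so that the $\EXP{\dist^2(x_{k,j},S_j)\mid\cF_{k-1,j}}$ terms collect with the coefficient $(1-q)^{-N_{k,j}}$. You instead reach back into the \emph{proof} of that theorem, take the one-step contraction $\EXP{\dist^2(z_{k,j}^i,S_j)\mid\cF_{k-1,j}} \le (1-q)\EXP{\dist^2(z_{k,j}^{i-1},S_j)\mid\cF_{k-1,j}}$, unroll it over $i=1,\ldots,N_{k,j}$, and finish with $\dist^2(v_{k,j},S_j)\le\|v_{k,j}-x_j^*\|^2$. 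Your route is arguably more direct and in fact yields the slightly sharper intermediate estimate $(1-q)^{N_{k,j}}\dist^2(v_{k,j},S_j)$ before relaxing to the specific feasible point $x_j^*$; the paper's route has the virtue of relying only on the theorem as stated rather than on an inequality internal to its proof. On the measurability point you flag: strictly, $v_{k,j}$ depends on the full stacked iterate $x_{k-1}$ through $F(x_{k-1})$ and is therefore $\widetilde\cF_{k-1}$-measurable rather than $\cF_{k-1,j}$-measurable; the paper commits the same informality (its Theorem~\ref{thm_inf_updates} already treats $\|v_{k,j}-x_j\|^2$ as known given $\cF_{k-1,j}$), so your argument is consistent with the paper's conventions, but a fully rigorous version of either proof would condition on $\widetilde\cF_{k-1}$.
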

\begin{proof}
By letting $x_j = x_j^* \in X_j \cap Y_j$ in Theorem \ref{thm_inf_updates} for the agent $j$, where $x^*$ is the solution to the VI problem in~\eqref{VI_problem}, we obtain almost surely 
\begin{align*}
\EXP{\|x_{k,j} - x_j^*\|^2 \mid \cF_{k-1,j}}  
&\le  \|v_{k,j} - x_j^*\|^2 \cr
&\ \ - \left( (1-q)^{-{N_{k,j}}}-1\right)\EXP{\dist^2(x_{k,j},S_j) \mid \cF_{k-1,j}}. \label{eq_dist1}
\end{align*}
Using the relation $\EXP{\dist^2(x_{k,j},S_j) \mid \cF_{k-1,j}} \leq \EXP{\|x_{k,j} - x_j^*\|^2 \mid \cF_{k-1,j}}$ a.s.\ in the preceding inequality and grouping together the terms of $\EXP{\dist^2(x_{k,j},S_j) \mid \cF_{k-1,j}}$ on the left hand side, we obtain
    \begin{align}
        (1-q)^{-N_{k,j}} \EXP{\dist^2(x_{k,j},S_j) \mid \cF_{k-1,j}} \leq \|v_{k,j} - x_j^*\|^2 \qquad  {a.s}.,
    \end{align}
 \end{proof}

    We note that by using Lemma~\ref{lem_inf_geom}
and Jensen's inequality
    \begin{align*}
        \EXP{\dist(x_{k,j},S_j) \!\mid \! \cF_{k-1,j}} 
        & \!=\! \EXP{\sqrt{\dist^2(x_{k,j},S_j)} \!\mid \! \cF_{k-1,j}}\cr 
        &\!\leq \! \sqrt{\EXP{\dist^2(x_{k,j},S_j) \!\mid \! \cF_{k-1,j}}},
    \end{align*}
    we can also obtain that almost surely
    for all $j\in\mathcal{J},$
    \[\EXP{\dist(x_{k,j},S_j) \mid \cF_{k-1,j}} \leq (1-q)^{\frac{N_{k,j}}{2}} \|v_{k,j} - x_j^*\|.\]

%
%
%
To consider the relation for all the agents together, we concatenate the vectors 
$v_{k,j}, x_{k,j}$ across all the agents to define
$v_k$ and $x_k$, defined as follows
\begin{align}
    v_k= [v_{k,1}, \ldots, v_{k,J}]^T , \qquad
    x_k= [x_{k,1}, \ldots, x_{k,J}]^T, \label{stacked_vector}
\end{align}
and sum the relations in Theorem~\ref{thm_inf_updates} and  Lemma~\ref{lem_inf_geom} for all $j=1,\cdots,J$ to obtain
\begin{align}
    \EXP{\|x_{k} - x^*\|^2 \mid \widetilde \cF_{k-1}}  \le&  {\|v_{k} - x^*\|^2} \nonumber\\
       &- \sum_{j=1}^J \left( (1-q)^{-{N_{k,j}}}-1\right)\EXP{\dist^2(x_{k,j},S_j) \mid \cF_{k-1,j}} \quad a.s. ,  \label{dist_eq_opt} \\
    &\EXP{\dist^2(x_{k},S) \mid \widetilde \cF_{k-1}} \leq \sum_{j=1}^J (1-q)^{N_{k,j}} {\|v_{k,j} - x_j^*\|^2} \quad a.s. \label{dist_eq_geom}
\end{align}

We next use Algorithm~\ref{algo_proj_steps} to develop several VI algorithms to solve~\eqref{VI_problem}.


\section{Modified Projection Method}\label{sec:proj_method}
The modified projection method (Algorithm \ref{algo_proj_method}), at iteration $k$, updates the point $x_{k-1,j}$ using the mapping $[F(x_{k-1})]_j$ to produce the iterate $v_{k,j}$ for all agents $j \in \cJ$. 
In a game setting,
the mapping block $[F(x_{k-1})]_j$ for agent $j$ corresponds to $\nabla_{x_j} \theta_j(x_{k-1})$ of the agent's cost function. After performing the update step, Algorithm \ref{algo_proj_steps} is called to reduce the infeasibility gap between $v_{k,j}$ and the set $X_j$, giving $x_{k,j}$ as the output at iteration $k$ for all $j \in \cJ$. The same step size $\a_k$ is used across all the agents, which satisfies the assumption below.
%
%
%
%
\begin{algorithm}
		\caption{Modified Projection Method (For Agent $j$)}
		\label{algo_proj_method}
		\begin{algorithmic}[1]
			\REQUIRE{$x_{0,j}$, $\a_k$}
                \FOR{$k=1,\ldots$}
                \STATE \textbf{Update:} $v_{k,j} = \Pi_{Y_j} \left[x_{k-1,j} - \a_{k-1} [F(x_{k-1})]_j\right]$
                \STATE \textbf{Call Algorithm \ref{algo_proj_steps}:} Pass $v_{k,j}$ and $N_{k,j}$ and get $x_{k,j}$
                \ENDFOR
		\end{algorithmic}
\end{algorithm}	
Using the stacked vector notation (see \eqref{stacked_vector}),
we provide a relation for the distance $\|v_{k+1} - x^*\|^2$ at iteration $k+1$ from the solution $x^*$ of the VI$(S, F)$. 
\begin{lemma}\label{lem_proj}
    Under Assumptions~\ref{asum_closed_set},~\ref{asum_lipschitz}, and~\ref{asum_strong_monotone}, 
    and assuming that the step size $\alpha_k$ satisfies $0<\a_k\le \mu/(2L^2)$,
    the following holds  almost surely for the iterates $v_{k+1},x_k \in Y$ generated by Algorithm~\ref{algo_proj_method} and the solution $x^*$,
    \begin{align*}
&\EXP{\| v_{k+1} - x^* \|^2 \mid \widetilde \cF_{k-1}} 
\leq  ( 1 - \a_k \mu) \| v_{k} - x^* \|^2 + \a_k^2 \left(2 + \frac{1}{\zeta_k} \right) \| F(x^*) \|^2 \nonumber\\
        & \ \ - \sum_{j=1}^J \left[( 1 - \a_k \mu)((1-q)^{-N_{k,j}}-1) - \zeta_k \right] \EXP{\dist^2(x_{k,j},S_j) \mid \cF_{k-1,j}}, 
\end{align*}
    where $\zeta_k > 0$ is a scalar and the constant $q$ is given by \eqref{quantity-q}.
\end{lemma}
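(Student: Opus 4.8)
The plan is to first derive a pathwise (per-realization) recursion that bounds $\|v_{k+1}-x^*\|^2$ in terms of $\|x_k-x^*\|^2$ and $\dist^2(x_k,S)$, and then to take the conditional expectation given $\widetilde\cF_{k-1}$ and substitute the feasibility-update estimate~\eqref{dist_eq_opt}. I would work throughout with the stacked notation, noting that since $Y=\prod_j Y_j$ the per-agent update in Algorithm~\ref{algo_proj_method} concatenates into $v_{k+1}=\Pi_Y[x_k-\a_k F(x_k)]$, and that $\dist^2(x_k,S)=\sum_{j=1}^J\dist^2(x_{k,j},S_j)$.

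First I would use that $x^*\in S\subseteq Y$, so $\Pi_Y[x^*]=x^*$, and invoke the nonexpansiveness of $\Pi_Y$ to get $\|v_{k+1}-x^*\|^2\le\|x_k-\a_k F(x_k)-x^*\|^2$. Expanding the square yields the cross term $-2\a_k\la F(x_k),x_k-x^*\ra$ and the quadratic term $\a_k^2\|F(x_k)\|^2$. For the cross term I would write $F(x_k)=(F(x_k)-F(x^*))+F(x^*)$ and apply strong monotonicity (Assumption~\ref{asum_strong_monotone}) to the first part; for the quadratic term I would use $\|F(x_k)\|^2\le 2\|F(x_k)-F(x^*)\|^2+2\|F(x^*)\|^2$ with Lipschitz continuity (Assumption~\ref{asum_lipschitz}) to bound it by $2L^2\|x_k-x^*\|^2+2\|F(x^*)\|^2$. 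Collecting the coefficients of $\|x_k-x^*\|^2$ gives $1-2\a_k\mu+2\a_k^2L^2$, which the step-size condition $\a_k\le\mu/(2L^2)$ reduces to $1-\a_k\mu$.

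The crux is the leftover term $-2\a_k\la F(x^*),x_k-x^*\ra$: since $x_k$ need not lie in $S$ (the feasibility updates only reduce, not eliminate, the infeasibility), the VI inequality~\eqref{VI_problem} cannot be applied to $x_k$ directly. My remedy is to insert $\widehat x_k:=\Pi_S[x_k]$ and split $\la F(x^*),x_k-x^*\ra=\la F(x^*),x_k-\widehat x_k\ra+\la F(x^*),\widehat x_k-x^*\ra$. The second inner product is nonnegative by~\eqref{VI_problem} (as $\widehat x_k\in S$) and drops out with the correct sign, while the first is bounded by $\|F(x^*)\|\,\dist(x_k,S)$ via Cauchy--Schwarz. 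A Young-type inequality $2\a_k\|F(x^*)\|\dist(x_k,S)\le(\a_k^2/\zeta_k)\|F(x^*)\|^2+\zeta_k\dist^2(x_k,S)$ then separates this into a $\|F(x^*)\|^2$ contribution and a $\dist^2(x_k,S)$ contribution, yielding the pathwise bound
\[
\|v_{k+1}-x^*\|^2\le(1-\a_k\mu)\|x_k-x^*\|^2+\a_k^2\Bigl(2+\tfrac{1}{\zeta_k}\Bigr)\|F(x^*)\|^2+\zeta_k\dist^2(x_k,S).
\]
I expect this splitting to be the main obstacle, since it is precisely where the infeasibility of $x_k$ must be quantified, and it is what forces the appearance of the $\zeta_k$-dependent terms.

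Finally I would take $\EXP{\cdot\mid\widetilde\cF_{k-1}}$ of the displayed bound. Here $v_k$ is $\widetilde\cF_{k-1}$-measurable, so $\|v_k-x^*\|^2$ is treated as a constant, whereas $\|x_k-x^*\|^2$ and $\dist^2(x_k,S)$ remain random. Substituting~\eqref{dist_eq_opt} for $\EXP{\|x_k-x^*\|^2\mid\widetilde\cF_{k-1}}$ and $\EXP{\dist^2(x_k,S)\mid\widetilde\cF_{k-1}}=\sum_{j=1}^J\EXP{\dist^2(x_{k,j},S_j)\mid\cF_{k-1,j}}$ for the distance term, and then grouping the $\EXP{\dist^2(x_{k,j},S_j)\mid\cF_{k-1,j}}$ terms, produces exactly the stated inequality with the coefficient $(1-\a_k\mu)((1-q)^{-N_{k,j}}-1)-\zeta_k$.
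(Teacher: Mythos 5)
Your proposal is correct and follows essentially the same route as the paper's own proof: nonexpansiveness of $\Pi_Y$, strong monotonicity plus Lipschitz continuity to get the $(1-\a_k\mu)$ contraction, the insertion of $\Pi_S[x_k]$ with the VI inequality and Young's inequality to handle $-2\a_k\la F(x^*),x_k-x^*\ra$, and finally conditional expectation combined with~\eqref{dist_eq_opt}. No gaps.
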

\begin{proof}
    From the definition of $v_{k+1} \in Y$ and using the non-expansiveness property of the projection operator, we can see
    \begin{align}
        \| v_{k+1} - x^* \|^2 &=  \| \Pi_{Y} [x_{k} - \alpha_k F(x_k)] - x^* \|^2 \leq \| (x_{k} - x^*) - \alpha_k F(x_k) \|^2 \nonumber \\
        &= \| x_{k} - x^* \|^2 - 2 \alpha_k \la F(x_k), x_k - x^* \ra + \alpha_k^2 \|F(x_k)\|^2 . \label{lem2_bound1}
    \end{align}
We use Assumptions~\ref{asum_strong_monotone} (strong monotonicity) and~\ref{asum_lipschitz} (Lipschitz continuity) to upper bound the second and third terms respectively on the right hand side of \eqref{lem2_bound1}:
\begin{align}
    & - 2 \alpha_k \la F(x_k), x_k - x^* \ra \leq - 2 \alpha_k \la F(x^*), x_k - x^* \ra - 2 \alpha_k \mu \| x_k - x^* \|^2 .\label{lem2_bound2} \\
    &\| F(x_k) \|^2 \leq 2\|F(x_k) - F(x^*) \|^2 + 2\|F(x^*) \|^2 \leq 2L^2 \| x_k - x^* \|^2 + 2 \|F(x^*) \|^2 . \label{lem2_bound5}
\end{align}
The first quantity on the right hand side of \eqref{lem2_bound2} can be upper estimated as
\begin{align}
    - 2 \alpha_k \la F(x^*), x_k - x^* \ra = 2 \alpha_k \la F(x^*), x^* - \Pi_S[x_k] \ra + 2 \alpha_k \la F(x^*), \Pi_S[x_k] - x_k \ra . \label{lem5_bound2}
\end{align}
The first term on the right hand side of \eqref{lem5_bound2} is non-positive since $x^*$ is the solution of the VI$(S,F)$ and the second term can be estimated using Young's inequality:
\begin{align}
    - 2 \alpha_k \la F(x^*), x_k - x^* \ra \leq \frac{\a_k^2}{\zeta_k} \| F(x^*) \|^2 + \zeta_k \dist^2(x_k,S) , \label{lem5_bound3}
\end{align}
where $\zeta_k>0$ is an arbitrary scalar. Using relations~\eqref{lem5_bound3} and ~\eqref{lem2_bound2}, and substituting the resulting relation back into~\eqref{lem2_bound1}, we obtain 
\begin{align}
        \| v_{k+1} - x^* \|^2 \!\leq\! ( 1 \!-\! 2\a_k \mu \!+\! 2 \a_k^2 L^2) \| x_k - x^* \|^2 \!+\! \left(2 \!+\! \frac{1}{\zeta_k} \right) \a_k^2 \| F(x^*) \|^2 \!+\! \zeta_k \dist^2(x_k,S).  \nonumber
    \end{align}
 For the step size $\a_k>0$ satisfying $\a_k\leq \mu/(2 L^2)$, we have that 
    \[1 - 2\a_k \mu + 2 \a_k^2 L^2 \leq 1 - \a_k \mu,\]
implying that
\begin{align}
    \| v_{k+1} - x^* \|^2 \leq ( 1 - \a_k \mu) \| x_k - x^* \|^2 + \left(2 \!+\! \frac{1}{\zeta_k} \right) \a_k^2 \| F(x^*) \|^2 + \zeta_k \dist^2(x_k,S) . \label{Proj_meth_eq2}
\end{align}
Taking the conditional expectation in the preceding relation, given $\widetilde \cF_{k-1}$, and using~\eqref{dist_eq_opt} to simplify the first term on the right hand side of the above expression gives us
\begin{align*}
&\EXP{\| v_{k+1} - x^* \|^2 \mid \widetilde \cF_{k-1}} 
\leq  ( 1 - \a_k \mu) \| v_{k} - x^* \|^2 + \a_k^2 \left(2 + \frac{1}{\zeta_k} \right) \| F(x^*) \|^2 \nonumber\\
        & \ \ - \sum_{j=1}^J \left[( 1 - \a_k \mu)((1-q)^{-N_{k,j}}-1) - \zeta_k \right] \EXP{\dist^2(x_{k,j},S_j) \mid \cF_{k-1,j}}  \quad a.s., 
\end{align*}
which is the stated relation. 
\end{proof}
We note that $\zeta_k$ in Lemma~\ref{lem_proj} is an artifact of the analysis and not a parameter of Algorithm~\ref{algo_proj_steps}. As seen from
Lemma~\ref{lem_proj}, we want to select $\zeta_k>0$ so that $( 1 - \a_k \mu)((1-q)^{-N_{k,j}}-1) \geq \zeta_k$, which gives
\begin{align}
    N_{k,j} \geq \frac{\log\left(\frac{ \zeta_k+1-\a_k\mu}{1-\a_k\mu}\right)}{\log\left(\frac{1}{1-q}\right)} = \log_{\frac{1}{1-q}}\left(\frac{ \zeta_k+1-\a_k\mu}{1-\a_k\mu}\right) \;, \quad \forall k\geq 1 \;, j \in \cJ. \label{sample_selection}
\end{align}
We want our algorithm to work for any batch size, i.e. $N_{k,j} \geq 1$, and hence we choose $\xi_k$ such that 
$\frac{ \zeta_k+1-\a_k\mu}{1-\a_k\mu} = \frac{1}{1-q}$, which gives $\zeta_k$ as
\begin{align}
    \zeta_k = \frac{q}{1-q}(1-\a_k \mu) , \label{zeta_selection}
\end{align}
which is positive for all $k \geq 1$ when $1-\a_k \mu > 0$, thus restricting the stepsize $\a_k < \frac{1}{\mu}$. 
This condition is satisfied when $\a_k \leq \frac{\mu}{2L^2}$ since $\frac{\mu}{2L^2} < \frac{1}{\mu}$. Hence, under the stepsize choice
as in Lemma~\ref{lem_proj} and using $\zeta_k$
as in~\eqref{zeta_selection}, the batch size satisfies $N_{k,j} \geq 1$ for all $k \geq 1$.


We next show almost sure convergence of the iterates $v_k \in Y$ and $x_k \in Y$ to the solution $x^* \in S$ and also almost sure convergence of the infeasibility gap $\dist(x_k,S)$ to $0$.
These results hold when the stepsize satisfies the following assumption.
\begin{assumption}\label{asum_step_size}
    The step size $\a_k>0$ satisfies $\sum_{k=0}^\infty \a_k = \infty$ and $\sum_{k=0}^\infty \a_k^2 < \infty$.
\end{assumption}


\begin{theorem}\label{Proj_meth_thm_as_conv}
    Under Assumptions \ref{asum_closed_set},~\ref{asum-subgrad-bound},~\ref{asum_lipschitz},~\ref{asum_strong_monotone},~\ref{asum-regularmod}, and \ref{asum_step_size}, with a step-size $\a_k \leq \frac{\mu}{2 L^2}$ for all $k$,  $\zeta_k$ as in~\eqref{zeta_selection}, and any choice of the batch sizes $N_{k,j}\ge 1$ for all $k$ and $j$, the following results hold for Algorithm \ref{algo_proj_method}:\\
    (i) the iterates $v_{k}, x_{k} \in Y$ converge to $x^*$ almost surely and in expectation,\\
    (ii) the distance $\dist(x_k,S)$ converges to $0$ almost surely and in expectation. 
\end{theorem}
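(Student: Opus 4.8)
The plan is to prove convergence in three stages: first show the ``master'' quantity $\|v_k-x^*\|^2$ vanishes via the near-supermartingale Lemma~\ref{lem_near_super_martingale}, then transfer this to $\|x_k-x^*\|^2$ and finally to $\dist^2(x_k,S)$ using the comparison Lemma~\ref{lem_near_super_martingale2}.

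First I would convert Lemma~\ref{lem_proj} into a clean recursion. With $\zeta_k=\frac{q}{1-q}(1-\a_k\mu)$ as in~\eqref{zeta_selection}, the discussion around~\eqref{sample_selection} guarantees that for every batch size $N_{k,j}\ge1$ the coefficient $(1-\a_k\mu)((1-q)^{-N_{k,j}}-1)-\zeta_k$ is nonnegative; since those terms appear with a minus sign in Lemma~\ref{lem_proj}, I can discard them to obtain, almost surely,
\begin{align*}
\EXP{\|v_{k+1}-x^*\|^2\mid\widetilde\cF_{k-1}}\le(1-\a_k\mu)\,\|v_k-x^*\|^2+b_k,\qquad b_k:=\a_k^2\Big(2+\tfrac{1}{\zeta_k}\Big)\|F(x^*)\|^2 .
\end{align*}
Setting $X_k=\|v_k-x^*\|^2$ and $\tl\g_k=\a_k\mu$, I then verify the hypotheses of Lemma~\ref{lem_near_super_martingale}. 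Combining Assumptions~\ref{asum_lipschitz} and~\ref{asum_strong_monotone} gives $\mu\le L$, so $\a_k\le\mu/(2L^2)$ yields $\tl\g_k=\a_k\mu\le\mu^2/(2L^2)\le\tfrac12<1$, while $\sum_k\tl\g_k=\mu\sum_k\a_k=\infty$ by Assumption~\ref{asum_step_size}. The same bound gives $1-\a_k\mu\ge\tfrac12$, hence $\zeta_k\ge\frac{q}{2(1-q)}>0$ stays bounded away from zero, so $2+1/\zeta_k\le C$ for a constant $C$ and $\sum_k b_k\le C\|F(x^*)\|^2\sum_k\a_k^2<\infty$; the required initial finiteness $\EXP{X_1}<\infty$ follows from $\EXP{\|x_{0,j}\|^2}<\infty$ and the Lipschitz bound on $F$. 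One small point: the conditioning in Lemma~\ref{lem_proj} is on $\widetilde\cF_{k-1}$ rather than on $X_0,\ldots,X_k$, but since $v_k$ (hence $X_k$) is $\widetilde\cF_{k-1}$-measurable, the tower property reduces the inequality to exactly the form required by Lemma~\ref{lem_near_super_martingale}. This lemma then yields $\|v_k-x^*\|^2\to0$ a.s.\ and $\EXP{\|v_k-x^*\|^2}\to0$, and Jensen's inequality also gives $\EXP{\|v_k-x^*\|}\to0$; thus $v_k\to x^*$ a.s.\ and in expectation.

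Next I would transfer these conclusions. Taking $x_j=x_j^*$ in the first (deterministic) relation of Theorem~\ref{thm_inf_updates} and summing over $j$ gives the pathwise bound $\|x_k-x^*\|^2\le\|v_k-x^*\|^2$; applying Lemma~\ref{lem_near_super_martingale2} with $Y_k=\|x_k-x^*\|^2$ and $Z_k=\|v_k-x^*\|^2$ shows $x_k\to x^*$ a.s.\ and in expectation, establishing~(i). For~(ii), since $x^*\in S$ we have $\dist^2(x_k,S)\le\|x_k-x^*\|^2$, so a second application of Lemma~\ref{lem_near_super_martingale2} (with $Z_k=\|x_k-x^*\|^2$, whose convergence was just shown) gives $\dist(x_k,S)\to0$ a.s.\ and in expectation. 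Alternatively, bounding $(1-q)^{N_{k,j}}\le1$ in~\eqref{dist_eq_geom} and taking total expectation recovers the expectation part directly.

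The hard part will be the two verifications that make the dropping and the lemma application legitimate: confirming the sign of the discarded distance-coefficient (which relies precisely on the choice of $\zeta_k$ together with $N_{k,j}\ge1$), and ensuring $\sum_k b_k<\infty$, which forces $\zeta_k$ (equivalently $1-\a_k\mu$) to stay bounded away from zero and is exactly where the step-size cap $\a_k\le\mu/(2L^2)$ and the inequality $\mu\le L$ are needed. The lagged-conditioning bookkeeping is routine but should be noted; everything else then follows mechanically from Lemmas~\ref{lem_near_super_martingale} and~\ref{lem_near_super_martingale2}.
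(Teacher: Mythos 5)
Your proposal is correct and follows essentially the same route as the paper's proof: drop the nonpositive distance terms via the choice of $\zeta_k$ in~\eqref{zeta_selection}, apply Lemma~\ref{lem_near_super_martingale} to $\|v_k-x^*\|^2$ with $\tilde\gamma_k=\a_k\mu$ after bounding $1/\zeta_k$ by a constant, and then transfer to $x_k$ and $\dist(x_k,S)$ via Theorem~\ref{thm_inf_updates} and Lemma~\ref{lem_near_super_martingale2}. Your explicit verifications of $\EXP{X_1}<\infty$ and of the filtration/tower-property bookkeeping are details the paper leaves implicit, but they do not change the argument.
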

\begin{proof}
We use Lemma~\ref{lem_proj}
    and select $\zeta_k$ as \eqref{zeta_selection} which ensures the last term in the relation of Lemma~\ref{lem_proj} stays nonpositive for any $N_{k,j} \geq 1$ for all $k,j$. Therefore, 
    \begin{align}
         \EXP{\| v_{k+1} - x^* \|^2 \mid \widetilde \cF_{k-1}} &\leq (1-\a_k \mu) \| v_{k} - x^* \|^2 + \a_k^2 \left(2 + \frac{1}{\zeta_k} \right) \| F(x^*) \|^2 .\label{Proj_meth_thm_eq3} 
    \end{align}
    Now, with the step size $\a_k \leq \frac{\mu}{2L^2}$, we obtain $1-\a_k \mu \geq 1-\frac{\mu^2}{2L^2}$ for all $k$. Hence, using the preceding relation and equation \eqref{zeta_selection}, we see
    \begin{align}
        \frac{1}{\zeta_k} \leq \frac{1-q}{q(1-\frac{\mu^2}{2L^2})} . \label{zeta_selection2}
    \end{align}
    Therefore, using Assumption~\ref{asum_step_size} and the relation \eqref{zeta_selection2}, we obtain $\sum_{k=0}^\infty \a_k^2/\zeta_k < \infty$. By using Lemma~\ref{lem_near_super_martingale}, with the identification $\Tilde{\gamma}_k = \a_k \mu$, $X_k = \|v_k-x^*\|^2$, and $b_k = \left(2 + \frac{1}{\zeta_k} \right) \a_k^2 \| F(x^*) \|^2$, we conclude that 
    \begin{align}
        &\lim_{k \rightarrow \infty} \|v_k - x^*\|^2 = 0 \quad a.s., \quad  \quad \text{and} \quad \lim_{k \rightarrow \infty} \EXP{\|v_k - x^*\|^2} = 0.\label{mart_proj1}
    \end{align}
    Using Jensen's inequality, we have $\EXP{\|v_k - x^*\|} = \EXP{\sqrt{\|v_k - x^*\|^2}} \leq \sqrt{\EXP{\|v_k - x^*\|^2}}$. So using the preceding relation and the second expression of \eqref{mart_proj1}, we obtain $\lim_{k \rightarrow \infty} \EXP{\|v_k - x^*\|} = 0$.
    From Theorem~\ref{thm_inf_updates}, we can see that $\|x_k - x^* \| \leq \|v_k - x^* \|$ by summing both the sides of the first stated inequality for $j=1, \ldots, J$. Using this inequality and Lemma~\ref{lem_near_super_martingale2}, we can see that $\lim_{k \rightarrow \infty} \|x_k - x^* \| = 0$ a.s.\ and $\lim_{k \rightarrow \infty} \EXP{\|x_k - x^* \|} = 0$.
    Since $\dist(x_k,S) = \|x_k - \Pi_S[x_k]\| \leq \|x_k - x^*\|$ for all $k$. Hence, using Lemma~\ref{lem_near_super_martingale2} and the preceding relations, we conclude that    $ \lim_{k \rightarrow \infty} \dist(x_k,S) = 0$ a.s.\ and $\lim_{k \rightarrow \infty} \EXP{\dist(x_k,S)} = 0$.
\end{proof}
We next present the convergence rate results for Algorithm~\ref{algo_proj_method}.
\begin{theorem}\label{Proj_meth_thm_rate}
    Under the assumptions of Theorem~\ref{Proj_meth_thm_as_conv}
    and the step size satisfying $\a_k \leq \min \left(\frac{2}{\mu(k+1)}, \frac{\mu}{2 L^2} \right)$ for all $k$, the expected squared norm distances of $v_{k+1}$ and $x_{k+1}$ from the solution $x^*$ are bounded above by the order
    \begin{align}
        O \left(\frac{\kappa^4}{T^2} + \frac{1}{\mu^2 T} \|F(x^*)\|^2 \right) ,\nonumber
    \end{align}
    where $\kappa$ is the condition number (see~\eqref{eq-cnum}). Additionally, the expected distance of $x_{T+1}$ to the set $S$ diminishes geometrically fast with the sample size
    \begin{align}
        \EXP{\dist(x_{T+1},S)} &\leq \max_{j \in \cJ} (1-q)^{\frac{N_{T+1,j}}{2}} \times O \left(\frac{\kappa^2}{T} + \frac{1}{\mu \sqrt{T}} \|F(x^*)\| \right) , \nonumber
    \end{align}
    where $q$ is given by~\eqref{quantity-q}.
\end{theorem}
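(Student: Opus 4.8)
The plan is to turn the almost-sure recursion from the proof of Theorem~\ref{Proj_meth_thm_as_conv} into a deterministic scalar recursion and solve it with the prescribed two-phase step size. Taking total expectation in~\eqref{Proj_meth_thm_eq3} and writing $a_k := \EXP{\|v_k - x^*\|^2}$, I first bound the noise coefficient $2 + 1/\zeta_k$ by a constant using~\eqref{zeta_selection2}, obtaining
\[ a_{k+1} \leq (1-\a_k\mu)\,a_k + \a_k^2\,\tilde C\,\|F(x^*)\|^2, \]
where $\tilde C$ depends only on $q$ and $\kappa$. The rule $\a_k = \min\{2/(\mu(k+1)),\, \mu/(2L^2)\}$ has a crossover at $k_0 \approx 4\kappa^2$, since $2/(\mu(k+1)) = \mu/(2L^2)$ forces $k+1 = 4\kappa^2$: for $k < k_0$ the step equals the constant $\mu/(2L^2)$, giving $\a_k\mu = 1/(2\kappa^2)$, and for $k \geq k_0$ it equals $2/(\mu(k+1))$, giving $\a_k\mu = 2/(k+1)$.

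Next I would unroll the recursion to the initial term $\big(\prod_{k=0}^{T}(1-\a_k\mu)\big)a_0$ plus the accumulated noise $\sum_{k=0}^{T}\a_k^2\tilde C\|F(x^*)\|^2\prod_{i=k+1}^{T}(1-\a_i\mu)$. The key computation is the telescoping of the Phase-2 factors: for any $m \ge 1$ one has $\prod_{i=m}^{T}\big(1 - 2/(i+1)\big) = m(m-1)/((T+1)T)$. Applying this with $m=k_0$ yields $O(k_0^2/T^2) = O(\kappa^4/T^2)$, and since the Phase-1 product $\prod_{k=0}^{k_0-1}(1-1/(2\kappa^2))$ lies in $[0,1]$, the initial term is suppressed to $O(\kappa^4/T^2)$. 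For the noise I would use $\a_k^2 = 4/(\mu^2(k+1)^2)$ together with $\prod_{i=k+1}^{T}(1-2/(i+1)) = (k+1)k/((T+1)T)$, so the Phase-2 noise equals $\frac{4\tilde C\|F(x^*)\|^2}{\mu^2(T+1)T}\sum_{k=k_0}^{T}\frac{k}{k+1} = O\big(\|F(x^*)\|^2/(\mu^2 T)\big)$; the Phase-2 product also dominates each Phase-1 noise factor, so that contribution is absorbed into $O(\kappa^4/T^2)$. Combining gives the stated bound on $\EXP{\|v_{T+1}-x^*\|^2}$, and the identical bound for $\EXP{\|x_{T+1}-x^*\|^2}$ is immediate from $\|x_{T+1}-x^*\| \leq \|v_{T+1}-x^*\|$, obtained by summing the first relation of Theorem~\ref{thm_inf_updates} over $j$.

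For the distance to $S$, I would begin from the geometric bound~\eqref{dist_eq_geom} and factor out $\max_{j\in\cJ}(1-q)^{N_{T+1,j}}$ (valid since $N \mapsto (1-q)^N$ is decreasing), giving $\EXP{\dist^2(x_{T+1},S)\mid\widetilde \cF_T} \leq \max_{j}(1-q)^{N_{T+1,j}}\,\|v_{T+1}-x^*\|^2$. Taking total expectation and inserting the rate just established for $\EXP{\|v_{T+1}-x^*\|^2}$, a final application of Jensen's inequality $\EXP{\dist(x_{T+1},S)} \leq \sqrt{\EXP{\dist^2(x_{T+1},S)}}$ together with $\sqrt{a+b}\leq\sqrt{a}+\sqrt{b}$ converts the $O(\kappa^4/T^2 + \|F(x^*)\|^2/(\mu^2 T))$ estimate into $\max_j(1-q)^{N_{T+1,j}/2}\times O(\kappa^2/T + \|F(x^*)\|/(\mu\sqrt{T}))$, exactly as claimed.

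The main obstacle is the bias analysis under the two-phase step size: correctly locating the crossover $k_0 \approx 4\kappa^2$ and pushing the telescoping product through it so that the initial-condition term carries precisely the $\kappa^4/T^2$ dependence rather than a larger power of $\kappa$. Once the telescoping identity is in hand, the noise estimate and the passage to the distance bound are routine.
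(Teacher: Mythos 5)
Your proposal is correct and shares the paper's overall skeleton: the same starting inequality (total expectation of \eqref{Proj_meth_thm_eq3} with $2+1/\zeta_k$ bounded via \eqref{zeta_selection2}), the same passage from $v_{T+1}$ to $x_{T+1}$ via Theorem~\ref{thm_inf_updates}/\eqref{dist_eq_opt}, and the same endgame for the feasibility gap (\eqref{dist_eq_geom} followed by Jensen). The one place you diverge is in how the scalar recursion $a_{k+1}\le(1-\a_k\mu)a_k+\tilde C\a_k^2\|F(x^*)\|^2$ is solved: the paper divides through by $\gamma_k^2$ (with $\gamma_k=\a_k\mu$), invokes $\frac{1-\gamma_k}{\gamma_k^2}\le\frac{1}{\gamma_{k-1}^2}$, and telescopes, so that $\kappa^4$ appears simply as $\gamma_0^{-2}=4\kappa^4$; you instead unroll the products explicitly with a two-phase analysis around the crossover $k_0\approx 4\kappa^2$ and use the identity $\prod_{i=m}^{T}(1-\tfrac{2}{i+1})=\tfrac{m(m-1)}{(T+1)T}$. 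Both yield the same rate; the paper's version is shorter, while yours makes the provenance of $\kappa^4$ (the square of the crossover index) more transparent. One small bookkeeping point: the Phase-1 noise contribution you compute is of order $\kappa^2\|F(x^*)\|^2/(\mu^2T^2)$, which is naturally absorbed into the $\|F(x^*)\|^2/(\mu^2T)$ term once $T\gtrsim\kappa^2$, rather than into the $\kappa^4/T^2$ term as you state (the latter carries no $\|F(x^*)\|^2$ factor); this does not affect the final bound.
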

\begin{proof}
    Since the conditions of Theorem~\ref{Proj_meth_thm_as_conv} are satisfied, the relation \eqref{Proj_meth_thm_eq3} is valid almost surely.
    Upon taking the total expectation on both sides of~\eqref{Proj_meth_thm_eq3}, we obtain
    \begin{align}
        \EXP{\| v_{k+1} - x^* \|^2} &\leq (1-\a_k \mu) \EXP{\| v_{k} - x^* \|^2} + \a_k^2 \left(2 + \frac{1}{\zeta_k} \right) \| F(x^*) \|^2 . \label{eq_proj_thm_rate1}
    \end{align}
    We let $\gamma_k=\a_k \mu$, and chose $\a_k$ as follows
    \begin{align}
        \a_k \leq \min \left(\frac{2}{\mu(k+1)}, \frac{\mu}{2 L^2} \right) \qquad \hbox {implying that}\qquad
        \gamma_k \leq \min \left(\frac{2}{k+1}, \frac{\mu^2}{2 L^2} \right).
    \end{align}
    Hence \eqref{eq_proj_thm_rate1} can be written in terms of $\gamma_k$:
    \begin{align}
        \EXP{\| v_{k+1} - x^* \|^2} &\leq (1-\gamma_k) \EXP{\| v_{k} - x^* \|^2} + \frac{\gamma_k^2}{\mu^2} \left(2 + \frac{1}{\zeta_k} \right) \| F(x^*) \|^2 . \nonumber
    \end{align}
    Next, we divide by $\gamma_k^2$ both sides of the preceding equation and note that, under the above condition on the step size, we have that $\frac{1-\gamma_k}{\gamma_k^2} \leq \frac{1}{\gamma_{k-1}^2}$. Hence, it follows that
    \begin{align}
        \gamma_k^{-2} \EXP{\| v_{k+1} - x^* \|^2} \leq \gamma_{k-1}^{-2} \EXP{\| v_{k} - x^* \|^2} + \frac{1}{\mu^2} \left(2 + \frac{1}{\zeta_k} \right) \| F(x^*) \|^2 . \nonumber
    \end{align}
    Next, we sum the preceding relation from $k=1$ to some iterate $T\ge 1$ and use the constant upper bound on $1/\zeta_k$ from \eqref{zeta_selection2} to obtain
    \begin{align}
        \gamma_T^{-2} \EXP{\| v_{T+1} - x^* \|^2} \leq \gamma_{0}^{-2} \EXP{\| v_{1} - x^* \|^2} + \frac{T}{\mu^2} \left(2 + \frac{1-q}{q(1-\frac{\mu^2}{2L^2})} \right) \| F(x^*) \|^2 . \label{eq_proj_thm_rate2}
    \end{align}
    Note that $\gamma_0 = \min \left(2, \frac{\mu^2}{2 L^2}  \right)$. Now $\frac{\mu}{L} = \frac{1}{\kappa} \leq 1$ where $\kappa$ is the condition number (cf. \eqref{eq-cnum}). So, we obtain $\gamma_0 = \frac{\mu^2}{2 L^2}$, which implies $\gamma_0^{-2} = 4 \kappa^4$. Additionally, note that $\gamma_T^{-2} = \frac{(T+1)^2}{4}$, when $T>>1$. Hence, from the preceding relation we can obtain
    \begin{align}
        &\EXP{\| v_{T+1} - x^* \|^2} \leq O \left(\frac{\kappa^4}{T^2} + \frac{1}{\mu^2 T} \|F(x^*)\|^2 \right) . \nonumber
    \end{align}
    Now, taking the total expectation on both sides of the relation \eqref{dist_eq_opt}, we can see that $\EXP{\| x_{T+1} - x^* \|^2} \leq \EXP{\| v_{T+1} - x^* \|^2}$ and, hence, using the preceding estimate we find
    \begin{align}
        &\EXP{\| x_{T+1} - x^* \|^2} \leq O \left(\frac{\kappa^4}{T^2} + \frac{1}{\mu^2 T} \|F(x^*)\|^2 \right) . \nonumber
    \end{align}
    Now, we consider \eqref{dist_eq_geom} with $k=T+1$ and take the total expectation on both sides of the equation to obtain
    \begin{align}
        \EXP{\dist^2(x_{T+1},S)} &\leq \EXP{\|v_{T+1}-x^*\|^2} \max_{j \in \cJ} (1-q)^{N_{T+1,j}} \nonumber\\
        &\leq \max_{j \in \cJ} (1-q)^{N_{T+1,j}} \times O \left(\frac{\kappa^4}{T^2} + \frac{1}{\mu^2 T} \|F(x^*)\|^2 \right) . \nonumber
    \end{align}
    Using Jensen's inequality, we can finally conclude 
    \begin{align}
        \EXP{\dist(x_{T+1},S)} &= \EXP{\sqrt{\dist^2(x_{T+1},S)}} \leq \sqrt{\EXP{\dist^2(x_{T+1},S)}} \nonumber\\
        &\leq \max_{j \in \cJ} (1-q)^{N_{T+1,j}/2} \times O \left(\frac{\kappa^2}{T} + \frac{1}{\mu \sqrt{T}} \|F(x^*)\| \right) . \nonumber
    \end{align}
\end{proof}
Theorem \ref{Proj_meth_thm_rate} shows that for a diminishing step size, we obtain $O(1/T)$ rate of convergence for $\EXP{\| x_{T+1} - x^* \|^2}$ and a geometric convergence with respect to the number of samples times $O(1/\sqrt{T})$ for $\EXP{\dist(x_{T+1},S)}$. If $N_{k,j} = 1$ for all $k,j$, then $\EXP{\dist(x_{T+1},S)} \leq O(1/\sqrt{T})$. If we fix the iteration number $T$, then we can run the previous iterations with low sample size and in the last iteration, we can take a large $N_{k,j}$ so that the infeasibility gap decreases exponentially fast. Alternatively,
we may grow the batch sizes $N_{k,j}$ in the order of $\log k$ at time $k$ for all $j$.


\section{Modified Korpelevich Method}\label{sec:Korpelevich_method}
Here, we consider a modification of the Korpelevich method \cite{korpelevich1976extragradient} that has improved stability~\cite{mokhtari2020unified} as compared to the standard projection method for VIs.
The modified Korpelevich method (Algorithm \ref{algo_Kor_method}) at iteration $k \geq 1$ uses the same step size $\a_k$ for all the agents $j \in \cJ$ and performs two updates. 
Firstly, it updates the iterate $x_{k-1,j}$ using the mapping $[F(x_{k-1})]_j$ to obtain an auxiliary point $u_{k,j}$ for all agents $j \in \cJ$. We let $u_k=[u_{k,1}, \ldots, u_{k,J}]^T$ be the stacked vector across all agents. In the next step, the iterate $x_{k-1,j}$ is updated using the mapping $[F(u_k)]_j$ to obtain the iterate $v_{k,j}$ for all $j \in \cJ$. Then, Algorithm~\ref{algo_proj_steps} is applied to reduce the infeasibility gap between $v_{k,j}$ and the set $X_j$ rendering  $x_{k,j}$ as the output at iteration $k$ for all $j \in \cJ$. The stacked vectors $v_k, x_k \in Y$ are as in~\eqref{stacked_vector}.
%
%
%
%
\begin{algorithm}
		\caption{Modified Korpelevich Method (For Agent $j$)}
		\label{algo_Kor_method}
		\begin{algorithmic}[1]
			\REQUIRE{$x_{0,j}$, $\a_k$}
                \FOR{$k=1,\ldots$}
                \STATE \textbf{Update:} $u_{k,j} = \Pi_{Y_j} \left[x_{k-1,j} - \a_{k-1} [F(x_{k-1})]_j\right]$
                \STATE \textbf{Update:} $v_{k,j} = \Pi_{Y_j} \left[x_{k-1,j} - \a_{k-1} [F(u_{k})]_j\right]$
                \STATE \textbf{Call Algorithm \ref{algo_proj_steps}:} Pass $v_{k,j}$ and $N_{k,j}$ and get $x_{k,j}$
                \ENDFOR
		\end{algorithmic}
\end{algorithm}	
%
%
%

  We next present a lemma for Algorithm~\ref{algo_Kor_method} showing a relation for the squared distance of the point $v_k$ from the solution $x^*$ of the VI$(S,F)$.
\begin{lemma}\label{Kor_lemma2}
    Under Assumptions~\ref{asum_closed_set},~\ref{asum_lipschitz}, and~\ref{asum_strong_monotone}, the following relation holds almost surely for the iterates $v_{k+1},x_k \in Y$ generated by Algorithm~\ref{algo_Kor_method}, with the stepsize $0<\a_k\le\frac{1}{4(L+\mu)}$, and the solution $x^*$,
    \begin{align*}
&\EXP{\| v_{k+1} - x^* \|^2 \mid \widetilde \cF_{k-1}} 
\leq  ( 1 - \a_k \mu) \| v_{k} - x^* \|^2 + \a_k^2 \left(2 + \frac{1}{\zeta_k} \right) \| F(x^*) \|^2 \nonumber\\
        & \ \ - \sum_{j=1}^J \left[( 1 - \a_k \mu)((1-q)^{-N_{k,j}}-1) - \zeta_k \right] \EXP{\dist^2(x_{k,j},S_j) \mid \cF_{k-1,j}}  \quad a.s., 
\end{align*}
    where $\zeta_k > 0$ is a scalar  and $q$ is given in~\eqref{quantity-q}.
\end{lemma}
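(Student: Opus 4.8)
The plan is to reduce everything to the \emph{same} deterministic one-step bound that drives Lemma~\ref{lem_proj}, namely
\[
\|v_{k+1}-x^*\|^2 \le (1-\a_k\mu)\,\|x_k-x^*\|^2 + \left(2+\frac{1}{\zeta_k}\right)\a_k^2\,\|F(x^*)\|^2 + \zeta_k\,\dist^2(x_k,S),
\]
and then to finish \emph{verbatim} as in the proof of Lemma~\ref{lem_proj}: take the conditional expectation given $\widetilde\cF_{k-1}$, use the Cartesian structure $\dist^2(x_k,S)=\sum_{j=1}^J\dist^2(x_{k,j},S_j)$, and substitute~\eqref{dist_eq_opt} for $\EXP{\|x_k-x^*\|^2\mid\widetilde\cF_{k-1}}$; regrouping the distance terms then produces the claimed inequality. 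Thus the only genuinely new work is to re-establish the displayed deterministic bound for the two-step update $u_{k+1}=\Pi_Y[x_k-\a_k F(x_k)]$, $v_{k+1}=\Pi_Y[x_k-\a_k F(u_{k+1})]$ under the stepsize $\a_k\le 1/(4(L+\mu))$.

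To obtain the deterministic bound, I would first write the standard extragradient inequality coming from the projection characterization at both $u_{k+1}$ and $v_{k+1}$ (taking $y=x^*\in S\subseteq Y$ in the inequality for $v_{k+1}$ and $y=v_{k+1}\in Y$ in the inequality for $u_{k+1}$), combined with the identity $-\|x_k-v_{k+1}\|^2+2\la x_k-u_{k+1},u_{k+1}-v_{k+1}\ra=-\|x_k-u_{k+1}\|^2-\|u_{k+1}-v_{k+1}\|^2$. This yields
\begin{align*}
\|v_{k+1}-x^*\|^2 \le{}& \|x_k-x^*\|^2-\|x_k-u_{k+1}\|^2-\|u_{k+1}-v_{k+1}\|^2\\
&+2\a_k\la F(u_{k+1}),x^*-u_{k+1}\ra+2\a_k\la F(u_{k+1})-F(x_k),u_{k+1}-v_{k+1}\ra.
\end{align*}
The cross term is then bounded by Assumption~\ref{asum_lipschitz} and Young's inequality so as to cancel $-\|u_{k+1}-v_{k+1}\|^2$ and leave only $\a_k^2 L^2\|x_k-u_{k+1}\|^2$.

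Next I would treat $2\a_k\la F(u_{k+1}),x^*-u_{k+1}\ra$ by strong monotonicity (Assumption~\ref{asum_strong_monotone}) to produce $-2\a_k\mu\|u_{k+1}-x^*\|^2+2\a_k\la F(x^*),x^*-u_{k+1}\ra$, and then split the linear term through $\Pi_S[x_k]$ as $\la F(x^*),x^*-\Pi_S[x_k]\ra+\la F(x^*),\Pi_S[x_k]-x_k\ra+\la F(x^*),x_k-u_{k+1}\ra$. The first bracket is nonpositive because $x^*$ solves VI$(S,F)$ and $\Pi_S[x_k]\in S$; the second is bounded by Young's inequality to give $\tfrac{1}{\zeta_k}\a_k^2\|F(x^*)\|^2+\zeta_k\dist^2(x_k,S)$ (this is exactly where the $\dist^2(x_k,S)$ term, at the point $x_k$ and not at $u_{k+1}$, is produced); and the third is bounded by Young's inequality to give $2\a_k^2\|F(x^*)\|^2$ plus a multiple of $\|x_k-u_{k+1}\|^2$. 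Finally I would convert the strong-monotonicity term using $\|u_{k+1}-x^*\|^2\ge\tfrac12\|x_k-x^*\|^2-\|x_k-u_{k+1}\|^2$, which turns $-2\a_k\mu\|u_{k+1}-x^*\|^2$ into the desired $-\a_k\mu\|x_k-x^*\|^2$ plus a further $2\a_k\mu\|x_k-u_{k+1}\|^2$.

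The main obstacle is the bookkeeping of the accumulated $\|x_k-u_{k+1}\|^2$ terms: the negative budget $-\|x_k-u_{k+1}\|^2$ from the fundamental identity must absorb the three positive contributions $\a_k^2 L^2$ (Lipschitz cross term), $2\a_k\mu$ (the strong-monotonicity conversion), and $\tfrac12$ (the $\la F(x^*),x_k-u_{k+1}\ra$ residual, with its Young parameter tuned so that its $\|F(x^*)\|^2$ contribution is exactly $2\a_k^2$, matching the $2$ in the target coefficient). This requires $\a_k^2 L^2+2\a_k\mu\le\tfrac12$, and a short computation shows that $\a_k\le 1/(4(L+\mu))$ is precisely what guarantees it: with this stepsize $2\a_k\mu\le\tfrac{\mu}{2(L+\mu)}$ and $\a_k^2 L^2\le\tfrac{L^2}{16(L+\mu)^2}$, whose sum is at most $\tfrac12$ (using $\mu\le L$, which follows from Assumptions~\ref{asum_lipschitz} and~\ref{asum_strong_monotone}). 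The residual $\|x_k-u_{k+1}\|^2$ coefficient is then nonpositive and may be dropped, leaving exactly the deterministic bound, after which the conditional-expectation step described above completes the proof.
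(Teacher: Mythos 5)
Your proposal is correct and follows essentially the same route as the paper: the firm projection inequality at $v_{k+1}$ and $u_{k+1}$, the split of $2\a_k\la F(u_{k+1}),x^*-u_{k+1}\ra$ via strong monotonicity and the three-way decomposition of $\la F(x^*),x^*-u_{k+1}\ra$ through $\Pi_S[x_k]$ and $x_k$, the conversion $-2\a_k\mu\|u_{k+1}-x^*\|^2\le-\a_k\mu\|x_k-x^*\|^2+2\a_k\mu\|x_k-u_{k+1}\|^2$, and the final conditional expectation with~\eqref{dist_eq_opt}. The only cosmetic difference is your choice of Young's parameter on the Lipschitz cross term (you cancel $\|v_{k+1}-u_{k+1}\|^2$ outright and absorb $\a_k^2L^2\|x_k-u_{k+1}\|^2$, whereas the paper keeps a $-(1-\a_kL/2)\|v_{k+1}-u_{k+1}\|^2$ term and a $-(\tfrac12-2\a_k(L+\mu))\|x_k-u_{k+1}\|^2$ term); both budgets close under $\a_k\le\tfrac{1}{4(L+\mu)}$.
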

\begin{proof}
    Using the definition of the iterate $v_{k+1}$ and the non-expansiveness property of the projection operator, we obtain
    \begin{align}
         \|v_{k+1} - x^*\|^2 &=  \| \Pi_{Y} [x_{k} - \alpha_k F(u_{k+1})] - x^* \|^2 \nonumber\\
         &\leq \| (x_{k} - x^*) - \alpha_k F(u_{k+1}) \|^2 - \| x_{k} - \alpha_k F(u_{k+1}) - v_{k+1} \|^2 \nonumber\\
         & = \| x_{k} - x^* \|^2 - \| v_{k+1} - x_{k} \|^2 + 2 \a_k \la F(u_{k+1}), x^* - v_{k+1} \ra . \label{Kor_bound2}
    \end{align}
    Next, we analyze the last term on the right hand side of \eqref{Kor_bound2} and write it as
    \begin{align}
        2 \a_k \la F(u_{k+1}), x^* \!-\! v_{k+1} \ra 
        \!=\! 2 \a_k \la F(u_{k+1}), x^* \!-\! u_{k+1} \ra + 2 \a_k \la F(u_{k+1}), u_{k+1} \!-\! v_{k+1} \ra . \label{Kor_bound3}
    \end{align}
    We can upper estimate the first term on the right hand side of \eqref{Kor_bound3} using strong monotonicity Assumption~\ref{asum_strong_monotone} on the mapping $F$, as follows:
    \begin{align}
        &2 \a_k \la F(u_{k+1}), x^* - u_{k+1} \ra \leq 2 \a_k \la F(x^*), x^* - u_{k+1} \ra - 2 \a_k \mu \| u_{k+1} - x^* \|^2 \nonumber\\
        &\!=\! 2 \a_k \la F(x^*), x^* - x_k \ra + 2 \a_k \la F(x^*), x_k - u_{k+1} \ra - 2 \a_k \mu \| u_{k+1} - x^* \|^2 \nonumber\\
        & \!\leq\! 2 \a_k \la F(x^*), x^* - x_k \ra + 2 \a_k^2 \|F(x^*)\|^2 + \frac{1}{2} \| x_k - u_{k+1} \|^2 - 2 \a_k \mu \| u_{k+1} - x^* \|^2 , \label{Kor_bound4}
    \end{align}
    where the last inequality is obtained by applying Young's inequality to the term $2 \a_k \la F(x^*), x_k - u_{k+1} \ra$. For the second term on the right hand side of \eqref{Kor_bound2} we write:
    \begin{align}
        \| v_{k+1} - x_{k} \|^2 = \| v_{k+1} - u_{k+1} \|^2 + \| x_{k} - u_{k+1} \|^2 - 2 \la x_{k} - u_{k+1}, v_{k+1} - u_{k+1} \ra . \label{Kor_bound6}
    \end{align}
    Substituting \eqref{Kor_bound4} in~\eqref{Kor_bound3}, and using the resulting relation and~\eqref{Kor_bound6} in~\eqref{Kor_bound2}, we obtain
    \begin{align}
        \|v_{k+1} - x^*\|^2 \leq \| x_{k} - x^* \|^2 - 2 \a_k \mu \| u_{k+1} - x^* \|^2 - \| v_{k+1} - u_{k+1} \|^2 - \frac{1}{2} \| x_{k} - u_{k+1} \|^2 \nonumber\\
        + 2 \la x_{k} \!-\! \a_k F(u_{k+1}) \!-\! u_{k+1}, v_{k+1} \!-\! u_{k+1} \ra + 2 \a_k \la F(x^*), x^* \!-\! x_k \ra + 2 \a_k^2 \|F(x^*)\|^2 . \label{Kor_bound7}
    \end{align}
    The fifth term on the right hand side of the preceding relation can be splitted as
    \begin{align}
        2 \la x_{k} - \a_k F(u_{k+1}) - u_{k+1}, v_{k+1} - u_{k+1} \ra = & 2 \la x_{k} - \a_k F(x_k) - u_{k+1}, v_{k+1} - u_{k+1} \ra \nonumber\\
        &+ 2 \la \a_k (F(x_k) - F(u_{k+1})), v_{k+1} - u_{k+1} \ra . \label{Kor_bound8}
    \end{align}
    By definition, $u_{k+1} = \Pi_Y[x_{k} - \a_k F(x_{k})]$. In addition, $v_{k+1} \in Y$. So, applying projection theorem on the first term of the right hand side of \eqref{Kor_bound8}, we have
    \begin{align}
        2 \la x_{k} - \a_k F(x_k) - u_{k+1}, v_{k+1} - u_{k+1} \ra \leq 0 . \label{Kor_bound9}
    \end{align}
    We bound the second term on the right hand side of~\eqref{Kor_bound8} using Cauchy-Schwarz inequality, Lipschitz continuity of the mapping $F$ (Assumption~\ref{asum_lipschitz}) and Young's inequality to obtain
    \begin{align}
        2 \la \a_k (F(x_k) - F(u_{k+1})), v_{k+1} - u_{k+1} \ra &\leq 2 \a_k \| F(x_k) - F(u_{k+1}) \| \| v_{k+1} - u_{k+1} \| \nonumber\\
        & \leq 2 \a_k L \| x_k - u_{k+1} \| \| v_{k+1} - u_{k+1} \| \nonumber\\
        & \leq 2 \a_k L \| x_k - u_{k+1} \|^2 + \frac{\a_k L}{2} \| v_{k+1} - u_{k+1} \|^2  . \label{Kor_bound10}
    \end{align}
    Using \eqref{Kor_bound8}, \eqref{Kor_bound9}, and \eqref{Kor_bound10}, from \eqref{Kor_bound7}
    we have that
    \begin{align}
        \|v_{k+1} - x^*\|^2 &\leq \| x_{k} - x^* \|^2 - 2 \a_k \mu \| u_{k+1} - x^* \|^2 - \left( 1-  \frac{\a_k L}{2} \right) \| v_{k+1} - u_{k+1} \|^2 \nonumber\\
        & \quad - \left( \frac{1}{2} - 2 \a_k L \right) \| x_{k} - u_{k+1} \|^2 - 2 \a_k \la F(x^*), x_k - x^* \ra + 2 \a_k^2 \|F(x^*)\|^2 . \label{Kor_bound11}
    \end{align}
    We consider the second term on the right hand side of \eqref{Kor_bound11}. We have
    \begin{align}
        \| x_k - x^* \|^2 = \| (x_k - u_{k+1}) + (u_{k+1} - x^*) \|^2 \leq 2 \| x_k - u_{k+1} \|^2 + 2 \| u_{k+1} - x^* \|^2, \nonumber
    \end{align}
    which provides an upper bound on the second term on the right hand side of \eqref{Kor_bound11} as
    \begin{align}
        - 2 \a_k \mu \| u_{k+1} - x^* \|^2 \le - \a_k \mu \| x_k - x^* \|^2 + 2 \a_k \mu \| x_k - u_{k+1} \|^2 . \label{Kor_bound12}
    \end{align}
    The fifth term on the right hand side of \eqref{Kor_bound11} can be  
    estimated using Young's inequality:
\begin{align*}
    - 2 \alpha_k \la F(x^*), x_k - x^* \ra \leq \frac{\a_k^2}{\zeta_k} \| F(x^*) \|^2 + \zeta_k \dist^2(x_k,S), 
\end{align*}
where $\zeta_k>0$ is an arbitrary scalar. 
     By combining the preceding inequality and~\eqref{Kor_bound12} with~\eqref{Kor_bound11}, we obtain the following relation
     \begin{align*}
        \|v_{k+1} - x^*\|^2 &\leq (1 - \a_k \mu) \| x_k - x^* \|^2 + \zeta_k \dist^2(x_k,S) + \left(2 + \frac{1}{\zeta_k} \right) \a_k^2 \|F(x^*)\|^2 \nonumber\\
        & - \left( 1- \frac{\a_k L}{2} \right) \| v_{k+1} - u_{k+1} \|^2 - \left( \frac{1}{2} - 2\a_k (L+\mu) \right) \| x_{k} - u_{k+1} \|^2.
    \end{align*}
Since $\frac{1}{4(L+\mu)} < \frac{1}{\mu}$ and $\frac{1}{4(L+\mu)} < \frac{2}{L}$, so with the step size $0 \leq \a_k \leq \frac{1}{4(L+\mu)}$, we have 
$1-\a_k \mu > 0$, $1- \frac{\a_k L}{2} >0$, and $\frac{1}{2} - 2\a_k (L+\mu) \geq 0$,
implying that
\begin{align}
        \|v_{k+1} - x^*\|^2 &\leq (1 - \a_k \mu) \| x_k - x^* \|^2 + \zeta_k \dist^2(x_k,S) + \left(2 + \frac{1}{\zeta_k} \right) \a_k^2 \|F(x^*)\|^2. 
        \label{eq-kor-end}
    \end{align}
Taking the conditional expectation in relation~\eqref{eq-kor-end}, given $\widetilde \cF_{k-1}$, and using~\eqref{dist_eq_opt} to simplify the first term on the right hand side of~\eqref{eq-kor-end}, we obtain the stated relation.  
\end{proof}
Lemma~\ref{Kor_lemma2} provides the same relation for the iterates of the modified Korpelevich method as that of Lemma~\ref{lem_proj} for the iterates of the projection method. The only difference in these lemmas is in the upper bound on the step size $\a_k$.
Therefore, we can use the same choice for $\zeta_k$ as in~\eqref{zeta_selection} for the projection method (Algorithm~\ref{algo_proj_method}),
which allows us to use any number $N_{k,j}\ge 1$ of samples for all agents $j$.

Next, we establish the almost sure and in-expectation convergence of the iterates $v_k, x_k \in Y$ to the solution $x^*$, and the convergence of the infeasibility gap $\dist(x_k,S)$ to $0$ almost surely and in expectation.
%
%

\begin{theorem}\label{thm_Kor_as_conv}
    Under Assumptions \ref{asum_closed_set},~\ref{asum-subgrad-bound},~\ref{asum_lipschitz},~\ref{asum_strong_monotone},~\ref{asum-regularmod}, and \ref{asum_step_size}, with a step-size $\a_k \leq \frac{1}{4(L+\mu)}$ for all $k$, $\zeta_k$ as in~\eqref{zeta_selection}, and any choice of the batch sizes $N_{k,j}\ge 1$ for all $k$ and $j$, the following statements hold for Algorithm~\ref{algo_Kor_method}:\\
    (i)~the iterates $v_{k}, x_{k} \in Y$ converge to $x^*$ almost surely and in expectation,\\
    (ii)~the distance $\dist(x_k,S)$ converges to $0$ almost surely and in expectation.
\end{theorem}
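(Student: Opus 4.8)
The plan is to follow essentially the same template as the proof of Theorem~\ref{Proj_meth_thm_as_conv}, exploiting the observation already noted after Lemma~\ref{Kor_lemma2}: that lemma yields an identical recursion to Lemma~\ref{lem_proj}, and only the admissible step-size range changes. First I would invoke Lemma~\ref{Kor_lemma2} and substitute the choice $\zeta_k = \frac{q}{1-q}(1-\a_k\mu)$ from~\eqref{zeta_selection}, which is positive under the stepsize condition since $1-\a_k\mu>0$. As explained following~\eqref{sample_selection}, this choice forces $(1-\a_k\mu)((1-q)^{-N_{k,j}}-1)-\zeta_k\ge 0$ for every $N_{k,j}\ge 1$, so the summation term in Lemma~\ref{Kor_lemma2} is nonpositive and may be dropped. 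This leaves the clean recursion
\begin{align*}
\EXP{\| v_{k+1} - x^* \|^2 \mid \widetilde \cF_{k-1}} \leq (1-\a_k \mu) \| v_{k} - x^* \|^2 + \a_k^2 \left(2 + \frac{1}{\zeta_k} \right) \| F(x^*) \|^2 \quad a.s.
\end{align*}

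The only place where the different step-size bound enters is in verifying summability of the noise term $b_k := \left(2 + 1/\zeta_k\right)\a_k^2 \| F(x^*) \|^2$. With $\a_k \le \frac{1}{4(L+\mu)}$ I would note $\a_k\mu \le \frac{\mu}{4(L+\mu)} < \frac14$, hence $1-\a_k\mu > \frac34$ and therefore $\frac{1}{\zeta_k} = \frac{1-q}{q(1-\a_k\mu)} < \frac{4(1-q)}{3q}$ is uniformly bounded in $k$. Combined with $\sum_{k}\a_k^2 < \infty$ from Assumption~\ref{asum_step_size}, this gives $\sum_{k} b_k < \infty$. I would then apply Lemma~\ref{lem_near_super_martingale} with the identification $\tilde\gamma_k = \a_k\mu$, $X_k = \|v_k - x^*\|^2$, and $b_k$ as above (noting $\sum_k \a_k\mu = \infty$ and $\a_k\mu<1$) to conclude $\lim_{k\to\infty} \|v_k - x^*\|^2 = 0$ a.s.\ and $\lim_{k\to\infty} \EXP{\|v_k - x^*\|^2} = 0$; Jensen's inequality then upgrades the latter to $\lim_{k\to\infty}\EXP{\|v_k - x^*\|}=0$, yielding part~(i) for $v_k$.

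For $x_k$ and the infeasibility gap I would invoke the monotonicity relations already available: summing the first inequality of Theorem~\ref{thm_inf_updates} over $j=1,\ldots,J$ gives $\|x_k - x^*\| \le \|v_k - x^*\|$, while $\dist(x_k,S) \le \|x_k - x^*\|$ holds by definition of the projection. Applying Lemma~\ref{lem_near_super_martingale2} twice---first with $Z_k = \|v_k - x^*\|^2$ dominating $Y_k = \|x_k - x^*\|^2$, then with $Z_k = \|x_k - x^*\|^2$ dominating $Y_k = \dist^2(x_k,S)$---transfers both the almost-sure and in-expectation convergence to $x_k$ and to $\dist(x_k,S)$, establishing~(i) for $x_k$ and~(ii).

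I do not anticipate a genuine obstacle: the substance of the argument is entirely contained in Lemma~\ref{Kor_lemma2}, and the remainder is the same supermartingale bookkeeping used for the projection method. The only item requiring care is re-deriving the uniform bound on $1/\zeta_k$ under the tighter Korpelevich stepsize $\a_k \le \frac{1}{4(L+\mu)}$ rather than $\a_k \le \frac{\mu}{2L^2}$, but as shown above this is a one-line estimate.
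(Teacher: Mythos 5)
Your proposal is correct and follows essentially the same route as the paper: the paper's proof likewise reduces to the argument of Theorem~\ref{Proj_meth_thm_as_conv} via Lemma~\ref{Kor_lemma2}, with the only new ingredient being a uniform bound on $1/\zeta_k$ under the Korpelevich step size (the paper states $1/\zeta_k \leq \frac{1-q}{q(1-\frac{\mu}{4(L+\mu)})}$, equivalent to your $1-\a_k\mu>\tfrac34$ estimate). No discrepancies.
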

\begin{proof}
    The proof follows the same line of argument as that of Theorem~\ref{Proj_meth_thm_as_conv}, where we use Lemma~\ref{Kor_lemma2} instead of Lemma~\ref{lem_proj}. Additionally, here $\frac{1}{\zeta_k} \leq \frac{1-q}{q(1-\frac{\mu}{4(L+\mu)})}$, which along with Assumption~\ref{asum_step_size} makes $\sum_{k=0}^\infty \a_k^2/\zeta_k < \infty$.
\end{proof}

 Next, we establish convergence rate results in expectation.
\begin{theorem}\label{thm_Kor_conv_rates}
    Under the assumptions of Theorem~\ref{thm_Kor_as_conv} along with the step size selection as
    $\a_k \leq \min \left(\frac{2}{\mu(k+1)}, \frac{1}{4 (L+\mu)} \right)$ for all $k$, the expected norm squared distances of $v_{k+1}$ and $x_{k+1}$ from the solution $x^*$ are bounded above by the order
    \begin{align}
        O \left( \frac{(1+\kappa)^2}{T^2} + \frac{1}{\mu^2 T} \|F(x^*)\|^2 \right), \nonumber 
    \end{align}
    where $\kappa$ is the condition number defined in~\eqref{eq-cnum}. Moreover, the expected distance of $x_{T+1}$ to the set $S$ diminishes geometrically fast with the sample size, i.e.,
    \begin{align}
        \EXP{\dist(x_{T+1},S)} &\leq \max_{j \in \cJ} (1-q)^{\frac{N_{T+1,j}}{2}} \times O \left( \frac{(1+\kappa)}{T} + \frac{1}{\mu \sqrt{T}} \|F(x^*)\|^2 \right) , \nonumber
    \end{align}
    where $q$ is given by~\eqref{quantity-q}.
\end{theorem}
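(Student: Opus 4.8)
The plan is to follow the proof of Theorem~\ref{Proj_meth_thm_rate} almost verbatim, since Lemma~\ref{Kor_lemma2} furnishes exactly the same one-step recursion for $\EXP{\|v_{k+1}-x^*\|^2 \mid \widetilde\cF_{k-1}}$ as Lemma~\ref{lem_proj} does for the projection method, the sole difference being the admissible range of the step size. First I would invoke Lemma~\ref{Kor_lemma2} with $\zeta_k$ set as in~\eqref{zeta_selection}; this choice makes the bracketed coefficient of the summation term nonnegative for every $N_{k,j}\ge 1$, so that the whole sum can be dropped, leaving the clean recursion $\EXP{\|v_{k+1}-x^*\|^2\mid\widetilde\cF_{k-1}} \le (1-\a_k\mu)\|v_k-x^*\|^2 + \a_k^2(2+1/\zeta_k)\|F(x^*)\|^2$ almost surely. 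Taking total expectation removes the conditioning.

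Next I would set $\gamma_k = \a_k\mu$ and translate the step-size prescription $\a_k \le \min(2/(\mu(k+1)),\,1/(4(L+\mu)))$ into $\gamma_k \le \min(2/(k+1),\,\mu/(4(L+\mu)))$. Rewriting the recursion in terms of $\gamma_k$, dividing by $\gamma_k^2$, and using the relation $(1-\gamma_k)/\gamma_k^2 \le 1/\gamma_{k-1}^2$ (which holds under the step-size condition, exactly as in the proof of Theorem~\ref{Proj_meth_thm_rate}) produces a telescoping inequality. Summing from $k=1$ to $T$ and substituting the uniform bound $1/\zeta_k \le \frac{1-q}{q(1-\mu/(4(L+\mu)))}$ furnished by Theorem~\ref{thm_Kor_as_conv} gives $\gamma_T^{-2}\EXP{\|v_{T+1}-x^*\|^2} \le \gamma_0^{-2}\EXP{\|v_1-x^*\|^2} + \frac{T}{\mu^2}\big(2 + \frac{1-q}{q(1-\mu/(4(L+\mu)))}\big)\|F(x^*)\|^2$.

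The only genuinely new computation is the leading constant $\gamma_0^{-2}$. Since $\mu/(4(L+\mu)) < 1/4 < 2$, the minimum defining $\gamma_0$ is attained by the second argument, so $\gamma_0 = \mu/(4(L+\mu))$ and hence $\gamma_0^{-2} = 16\big((L+\mu)/\mu\big)^2 = 16(1+\kappa)^2$ with $\kappa = L/\mu$; this is precisely where the factor $(1+\kappa)^2$ enters, replacing the $\kappa^4$ of the projection method. Combining this with $\gamma_T^{-2} = (T+1)^2/4$ for large $T$ yields the stated $O((1+\kappa)^2/T^2 + \frac{1}{\mu^2 T}\|F(x^*)\|^2)$ bound for $\EXP{\|v_{T+1}-x^*\|^2}$; the identical bound for $\EXP{\|x_{T+1}-x^*\|^2}$ then follows by taking total expectation in~\eqref{dist_eq_opt}, which gives $\EXP{\|x_{T+1}-x^*\|^2} \le \EXP{\|v_{T+1}-x^*\|^2}$.

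For the infeasibility estimate I would apply~\eqref{dist_eq_geom} at $k=T+1$, take total expectation to factor the geometric term $\max_{j\in\cJ}(1-q)^{N_{T+1,j}}$ out against $\EXP{\|v_{T+1}-x^*\|^2}$, and finish with Jensen's inequality to pass from the squared distance to the distance, which takes the square root of the $O(\cdot)$ term and halves the exponent in the geometric factor. The argument is essentially mechanical given Lemma~\ref{Kor_lemma2}; the only place demanding care is the evaluation of $\gamma_0^{-2}$ under the tighter Korpelevich step-size restriction, which is exactly what distinguishes the $(1+\kappa)^2$ dependence here from the $\kappa^4$ dependence in Theorem~\ref{Proj_meth_thm_rate}.
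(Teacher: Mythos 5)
Your proposal is correct and follows exactly the route the paper takes: the paper's own proof of Theorem~\ref{thm_Kor_conv_rates} simply defers to the argument of Theorem~\ref{Proj_meth_thm_rate}, noting that the only change is the step-size cap and the resulting value of $\gamma_0$, and you have filled in precisely those details (including the correct bound $1/\zeta_k \le \tfrac{1-q}{q(1-\mu/(4(L+\mu)))}$ and the telescoping after division by $\gamma_k^2$). Your evaluation $\gamma_0^{-2}=16(1+\kappa)^2$ is the right one and matches the theorem statement; the paper's proof sketch writes $16(1+\kappa^2)$, which is evidently a typo.
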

\begin{proof}
    The proof relies on Theorem~\ref{thm_Kor_as_conv} and follows the same line of arguments as that of Theorem~\ref{Proj_meth_thm_rate}. The only difference comes from different stepsize selection and defining  
    $\gamma_k=\a_k \mu$, which yields $\gamma_0=\frac{\mu}{4(L+\mu)}$ so that $\gamma_0^{-2}=16(1+\kappa^2)$.
\end{proof}
Theorem~\ref{thm_Kor_conv_rates} shows $O(1/T)$ rate of convergence for the iterates and a geometric rate of convergence for the infeasibility gap with respect to the number of selected samples times $O(1/\sqrt{T})$ for the iterate index $T+1$. The convergence rate results are of the same order as for the iterates generated by the projection method (cf.\ Theorem~\ref{Proj_meth_thm_rate}). 


\section{Modified Popov Method} \label{sec:Popov_method}
Here, we consider the Popov method \cite{popov1980modification} for solving \eqref{VI_problem}.
We modify the standard Popov method by employing feasibility updates (Algorithm~\ref{algo_proj_steps})  resulting in Algorithm~\ref{algo_Popov_method}. Similar to the Korpelevich method, Popov method uses two step updates using an auxiliary iterate $u_k$ and a single mapping computation at this iterate, rather than two computations as in the Korpelevich method.
%
%
%
%
\begin{algorithm}
		\caption{Modified Popov Method (For Agent $j$)}
		\label{algo_Popov_method}
		\begin{algorithmic}[1]
			\REQUIRE{$x_{0,j}$, $\a_k$}
                \FOR{$k=1,\ldots$}
                \STATE \textbf{Update:} $u_{k,j} = \Pi_{Y_j} \left[x_{k-1,j} - \a_{k-1} [F(u_{k-1})]_j\right]$
                \STATE \textbf{Update:} $v_{k,j} = \Pi_{Y_j} \left[x_{k-1,j} - \a_{k-1} [F(u_{k})]_j\right]$
                \STATE \textbf{Call Algorithm \ref{algo_proj_steps}:} Pass $v_{k,j}$ and $N_{k,j}$ and get $x_{k,j}$
                \ENDFOR
		\end{algorithmic}
\end{algorithm}	
%
%
Similar to the preceding algorithms, here also the same step size $\a_k$ is used across all the agents $j\in \cJ$ at any iteration $k \geq 1$, and Algorithm \ref{algo_proj_steps} is used to control the feasibility gap between $v_k$ and the constraint $X$, resulting in the output $x_k$. 
%
%
%
%
%

We start with a lemma providing a relation for the squared distance between the iterate $v_k \in Y$ and the solution  $x^*$ of the VI$(S,F)$.
\begin{lemma}\label{Popov_lem2}
    Under Assumptions~\ref{asum_closed_set},~\ref{asum_lipschitz}, and~\ref{asum_strong_monotone}, the following holds almost surely for the iterates $v_{k+1},x_k \in Y$ of Algorithm~\ref{algo_Popov_method} and the solution $x^*$,
        \begin{align*}
        &\EXP{\| v_{k+1} - x^* \|^2 \mid \widetilde \cF_{k-1}} \leq (1-\mu \a_k + 2 \tau L^2 \a_k^2) \| v_k - x^* \|^2 \nonumber\\
        &+ \a_k^2 \left(\nu + \frac{1}{\zeta_k} \right) \|F(x^*)\|^2 - \left(1-\frac{2}{\tau}-\frac{2\a_k L}{\tau}\right) \EXP{\| v_{k+1} - u_{k+1} \|^2 \mid \widetilde \cF_{k-1}} \nonumber\\
        & + \a_k L \tau \| v_k - u_k \|^2 - \left( 1 - \frac{1}{\nu} - 2\a_k\mu - \a_k L \tau \right) \EXP{\| x_{k} - u_{k+1} \|^2 \mid \widetilde \cF_{k-1}} \nonumber\\
        & -\sum_{j=1}^J \left[(1 \!-\! \mu \a_k \!+\! 2 \tau L^2 \a_k^2)((1-q)^{-N_{k,j}}-1) - \zeta_k \right] \EXP{\dist^2(x_{k,j},S_j) \!\mid \! \cF_{k-1,j}}, 
    \end{align*}
    where $\zeta_k > 0$, $\tau >0$, and $\nu>0$ are arbitrary scalars, and $q$ is given by \eqref{quantity-q}.
\end{lemma}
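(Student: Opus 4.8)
The plan is to follow the same template as the proof of Lemma~\ref{Kor_lemma2} for the Korpelevich method, since here $v_{k+1}=\Pi_Y[x_k-\a_k F(u_{k+1})]$ has exactly the same functional form; the one genuinely new feature is that the extrapolation point $u_{k+1}=\Pi_Y[x_k-\a_k F(u_k)]$ is built from the \emph{stale} gradient $F(u_k)$ rather than $F(x_k)$. First I would invoke the three-point projection inequality for $v_{k+1}$ to obtain
\[
\|v_{k+1}-x^*\|^2\le\|x_k-x^*\|^2-\|v_{k+1}-x_k\|^2+2\a_k\la F(u_{k+1}),x^*-v_{k+1}\ra,
\]
and split the inner product as $\la F(u_{k+1}),x^*-u_{k+1}\ra+\la F(u_{k+1}),u_{k+1}-v_{k+1}\ra$. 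Strong monotonicity (Assumption~\ref{asum_strong_monotone}) applied to the first piece introduces $F(x^*)$ and the dissipative term $-2\a_k\mu\|u_{k+1}-x^*\|^2$; the residual $\la F(x^*),x^*-u_{k+1}\ra$ is then routed through $x_k$, where the VI property \eqref{VI_problem} (via $\Pi_S[x_k]$) kills the sign-definite part and two Young's inequalities (with parameters $\nu$ and $\zeta_k$) produce the noise term $\a_k^2(\nu+1/\zeta_k)\|F(x^*)\|^2$ together with $\tfrac1\nu\|x_k-u_{k+1}\|^2$ and $\zeta_k\dist^2(x_k,S)$.

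The distinctly-Popov step is the treatment of the cross term. After expanding $\|v_{k+1}-x_k\|^2=\|v_{k+1}-u_{k+1}\|^2+\|x_k-u_{k+1}\|^2-2\la v_{k+1}-u_{k+1},x_k-u_{k+1}\ra$ and merging with $2\a_k\la F(u_{k+1}),u_{k+1}-v_{k+1}\ra$, I obtain $2\la x_k-\a_k F(u_{k+1})-u_{k+1},\,v_{k+1}-u_{k+1}\ra$. Since $u_{k+1}$ is the projection of $x_k-\a_k F(u_k)$, I add and subtract $F(u_k)$: the term $2\la x_k-\a_k F(u_k)-u_{k+1},\,v_{k+1}-u_{k+1}\ra$ is nonpositive by the projection theorem (as $v_{k+1}\in Y$), and the leftover $2\a_k\la F(u_k)-F(u_{k+1}),\,v_{k+1}-u_{k+1}\ra$ is bounded by Cauchy--Schwarz, Lipschitz continuity (Assumption~\ref{asum_lipschitz}), and Young's inequality with parameter $\tau$, leaving a stale-gradient penalty governed by $\|u_k-u_{k+1}\|^2$ (coefficient of order $\a_k L\tau$) and a term $\tfrac{\a_k L}{\tau}\|v_{k+1}-u_{k+1}\|^2$.

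The crux, which has no analogue in the Korpelevich argument, is converting $\|u_k-u_{k+1}\|^2$ into quantities the recursion can digest. The structural fact I would exploit is that $v_k$ and $u_{k+1}$ are both projections of a gradient step along the \emph{same} vector $F(u_k)$ (from the base points $x_{k-1}$ and $x_k$, respectively), which lets me bound $\|u_k-u_{k+1}\|$ by a chain of triangle and Young estimates through $v_k$, separating it into the previous consecutive gap $\|v_k-u_k\|^2$, the current prediction gap $\|x_k-u_{k+1}\|^2$, and a part that feeds back into $\|v_k-x^*\|^2$ through the Lipschitz bound on the $F$-differences. This is precisely where the telescoping term $\a_k L\tau\|v_k-u_k\|^2$ (to be annihilated by $-(1-2/\tau-2\a_k L/\tau)\|v_{k+1}-u_{k+1}\|^2$ once the inequality is chained over $k$) and the extra contraction correction $2\tau L^2\a_k^2$ originate. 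I expect the delicate bookkeeping to be arranging these estimates so that $\|x_k-u_{k+1}\|^2$ and $\|v_{k+1}-u_{k+1}\|^2$ receive exactly the coefficients $-(1-1/\nu-2\a_k\mu-\a_k L\tau)$ and $-(1-2/\tau-2\a_k L/\tau)$; crucially, keeping $\tau,\nu,\zeta_k$ symbolic imposes no sign constraints at this stage, which is why the statement carries no step-size restriction.

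Finally, I would convert the dissipative term via $\|x_k-x^*\|^2\le2\|x_k-u_{k+1}\|^2+2\|u_{k+1}-x^*\|^2$, so that $-2\a_k\mu\|u_{k+1}-x^*\|^2\le-\a_k\mu\|x_k-x^*\|^2+2\a_k\mu\|x_k-u_{k+1}\|^2$ and the leading coefficient becomes $1-\a_k\mu+2\tau L^2\a_k^2$. Collecting all contributions to $\|v_{k+1}-u_{k+1}\|^2$, $\|x_k-u_{k+1}\|^2$, $\|v_k-u_k\|^2$, and the distance term gives the deterministic inequality; taking the conditional expectation given $\widetilde\cF_{k-1}$ and substituting \eqref{dist_eq_opt} for $\EXP{\|x_k-x^*\|^2\mid\widetilde\cF_{k-1}}$ (which replaces $\|x_k-x^*\|^2$ by $\|v_k-x^*\|^2$ minus the per-agent distance terms weighted by $(1-q)^{-N_{k,j}}-1$) then yields the stated bound.
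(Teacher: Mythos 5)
Your proposal is correct and follows essentially the same route as the paper's proof: the three-point projection inequality, the same splitting of $2\a_k\la F(u_{k+1}),x^*-v_{k+1}\ra$, strong monotonicity with Young's inequalities in $\nu$ and $\zeta_k$, the projection theorem applied after adding and subtracting $F(u_k)$, the triangle/Young chain through $v_k$ and $x^*$ to control $\|u_k-u_{k+1}\|$, the conversion $-2\a_k\mu\|u_{k+1}-x^*\|^2\le-\a_k\mu\|x_k-x^*\|^2+2\a_k\mu\|x_k-u_{k+1}\|^2$, and the final passage to conditional expectations via \eqref{dist_eq_opt}. The only cosmetic difference is that the paper does not invoke your structural observation that $v_k$ and $u_{k+1}$ share the gradient $F(u_k)$; it bounds $\|u_k-u_{k+1}\|$ by plain triangle inequalities through $v_k$, $x^*$, and $x_k$, which is what your estimates amount to anyway.
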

\begin{proof}
    Using the non-expansiveness property of the projection on the definition of $v_{k+1}$ from Algorithm~\ref{algo_Popov_method}, we obtain the following relation
    \begin{align}
        \| v_{k+1} - x^* \|^2 \leq \| x_{k} - x^* \|^2 - \| v_{k+1} - x_{k} \|^2 + 2 \a_k \la F(u_{k+1}), x^* - v_{k+1} \ra . \label{Pop_bound2}
    \end{align}
    The second term in~\eqref{Pop_bound2} can be written as
    \begin{align}
        \| v_{k+1} - x_{k} \|^2 = \| v_{k+1} - u_{k+1} \|^2 + \| x_{k} - u_{k+1} \|^2 - 2 \la x_{k} - u_{k+1}, v_{k+1} - u_{k+1} \ra . \label{pop_secterm}
    \end{align}
    The third term on the right hand of \eqref{Pop_bound2} can be written as
    \begin{align}
        2 \a_k \la F(u_{k+1}), x^* - v_{k+1} \ra = 2 \a_k \la F(u_{k+1}), x^* - u_{k+1} \ra + 2 \a_k \la F(u_{k+1}), u_{k+1} - v_{k+1} \ra . \label{pop_map}
    \end{align}
    By substituting the relations~\eqref{pop_secterm} and~\eqref{pop_map} back in relation~\eqref{Pop_bound2}, we obtain
    \begin{align}
        \| v_{k+1} - x^* \|^2 &\leq \| x_{k} - x^* \|^2 - \| v_{k+1} - u_{k+1} \|^2 - \| x_{k} - u_{k+1} \|^2 \nonumber\\
        &+ 2 \la x_{k} - \a_k F(u_{k+1}) - u_{k+1}, v_{k+1} - u_{k+1} \ra + 2 \a_k \la F(u_{k+1}), x^* - u_{k+1} \ra. \label{Pop_no1}
    \end{align}
    The last term on the right hand side of~\eqref{Pop_no1} can be upper bounded using the strong monotonicity of the mapping $F(\cdot)$ (Assumption~\ref{asum_strong_monotone}), as follows:
    \begin{align}
        2 \a_k \la F(u_{k+1}), x^* - u_{k+1} \ra &\leq 2 \a_k \la F(x^*), x^* - u_{k+1} \ra - 2 \a_k \mu \| u_{k+1} - x^* \|^2 \nonumber\\
        = 2 \a_k \la F(x^*), x^* - x_k \ra &+ 2 \a_k \la F(x^*), x_k - u_{k+1} \ra - 2 \a_k \mu \| u_{k+1} - x^* \|^2 \nonumber\\
        \leq 2 \a_k \la F(x^*), x^* - x_k \ra &+ \a_k^2 \nu \|F(x^*)\|^2 + \frac{1}{\nu} \| x_k - u_{k+1} \|^2 - 2 \a_k \mu \| u_{k+1} - x^* \|^2 , \label{Pop_bound4}
    \end{align}
    where the last inequality in~\eqref{Pop_bound4} is obtained by applying Young's inequality to the term $2 \a_k \la F(x^*), x_k - u_{k+1} \ra$, with some $\nu>0$. The first term on the right hand side of \eqref{Pop_bound4} can be estimated using Young's inequality, again, and obtain
    \begin{align}
        2 \alpha_k \la F(x^*), x^* - x_k \ra \leq \frac{\a_k^2}{\zeta_k} \| F(x^*) \|^2 + \zeta_k \dist^2(x_k,S) , \label{Pop_no2}
    \end{align}
    where $\zeta_k > 0$ is a scalar. For the last term on the right hand side of \eqref{Pop_bound4}, we have 
    \begin{align}
    - 2 \a_k\mu \|u_{k+1}-x^* \|^2 \le - \a_k \mu \|x_k-x^*\|^2 + 2 \a_k\mu \|u_{k+1}-x_k\|^2 . \label{Pop_bound_st_mon_term}
    \end{align}
    Combining the estimates in~\eqref{Pop_no2} and~\eqref{Pop_bound4} with relation~\eqref{Pop_bound4},
     we obtain
     \begin{align}\label{eq-pop-1}
        &2 \a_k \la F(u_{k+1}), x^* - u_{k+1} \ra \leq \frac{\a_k^2}{\zeta_k} \| F(x^*) \|^2 + \zeta_k \dist^2(x_k,S)
        + \a_k^2 \nu \|F(x^*)\|^2\cr 
        &+ \frac{1}{\nu} \| x_k - u_{k+1} \|^2 
        - \a_k \mu \|x_k-x^*\|^2 + 2 \a_k\mu \|u_{k+1}-x_k\|^2 .\end{align}

    Next, the fourth term on the right hand side of~\eqref{Pop_no1} can be written as
    \begin{align}
        2 \la x_{k} - \a_k F(u_{k+1}) - u_{k+1}, v_{k+1} - u_{k+1} \ra = & 2 \la x_{k} - \a_k F(u_k) - u_{k+1}, v_{k+1} - u_{k+1} \ra \nonumber\\
        &+ 2 \la \a_k (F(u_k) - F(u_{k+1})), v_{k+1} - u_{k+1} \ra . \label{Pop_bound8}
    \end{align}
    By definition, $u_{k+1} = \Pi_Y [x_{k} - \a_k F(u_k)]$. Also $v_{k+1} \in Y$. Thus, by the Projection Theorem for the first term on the right hand side of \eqref{Pop_bound8}, we obtain
    \begin{align}
        2 \la x_{k} - \a_k F(u_k) - u_{k+1}, v_{k+1} - u_{k+1} \ra \leq 0 , \label{Pop_bound9}
    \end{align}
    and hence we can drop this term from~\eqref{Pop_bound8}. We estimate the second term on the right hand side of~\eqref{Pop_bound8} by using Cauchy-Schwarz inequality, Lipschitz continuity of $F$ (Assumption~\ref{asum_lipschitz}), and Young's inequality, to obtain
    \begin{align}
        &2 \la \a_k (F(u_k) - F(u_{k+1})), v_{k+1} - u_{k+1} \ra \leq 2 \a_k \| F(u_k) - F(u_{k+1}) \| \| v_{k+1} - u_{k+1} \| \nonumber\\
        & \leq 2 \a_k L \| u_k - u_{k+1} \| \| v_{k+1} - u_{k+1} \| \nonumber\\
        & \leq 2 \a_k L (\| u_k - y \| + \| x_k - y\| + \| x_k - u_{k+1} \|) \| v_{k+1} - u_{k+1} \| \nonumber\\
        & \leq 2 \a_k L (\| u_k - v_k \| + \| v_k - y \| + \| x_k - y\| + \| x_k - u_{k+1} \|) \| v_{k+1} - u_{k+1} \| \nonumber\\
        &\leq \a_k L \tau (\| v_k - u_k \|^2+\| x_k - u_{k+1} \|^2) + \a_k^2 L^2 \tau (\| v_k - y \|^2 + \| x_k - y\|^2) \nonumber\\
        & \hspace{6cm}+ \left(\frac{2\a_k L}{\tau}+\frac{2}{\tau}\right) \| v_{k+1} - u_{k+1} \|^2,  \label{Pop_bound10}
    \end{align}
    where $\tau>0$ is arbitrary. Using the estimates~\eqref{eq-pop-1} and~\eqref{Pop_bound10} in~\eqref{Pop_no1}, we obtain 
    \begin{align}\label{eq-pop-main}
        \| v_{k+1} - x^* \|^2 &\leq (1-\a_k\mu + \a_k^2 L^2 \tau) \| x_{k} - x^* \|^2 + \a_k^2 L^2 \tau \| v_k - x^* \|^2 + \zeta_k \dist^2(x_k,S)  \nonumber\\
        & + \a_k^2 \left(\nu + \frac{1}{\zeta_k} \right) \|F(x^*)\|^2 - \left( 1 - \frac{1}{\nu} - \a_k(2\mu + L \tau) \right) \| x_{k} - u_{k+1} \|^2 \nonumber\\
        & + \a_k L \tau \| v_k - u_k \|^2  - \left( 1 - \frac{2}{\tau} - \frac{2\a_k L}{\tau} \right) \| v_{k+1} - u_{k+1} \|^2 . \nonumber
    \end{align}
Upon taking the conditional expectation in relation~\eqref{eq-pop-main}, given $\widetilde \cF_{k-1}$, and using the relation~\eqref{dist_eq_opt}, we obtain the stated result. 
\end{proof}
%
%
%
The coefficient of the infeasibility term $\EXP{\dist^2(x_{k,j},S_j) \mid \cF_{k-1,j}}$ in Lemma~\ref{Popov_lem2} can be controlled by choosing $\zeta_k$ so that $(1-\mu \a_k + 2 \tau L^2 \a_k^2)((1-q)^{-N_{k,j}}-1) \geq \zeta_k$, or equivalently 
    \begin{align}
        N_{k,j} \geq \frac{\log\left(\frac{ \zeta_k+1-\a_k\mu  + 2 \tau L^2 \a_k^2}{1-\a_k\mu + 2 \tau L^2 \a_k^2}\right)}{\log\left(\frac{1}{1-q}\right)} = \log_{\frac{1}{1-q}}\left(\frac{ \zeta_k+1-\a_k\mu  + 2 \tau L^2 \a_k^2}{1-\a_k\mu + 2 \tau L^2 \a_k^2}\right) , \; \forall k\geq 1 \;, j \in \cJ. \nonumber
    \end{align}
We want Algorithm \ref{algo_Popov_method} to run for to work for any number $N_{k,j}$ of samples, i.e., $N_{k,j} \geq 1$ for all $k \geq 1$ and $j \in \cJ$. Hence, we impose the condition that $\frac{ \zeta_k+1-\a_k\mu  + 2 \tau L^2 \a_k^2}{1-\a_k\mu + 2 \tau L^2 \a_k^2} = \frac{1}{1-q}$ resulting in the following choice for $\zeta_k$:
\begin{align}
    \zeta_k = \frac{q}{1-q} (1-\a_k\mu + 2 \tau L^2 \a_k^2) . \label{zeta_selection4}
\end{align}
We will have $\zeta_k>0$ when $1-\a_k \mu > 0$. 
We have the following result regarding the convergence of the modified Popov method.

%
%


\begin{theorem}\label{Popov_thm1}
    Let Assumptions \ref{asum_closed_set},~\ref{asum-subgrad-bound},~\ref{asum_lipschitz},~\ref{asum_strong_monotone},~\ref{asum-regularmod}, and \ref{asum_step_size} hold, with $\zeta_k$ as~\eqref{zeta_selection4}, and the constants $\tau > 2$ and $\nu>1$. 
    Also, let the step-sizes be non-increasing, i.e., $\a_{k+1} \le \a_{k}$, and satisfy $\a_k \leq \min \left( \frac{\mu}{4 \tau L^2}, \frac{1-\frac{2}{\tau}}{\frac{2L}{\tau}+L\tau}, \frac{1-\frac{1}{\nu}}{2\mu+L\tau} \right)$ for all $k\ge 0$. Then, the following results hold for the modified Popov method:\\
    (i)~the iterates $x_k,v_k\in Y$ converge to $x^*$ almost surely and in expectation,\\
    (ii)~the distance $\dist(x_k,S)$ converges to $0$ almost surely and in expectation.
\end{theorem}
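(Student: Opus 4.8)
The plan is to reduce the recursion of Lemma~\ref{Popov_lem2} to a near-supermartingale relation for a suitable Lyapunov function and then invoke Lemma~\ref{lem_near_super_martingale}, as in the proofs of Theorems~\ref{Proj_meth_thm_as_conv} and~\ref{thm_Kor_as_conv}. First I would substitute $\zeta_k$ from~\eqref{zeta_selection4} into Lemma~\ref{Popov_lem2}; this makes the bracket multiplying each $\EXP{\dist^2(x_{k,j},S_j)\mid\cF_{k-1,j}}$ nonnegative for every $N_{k,j}\ge 1$, so the entire sum over $j$ may be dropped. Next I would apply the three step-size bounds: $\a_k\le\frac{\mu}{4\tau L^2}$ gives $1-\mu\a_k+2\tau L^2\a_k^2\le 1-\frac{\mu\a_k}{2}$; $\a_k\le\frac{1-1/\nu}{2\mu+L\tau}$ makes the coefficient $1-\frac1\nu-2\a_k\mu-\a_k L\tau$ of $\|x_k-u_{k+1}\|^2$ nonnegative, so that term can also be discarded; and $\tau>2$ with the step-size bound keeps $A_k:=1-\frac2\tau-\frac{2\a_k L}{\tau}>0$. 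What remains is
\begin{align*}
\EXP{\|v_{k+1}-x^*\|^2\mid\widetilde\cF_{k-1}} &\le \left(1-\tfrac{\mu\a_k}{2}\right)\|v_k-x^*\|^2 + \a_k L\tau\|v_k-u_k\|^2 \\
&\quad - A_k\,\EXP{\|v_{k+1}-u_{k+1}\|^2\mid\widetilde\cF_{k-1}} + b_k ,
\end{align*}
with $b_k=\a_k^2\big(\nu+\frac1{\zeta_k}\big)\|F(x^*)\|^2$; the choice of $\zeta_k$ bounds $1/\zeta_k$ from above, so Assumption~\ref{asum_step_size} gives $\sum_k b_k<\infty$.

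The main obstacle, and what distinguishes this proof from the projection and Korpelevich cases, is the \emph{positive} cross term $\a_k L\tau\|v_k-u_k\|^2$, which blocks a direct application of Lemma~\ref{lem_near_super_martingale} to $\|v_k-x^*\|^2$ alone. To absorb it I would use the Lyapunov function
\[
V_k=\|v_k-x^*\|^2+\rho\,\|v_k-u_k\|^2 ,
\]
with a constant $\rho$ satisfying $\a_0 L\tau\le\rho\le A_0$. Such a $\rho$ exists precisely because the step-size bound $\a_0\le\frac{1-2/\tau}{2L/\tau+L\tau}$ rearranges to $\a_0 L\tau\le A_0$. Since the step sizes are non-increasing, $A_k$ is non-decreasing in $k$, so $\a_k L\tau\le\a_0 L\tau\le\rho$ and $\rho\le A_0\le A_k$ for all $k$. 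The second chain lets me replace $A_k$ by $\rho$ on the left of the displayed bound, producing $\EXP{V_{k+1}\mid\widetilde\cF_{k-1}}$ there; the first lets me dominate $\a_k L\tau\|v_k-u_k\|^2$ by a fraction of $\rho\|v_k-u_k\|^2$.

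Carrying this out gives $\EXP{V_{k+1}\mid\widetilde\cF_{k-1}}\le(1-\tilde\gamma_k)V_k+b_k$ with $\tilde\gamma_k=\min\big(\tfrac{\mu\a_k}{2},\,1-\tfrac{\a_k L\tau}{\rho}\big)\in[0,1]$. The one point needing care is verifying $\sum_k\tilde\gamma_k=\infty$: because $\a_k\to0$ (as $\sum_k\a_k^2<\infty$), for all large $k$ the minimum equals $\tfrac{\mu\a_k}{2}$, and $\sum_k\a_k=\infty$ by Assumption~\ref{asum_step_size}. Lemma~\ref{lem_near_super_martingale} then yields $V_k\to0$ a.s.\ and in expectation, hence $\|v_k-x^*\|^2\to0$ and $\|v_k-u_k\|^2\to0$ in both senses. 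Finally, summing the first inequality of Theorem~\ref{thm_inf_updates} over $j$ gives $\|x_k-x^*\|\le\|v_k-x^*\|$, so Lemma~\ref{lem_near_super_martingale2} transfers the convergence to $x_k$, proving~(i); and $\dist(x_k,S)\le\|x_k-x^*\|$ (or~\eqref{dist_eq_geom}) yields~(ii). I expect the existence of $\rho$ and the verification $\sum_k\tilde\gamma_k=\infty$ to be the only delicate steps; the rest parallels Theorem~\ref{Proj_meth_thm_as_conv}.
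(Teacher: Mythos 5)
Your proposal is correct and follows essentially the same route as the paper: after substituting $\zeta_k$ from \eqref{zeta_selection4} and the step-size bounds into Lemma~\ref{Popov_lem2}, the paper likewise forms a Lyapunov function coupling $\|v_k-x^*\|^2$ with $\|v_k-u_k\|^2$ (namely $h_k=\|v_k-x^*\|^2+\a_k L\tau\|v_k-u_k\|^2$, i.e.\ with the time-varying weight $\a_k L\tau$ in place of your constant $\rho$), obtains a recursion with contraction factor $\max\left(1-\tfrac{\mu\a_k}{2},\,\a_k L\tau\right)$ that equals $1-\tfrac{\mu\a_k}{2}$ for all large $k$ because $\a_k\to 0$, and then invokes Lemmas~\ref{lem_near_super_martingale} and~\ref{lem_near_super_martingale2} exactly as you do. Your fixed weight $\rho\in\left[\a_0 L\tau,\;1-\tfrac{2}{\tau}-\tfrac{2\a_0 L}{\tau}\right]$, whose existence follows from the same step-size bound the paper uses to keep its $f_k\ge 0$, is an equivalent (and arguably cleaner) way of absorbing the cross term $\a_k L\tau\|v_k-u_k\|^2$.
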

\begin{proof}
For the step size  $\a_k \leq \frac{\mu}{4 \tau L^2},$ with $\tau>2$,
we have that $\a_k<\frac{\mu}{8L^2}\le \frac{1}{8\mu}$ since $\mu\le L$. Hence, $\a_k \mu < 1$, and
    with the selection of $\zeta_k$ as in~\eqref{zeta_selection4}, 
    from Lemma~\ref{Popov_lem2} we obtain that 
    almost surely
    \begin{align}
        &\EXP{\| v_{k+1} - x^* \|^2 \mid \widetilde \cF_{k-1}} \leq (1-\mu \a_k + 2 \tau L^2 \a_k^2) \| v_k - x^* \|^2 \nonumber\\
        &+ \a_k^2 \left(\nu + \frac{1}{\zeta_k} \right) \|F(x^*)\|^2 - \left(1-\frac{2}{\tau}-\frac{2\a_k L}{\tau}\right) \EXP{\| v_{k+1} - u_{k+1} \|^2 \mid \widetilde \cF_{k-1}} \nonumber\\
        & + \a_k L \tau \| v_k - u_k \|^2 - \left( 1 - \frac{1}{\nu} - 2\a_k\mu - \a_k L \tau \right) \EXP{\| x_{k} - u_{k+1} \|^2 \mid \widetilde \cF_{k-1}}, \label{Popov_no3}
    \end{align}
    where we  drop the last term on the right hand side of the relation in Lemma~\ref{Popov_lem2}.
    
    Let us define two sequences
    \begin{align}
        h_k &= \| v_k - x^* \|^2 + \a_k L \tau \| v_k - u_k \|^2, \nonumber\\
        f_k &= \left(1-\frac{2}{\tau}-\frac{2\a_k L}{\tau} - \a_{k+1}  L \tau \right) \EXP{\| v_{k+1} - u_{k+1} \|^2 \mid \widetilde \cF_{k-1}} \nonumber\\
        &+ \left( 1 - \frac{1}{\nu} - 2\a_k\mu - \a_kL \tau \right) \EXP{\| x_{k} - u_{k+1} \|^2 \mid \widetilde \cF_{k-1}} . \label{Popov_thm1_eq3}
    \end{align}
    Then, using these sequences 
    relation~\eqref{Popov_no3} can be written as: almost surely,
    \begin{align}
        \EXP{h_{k+1} \mid \widetilde \cF_{k-1}} \leq \max(1-\mu \a_k + 2 \tau L^2 \a_k^2, \a_k L \tau) h_k - f_k + \a_k^2 \left(\nu + \frac{1}{\zeta_k} \right) \|F(x^*)\|^2. \label{sup_mart}
    \end{align}
    We want $1-\mu \a_k + 2 \tau L^2 \a_k^2 \leq 1 - \frac{\mu \a_k}{2}$, $\a_k L \tau \leq 1$, $1-\frac{2}{\tau}-\frac{2\a_k L}{\tau} - \a_{k+1}  L \tau \geq 0$, and $1 - \frac{1}{\nu} - 2\a_k\mu - \a_k L \tau \geq 0$, which together with $\a_{k+1}\le \a_k$ for all $k$, gives the step size selection as $0< \a_k \leq \min \left( \frac{\mu}{4 \tau L^2}, \frac{1}{L\tau}, \frac{1-\frac{2}{\tau}}{\frac{2L}{\tau}+L\tau}, \frac{1-\frac{1}{\nu}}{2\mu+L\tau} \right)$. Note that $\frac{1}{L \tau} > \min \left(\frac{1-\frac{2}{\tau}}{\frac{2L}{\tau}+L\tau}, \frac{1-\frac{1}{\nu}}{2\mu+L\tau} \right)$ and, hence, that term can be ignored. This leaves us with the step size selection rule as
    \begin{align}
         0< \a_k \leq \min \left( \frac{\mu}{4 \tau L^2}, \frac{1-\frac{2}{\tau}}{\frac{2L}{\tau}+L\tau}, \frac{1-\frac{1}{\nu}}{2\mu+L\tau} \right), \label{Popov-stepsize2}
    \end{align}
    which is assumed.
    With this step size, from~\eqref{sup_mart} we conclude that the following relation holds almost surely
    \begin{align}
        \EXP{h_{k+1} \mid \widetilde \cF_{k-1}} \leq \max \left(1 - \frac{\mu \a_k}{2}, \a_k L \tau \right) h_k - f_k + \a_k^2 \left(\nu + \frac{1}{\zeta_k} \right) \|F(x^*)\|^2 , \label{sup_mart2}
    \end{align}
    with $f_k\ge0$.
    Since $\a_k $ is diminishing to 0 (due to the assumption that $\sum_{k=0}^\infty \a_k^2<\infty$), after some iteration index $N$, we will have  $\max \left(1 - \frac{\mu \a_k}{2}, \a_k L \tau \right) = 1 - \frac{\mu \a_k}{2}$ for all $k\ge N$. 
    Hence,  \eqref{sup_mart2} implies that for all  $k \geq N$, 
    \begin{align}
        \EXP{h_{k+1} \mid \widetilde \cF_{k-1}} \leq \left(1 - \frac{\mu \a_k}{2} \right) h_k + \a_k^2 \left(\nu + \frac{1}{\zeta_k} \right) \|F(x^*)\|^2 \qquad a.s., \label{Popov-mart}
    \end{align}
     where we omit $f_k$ from  \eqref{sup_mart2} since $f_k\ge0$. We want to upper estimate the quantity $1/\zeta_k$ in the last term of~\eqref{Popov-mart}. Note that for $\a_k$ satisfying~\eqref{Popov-stepsize2}, we have $1-\a_k\mu > 0$. Using the selection of $\zeta_k$ as in~\eqref{zeta_selection4}, we get
    \begin{align}
        O \left( \frac{1}{\zeta_k} \right) \leq O \left(\frac{1}{((1- \max_k \{\a_k\}\mu) + 2 \tau L^2 \a_k^2)} \right) = O(1), \label{zeta_order}
    \end{align}
    Also $\nu = O(1)$. Hence, by Assumption~\ref{asum_step_size}, we obtain $\sum_{k=0}^\infty \a_k^2 (\nu+1/\zeta_k) < \infty$. Use of the preceding relations along with Lemma \ref{lem_near_super_martingale} (with the index-shift) on the relation \eqref{Popov-mart}, we conclude that almost surely $\lim_{k \rightarrow \infty} h_k = 0$ and $\lim_{k \rightarrow \infty} \EXP{h_k} = 0$. Using the definition of $h_k$ from \eqref{Popov_thm1_eq3}, we can conclude that almost surely $\lim_{k \rightarrow \infty} \|v_k - x^* \|^2 = 0$ and $\lim_{k \rightarrow \infty} \EXP{\|v_k - x^*\|^2} = 0$. The rest of the proof follows the same line of analysis as in the proof of Theorem~\ref{Proj_meth_thm_as_conv} from relation \eqref{mart_proj1} onwards.
\end{proof}

Next, we state the convergence rate results for Algorithm~\ref{algo_Popov_method}.
\begin{theorem}\label{Popov_thm2}
Under the assumptions of Theorem~\ref{Popov_thm1}, with non-increasing step sizes $\a_k \leq \min \left( \frac{4}{\mu(k+1)}, \frac{\mu}{4 \tau L^2}, \frac{1-\frac{2}{\tau}}{\frac{2L}{\tau}+L\tau}, \frac{1-\frac{1}{\nu}}{2\mu+L\tau} \right)$ for all $k$, with the constants $\tau > 2$ and $\nu > 1$, the quantities $\EXP{\| v_{T+1} - x^* \|^2}$ and $\EXP{\| x_{T+1} - x^* \|^2}$ are bounded above by
    \begin{align}
        O \left(\frac{4}{\mu^2 \a_0^2 T^2} + \frac{4\tau \kappa}{\mu \a_1 T^2} + \frac{1}{\mu^2 T} \|F(x^*)\|^2 \right) . \nonumber
    \end{align}
    Moreover, the expected distance of $x_k$ to the set $S$ satisfies
    \begin{align}
        &\EXP{\dist(x_{T+1},S)} \leq \max_{j \in \cJ} (1-q)^{\frac{N_{T+1,j}}{2}} \times O \left(\frac{2}{\mu \a_0 T} + \frac{2 \sqrt{\tau \kappa}}{\sqrt{\mu \a_1}\, T} + \frac{1}{\mu \sqrt{T}} \|F(x^*)\| \right) , \nonumber
    \end{align}
    where $q$ is a constant given in \eqref{quantity-q}.
\end{theorem}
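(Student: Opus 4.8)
The plan is to transcribe the argument of Theorem~\ref{Proj_meth_thm_rate} to the Popov setting, the only structural change being that the driving recursion is the one for the compound Lyapunov function $h_k=\|v_k-x^*\|^2+\a_kL\tau\|v_k-u_k\|^2$ from~\eqref{Popov_thm1_eq3} rather than for $\|v_k-x^*\|^2$ alone. First I would secure the clean recursion~\eqref{Popov-mart} from the very first iteration: since the assumed step size forces $\a_k\le\frac{\mu}{4\tau L^2}$, one has $\a_kL\tau\le\frac{1}{4\kappa}\le\frac14$ while $1-\frac{\mu\a_k}{2}\ge\frac{15}{16}$, so the coefficient $\max(1-\frac{\mu\a_k}{2},\,\a_kL\tau)$ appearing in~\eqref{sup_mart2} equals $1-\frac{\mu\a_k}{2}$ for every $k\ge1$. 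Discarding the nonnegative terms $f_k$ and taking total expectation in~\eqref{Popov-mart} then yields the deterministic recursion $\EXP{h_{k+1}}\le(1-\frac{\mu\a_k}{2})\EXP{h_k}+\a_k^2(\nu+1/\zeta_k)\|F(x^*)\|^2$.

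Next I would set $\gamma_k=\frac{\mu\a_k}{2}$, so that $\a_k^2=\frac{4\gamma_k^2}{\mu^2}$ and the cap $\a_k\le\frac{4}{\mu(k+1)}$ becomes $\gamma_k\le\frac{2}{k+1}$, exactly the sequence used for the projection method. The recursion reads $\EXP{h_{k+1}}\le(1-\gamma_k)\EXP{h_k}+\frac{4\gamma_k^2}{\mu^2}(\nu+1/\zeta_k)\|F(x^*)\|^2$. Dividing by $\gamma_k^2$ and invoking the same elementary inequality $\frac{1-\gamma_k}{\gamma_k^2}\le\gamma_{k-1}^{-2}$ as in Theorem~\ref{Proj_meth_thm_rate}, I would telescope from $k=1$ to $T$, using the uniform bounds $\nu=O(1)$ and $1/\zeta_k=O(1)$ from~\eqref{zeta_order} to keep the accumulated noise of order $\frac{T}{\mu^2}\|F(x^*)\|^2$. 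This gives $\gamma_T^{-2}\EXP{h_{T+1}}\le\gamma_0^{-2}\EXP{h_1}+\frac{T}{\mu^2}O(1)\|F(x^*)\|^2$.

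I would then extract the rate by multiplying through by $\gamma_T^2$, using $\gamma_0^{-2}=\frac{4}{\mu^2\a_0^2}$ and $\gamma_T^{-2}=\frac{(T+1)^2}{4}$ for large $T$. The initial term splits as $\EXP{h_1}=\EXP{\|v_1-x^*\|^2}+\a_1L\tau\EXP{\|v_1-u_1\|^2}$: the first piece gives the $\frac{4}{\mu^2\a_0^2T^2}$ contribution, and the second gives $\frac{4\tau\kappa}{\mu\a_1T^2}$ after writing $\kappa=L/\mu$ and using $\a_1/\a_0^2\le1/\a_1$ (valid since $\a_1\le\a_0$), while the noise yields $\frac{1}{\mu^2T}\|F(x^*)\|^2$. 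Because $h_{T+1}\ge\|v_{T+1}-x^*\|^2$, this bounds $\EXP{\|v_{T+1}-x^*\|^2}$, and~\eqref{dist_eq_opt}, which gives $\EXP{\|x_{T+1}-x^*\|^2}\le\EXP{\|v_{T+1}-x^*\|^2}$, transfers the same order to $x_{T+1}$.

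For the feasibility estimate I would take $k=T+1$ in~\eqref{dist_eq_geom}, pass to total expectation, pull $\max_{j\in\cJ}(1-q)^{N_{T+1,j}}$ out of the (deterministic-exponent) sum, and bound the residual $\EXP{\|v_{T+1}-x^*\|^2}$ by the rate just derived; Jensen's inequality $\EXP{\dist(x_{T+1},S)}\le\sqrt{\EXP{\dist^2(x_{T+1},S)}}$ then halves the geometric exponent and square-roots the $O(1/T^2)$ factor, producing the stated bound. The only real subtlety beyond the projection-method proof is the bookkeeping for the extra term carried inside $h_k$: I would need to confirm that $\a_kL\tau\|v_k-u_k\|^2$ enters the final bound only through the initial value $\EXP{h_1}$ — the corresponding running terms being absorbed into the discarded $f_k\ge0$ — so that it contributes precisely the $\frac{\tau\kappa}{\mu\a_1}$ term and does not corrupt the per-iteration noise level.
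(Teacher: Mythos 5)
Your overall architecture (divide by $\gamma_k^2$, telescope, split the initial condition into the $\|v_1-x^*\|^2$ and $\|v_1-u_1\|^2$ pieces, then transfer to $x_{T+1}$ via \eqref{dist_eq_opt} and to $\dist(x_{T+1},S)$ via \eqref{dist_eq_geom} and Jensen) matches the paper, and your constant bookkeeping ($\gamma_0^{-2}=4/(\mu^2\a_0^2)$, the bound $\a_1/\a_0^2\le 1/\a_1$, the $\tau\kappa$ factor) lands on exactly the stated rate. The genuinely different choice is your driving recursion: you run the telescoping on the compound quantity $h_k=\|v_k-x^*\|^2+\a_kL\tau\|v_k-u_k\|^2$ via \eqref{Popov-mart}, whereas the paper returns to the raw inequality \eqref{Popov_no3a}, keeps the signed $\|v_k-u_k\|^2$ and $\|v_{k+1}-u_{k+1}\|^2$ terms explicit, and only after weighting by $\gamma_k^{-2}$ cancels the positive $\frac{2L\tau}{\mu\gamma_k}\EXP{\|v_k-u_k\|^2}$ generated at iteration $k$ against the negative $\bigl(\frac{1-2/\tau}{\gamma_{k-1}^2}-\frac{4L}{\mu\tau\gamma_{k-1}}\bigr)\EXP{\|v_k-u_k\|^2}$ left over from iteration $k-1$ (see \eqref{Popov_no5}--\eqref{Popov_thm2_eqn3}); the step-size cap \eqref{Popov_step_size} is what makes that combined coefficient nonnegative.

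The subtlety you flag at the end is precisely where your route breaks. The one-step contraction \eqref{sup_mart}--\eqref{Popov-mart} does not actually follow from \eqref{Popov_no3}: with $A_k=1-\mu\a_k+2\tau L^2\a_k^2$, the right-hand side of \eqref{Popov_no3} contains $A_k\|v_k-x^*\|^2+\a_kL\tau\|v_k-u_k\|^2$, and to dominate this by $\lambda_k h_k=\lambda_k\|v_k-x^*\|^2+\lambda_k\,\a_kL\tau\|v_k-u_k\|^2$ you need $\lambda_k\ge 1$ on the second component --- the term $\a_kL\tau\|v_k-u_k\|^2$ enters $h_k$ with exactly the weight $\a_kL\tau$, so its relative coefficient is $1$, not $\a_kL\tau$. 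Hence no factor $\max(A_k,\a_kL\tau)<1$ contracts $h_k$, and the running $\a_kL\tau\|v_k-u_k\|^2$ terms are \emph{not} absorbed into the discarded $f_k$ (which collects only the negative $\|v_{k+1}-u_{k+1}\|^2$ and $\|x_k-u_{k+1}\|^2$ terms). The decay of the $\|v_k-u_k\|^2$ component is inherently a cross-iteration effect, which is why the paper's rate proof keeps those terms and pairs them across consecutive indices rather than invoking $h_k$. (To be fair, the relation you lean on appears verbatim in the paper's proof of Theorem~\ref{Popov_thm1}, so you inherited the issue rather than created it; but the paper's proof of Theorem~\ref{Popov_thm2} deliberately avoids it, and your argument as written would need to be repaired the same way. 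Your side observation that $\max(1-\frac{\mu\a_k}{2},\a_kL\tau)=1-\frac{\mu\a_k}{2}$ already at $k=1$ under the assumed step size is correct, but it does not cure this.)
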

\begin{proof}
    Since the conditions of 
    Theorem~\ref{Popov_thm1} are satisfied with $\zeta_k$ as in~\eqref{zeta_selection4},
    we obtain relation~\eqref{Popov_no3}, which we restate below for convenience: {a.s.}
    \begin{align}
        &\EXP{\| v_{k+1} - x^* \|^2 \mid \widetilde \cF_{k-1}} \leq (1-\mu \a_k + 2 \tau L^2 \a_k^2) \| v_k - x^* \|^2 \nonumber\\
        &+ \a_k^2 \left(\nu + \frac{1}{\zeta_k} \right) \|F(x^*)\|^2 - \left(1-\frac{2}{\tau}-\frac{2\a_k L}{\tau}\right) \EXP{\| v_{k+1} - u_{k+1} \|^2 \mid \widetilde \cF_{k-1}} \nonumber\\
        & + \a_k L \tau \| v_k - u_k \|^2 - \left( 1 - \frac{1}{\nu} - 2\a_k\mu - \a_k L \tau \right) \EXP{\| x_{k} - u_{k+1} \|^2 \mid \widetilde \cF_{k-1}}, \label{Popov_no3a}
    \end{align}
    where the constants $\tau>2$, and $\nu>1$. We select the step size $\a_k$ as
    \begin{align}
         0 < \a_k \leq \min \left( \frac{4}{\mu(k+1)}, \frac{\mu}{4 \tau L^2}, \frac{1-\frac{2}{\tau}}{\frac{2L}{\tau}+L\tau}, \frac{1-\frac{1}{\nu}}{2\mu+L\tau} \right). \label{Popov_stepsize2}
    \end{align}
    Since $\a_k\le \frac{\mu}{4\tau L^2}$, it follows that $2\tau L^2\a_k\le \frac{\mu}{2}$,
    implying that $1-\mu \a_k + 2 \tau L^2 \a_k^2 \leq 1 - \frac{\mu \a_k}{2}$. The condition
    $\a_k\le \frac{1-\frac{1}{\nu}}{2\mu+L\tau}$
    implies that $1 - \frac{1}{\nu} - 2\a_k\mu - \a_k L \tau\ge0$.
    Thus, using these estimates in~\eqref{Popov_no3a} and taking the total expectation in the resulting relation, we obtain
    \begin{align}
        \EXP{\| v_{k+1} - x^* \|^2} \leq &\left(1- \frac{\mu \a_k}{2} \right) \EXP{\| v_k - x^* \|^2} - \left(1-\frac{2}{\tau}-\frac{2\a_k L}{\tau}\right) \EXP{\| v_{k+1} - u_{k+1} \|^2} \nonumber\\
        & + \a_k L \tau \EXP{\| v_k - u_k \|^2} + \a_k^2 \left(\nu + \frac{1}{\zeta_k} \right) \|F(x^*)\|^2 . \label{Popov_thm2_eqn1}
    \end{align}
    Let us define $\gamma_k=\frac{\mu \a_k}{2}$, 
    which in view of the condition on $\a_k$ in~\eqref{Popov_stepsize2} yields
    \begin{align}
        \gamma_k \leq \min \left( \frac{2}{k+1}, \frac{\mu^2}{8 \tau L^2}, \frac{\mu \left(1- \frac{2}{\tau}\right)}{\frac{4L}{\tau}+ 2L\tau}, \frac{\mu \left(1 - \frac{1}{\nu}\right)}{4\mu + 2L\tau} \! \right) , \label{Popov_step_size}
    \end{align}
    with $\tau > 2$ and $\nu>1$. Then, we divide both sides of relation~\eqref{Popov_step_size} by $\gamma_k^2$ and use the fact that $\frac{1-\gamma_k}{\gamma_k^2} \leq \frac{1}{\gamma_{k-1}^2}$ to obtain
    \begin{align}
        \gamma_k^{-2} \EXP{\| v_{k+1} - x^* \|^2} \leq &\gamma_{k-1}^{-2} \EXP{\| v_k - x^* \|^2} 
        \!-\! \left(\frac{1}{\gamma_k^2}\left(1 \!-\! \frac{2}{\tau}\right)-\frac{4 L}{\mu \tau \gamma_k}\right) \EXP{\| v_{k+1} \!-\! u_{k+1} \|^2} \nonumber\\
        & + \frac{2 L \tau}{\mu \gamma_k} \EXP{\| v_k - u_k \|^2} + \frac{4}{\mu^2} \left(\nu + \frac{1}{\zeta_k} \right) \|F(x^*)\|^2 . \label{Popov_no5}
    \end{align}
    Next, we sum across all the terms of equation \eqref{Popov_no5} from $k=1$ to some iterate index $T\ge1$ and use the relation \eqref{zeta_order} and $\nu = O(1)$ to obtain
    \begin{align}
        \gamma_T^{-2} &\EXP{\| v_{T+1} - x^* \|^2} + \sum_{k=2}^{T} \left(\frac{1}{\gamma_{k-1}^2}\left(1-\frac{2}{\tau}\right)-\frac{4 L}{\mu \tau \gamma_{k-1}} - \frac{2 L \tau}{\mu \gamma_k} \right) \EXP{\| v_{k} - u_{k} \|^2} + \nonumber\\
        &+\left(\frac{1}{\gamma_T^2}\left(1-\frac{2}{\tau}\right)-\frac{4 L}{\mu \tau \gamma_T}\right) \EXP{\| v_{T+1} - u_{T+1} \|^2} \nonumber\\
        &\leq \gamma_{0}^{-2} \EXP{\| v_1 - x^* \|^2} + \frac{2 L \tau}{\mu \gamma_1} \EXP{\| v_1 - u_1 \|^2} + O \left( \frac{4 T}{\mu^2} \right) \|F(x^*)\|^2 . \label{Popov_thm2_eqn3}
    \end{align}
    The condition on the step size in~\eqref{Popov_step_size} ensures that all the terms on the left hand side and the right hand side of~\eqref{Popov_thm2_eqn3} are non-negative. Hence, we can drop the second, and third terms on the left hand side of~\eqref{Popov_thm2_eqn3} and, thus, obtain 
    \begin{align}
        \gamma_T^{-2} \EXP{\| v_{T+1} - x^* \|^2} \!\leq\! \frac{4}{\mu^2 \a_0^2} \EXP{\| v_1 - x^* \|^2} \!+\! \frac{4 L \tau}{\mu^2 \a_1} \EXP{\| v_1 - u_1 \|^2} \!+\! O \left( \frac{4 T}{\mu^2} \right) \|F(x^*)\|^2 . \nonumber
    \end{align}
    Note that $\gamma_T^{-2} = \frac{(T+1)^2}{4}$ when $T$ is sufficiently large. The preceding relation gives an upper bound for $\EXP{\| v_{T+1} - x^* \|^2}$. Taking the total expectation on both sides of~\eqref{dist_eq_opt}, we have $\EXP{\| x_{T+1} - x^* \|^2} \leq \EXP{\| v_{T+1} - x^* \|^2}$, which gives an upper bound for $\EXP{\| x_{T+1} - x^* \|^2}$.
    Now, consider~\eqref{dist_eq_geom} for $k=T+1$ and take the total expectation on both sides of the equation to obtain
    \begin{align}
        \EXP{\dist^2(x_{T+1},S)} &\leq \EXP{\|v_{T+1}-x^*\|^2} \max_{j \in \cJ} (1-q)^{N_{k,j}} \nonumber\\
        &\leq \max_{j \in \cJ} (1-q)^{N_{T+1,j}} \times O \left(\frac{4}{\mu^2 \a_0^2 T^2} + \frac{4\tau \kappa}{\mu \a_1 T^2} + \frac{1}{\mu^2 T} \|F(x^*)\|^2 \right) . \nonumber
    \end{align}
    Using Jensen's inequality, $\EXP{\dist(x_{T+1},S)} \leq \sqrt{\EXP{\dist^2(x_{T+1},S)}}$, we finally obtain the estimate for the expected feasibility violation of $x_k$. 
\end{proof}
Theorem \ref{Popov_thm2} shows that the iterates $v_{T+1}$ and $x_{T+1}$, for Algorithm~\ref{algo_Popov_method}, converge to the solution $x^*$ in expectation with the same rate $O(1/T)$ as for Algorithms~\ref{algo_proj_method} and \ref{algo_Kor_method}. Also, the convergence rate of $\EXP{\dist(x_{T+1},S)}$ for all the Algorithms~\ref{algo_proj_method}, \ref{algo_Kor_method}, and \ref{algo_Popov_method} is the same.
The rates and the step size in Theorem~\ref{Popov_thm2} depend on parameters $\tau>2$ and $\nu>1$, which are artifacts of our analysis.
%
We can choose $\nu =2$ so as to have similar coefficients for $1/T$ term for all the algorithms previously discussed. For the selection of $\tau$, from relation~\eqref{Popov_stepsize2} we see that the second and the fourth terms ensure a larger step size for a small $\tau$, whereas the third term has inflection points for $\tau$, which when solved gives $\tau = 2 + \sqrt{6}$. This choise of $\tau$ might be good for the third term but not for the second term in \eqref{Popov_stepsize2}. Optionally, we can choose $\tau$ so that $ \frac{\mu}{4 \tau L^2} = \frac{1-\frac{2}{\tau}}{\frac{2L}{\tau}+L\tau}$, which gives $\tau = \frac{8L^2(1+\sqrt{1+\mu(4/L-\mu/L^2)})}{8L^2 - 2\mu L} = \frac{8(1+\sqrt{1+(4/\kappa - 1/\kappa^2)})}{8 - 2/\kappa}$. Since $\kappa \ge 1$, we have $4/\kappa > 1/\kappa^2$ and $2/\kappa < 2 < 8$. Hence, 
$8(1+\sqrt{1+(4/\kappa - 1/\kappa^2)})>16$ and 
$8 - 2/\kappa < 8$, implying that $\tau>2$.



\section{Simulation}\label{sec:simulations}

We present the experimental demonstration of our algorithms on a two player matrix game and an imitation learning toy example. For all the simulations, we took step sizes as per Theorems \ref{Proj_meth_thm_rate}, \ref{thm_Kor_conv_rates}, and \ref{Popov_thm2}, with $\nu =2$ and $\tau = \frac{8L^2(1+\sqrt{1+\mu(4/L-\mu/L^2)})}{8L^2 - 2\mu L}$.

\begin{figure*}[t]\centering
	\begin{subfigure}{0.24\textwidth}
		\includegraphics[width=\linewidth, height = 0.75\linewidth]
		{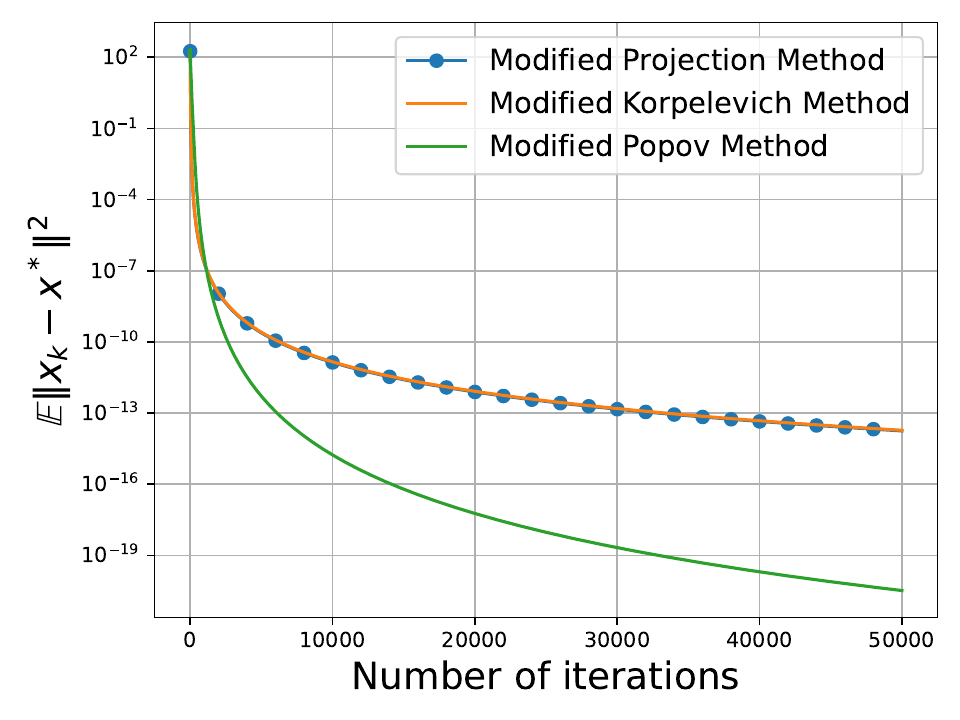}
		\caption{$\mu=1$ and $L=3$}
		\label{well_cond}
	\end{subfigure}
	\begin{subfigure}{0.24\textwidth}
		\includegraphics[width=\linewidth,height = 0.75\linewidth]
		{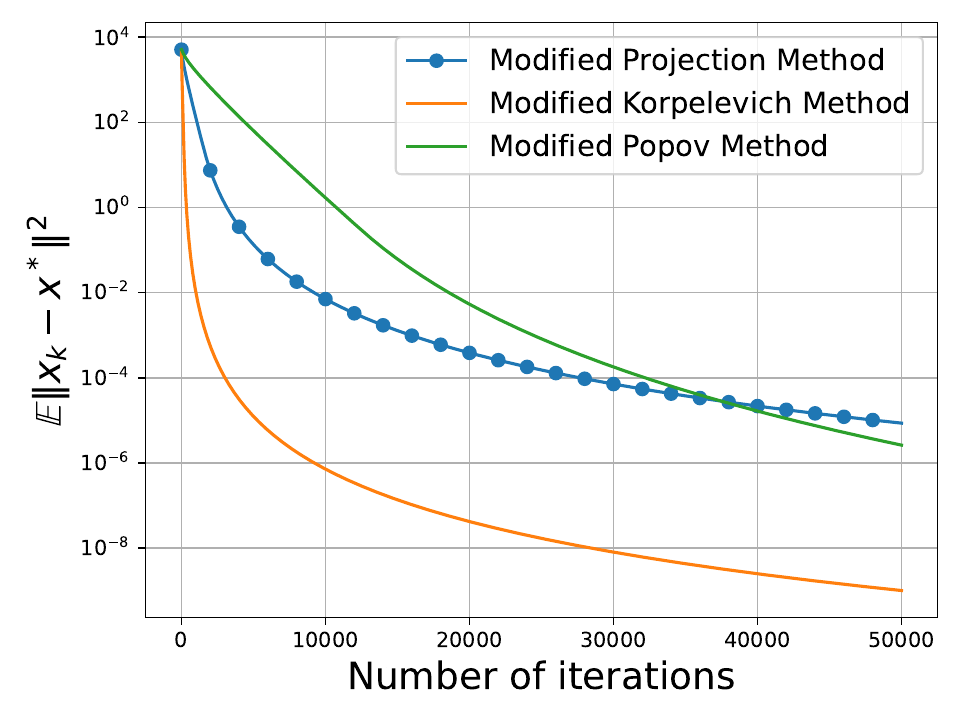}
		\caption{$\mu=0.05$, $L=1$}
		\label{ill_cond}
	\end{subfigure}
	\begin{subfigure}{0.24\textwidth}
		\includegraphics[width=\linewidth,height = 0.75\linewidth]
		{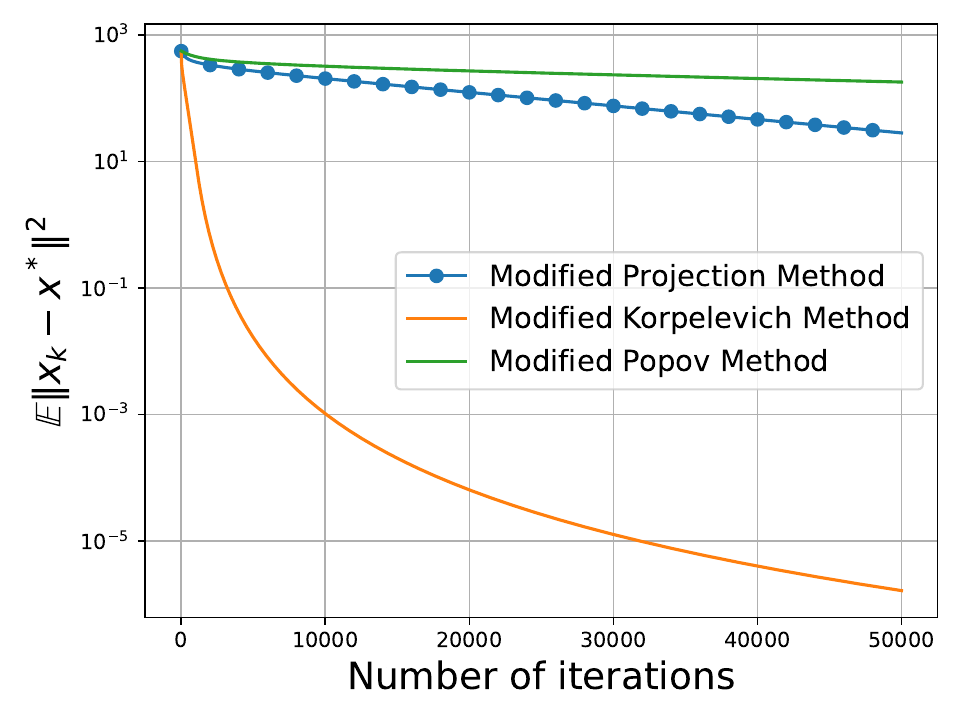}
		\caption{$\mu=0.01$, $L=10$}
		\label{ill_cond2}
	\end{subfigure}
        \begin{subfigure}{0.24\textwidth}
		\includegraphics[width=\linewidth,height = 0.75\linewidth]{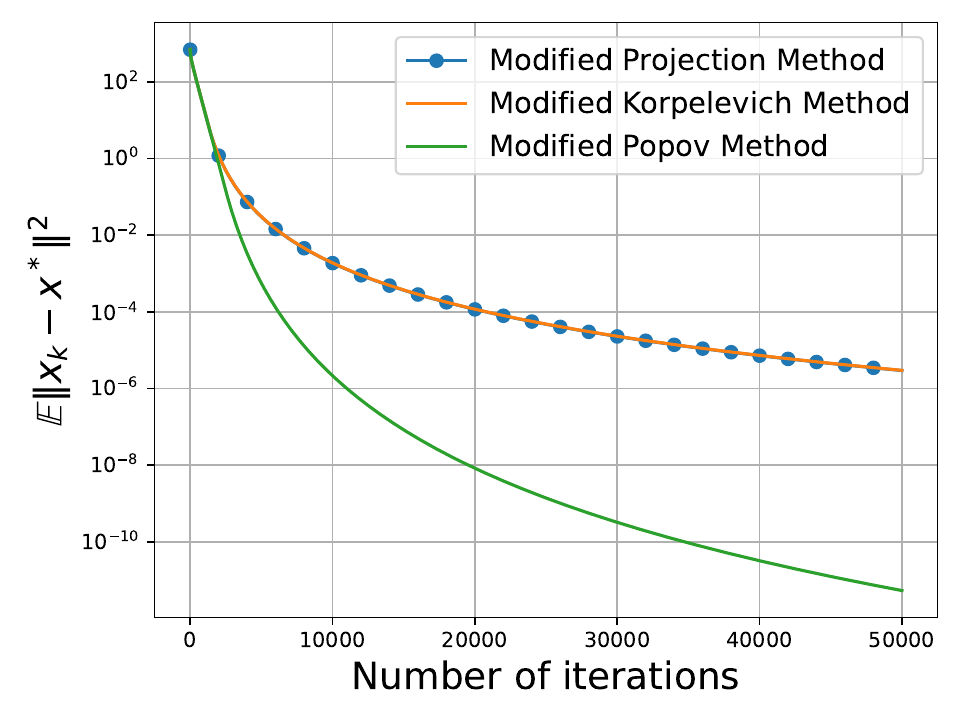}
		\caption{Plot~\ref{ill_cond2} with big $\a_0$}
		\label{fig:large_step}
	\end{subfigure}
\caption{Convergence results of two player matrix game for (a) $\kappa=3$, (b) $\kappa=20$, (c) $\kappa=1000$, and (d) $\kappa=1000$ 
for Algorithms~\ref{algo_proj_method}--\ref{algo_Popov_method} with $N_{k,j}=1, \forall j \in \cJ, k\geq 1$.}
\label{experiments}\vspace{-2pc} 
\end{figure*}


\textbf{Two player matrix game: } The game is as follows:
\begin{align}
    \text{Agent 1:} \quad & \min_{x_1 \in Y_1} \la x_1, A x_1 \ra + \la x_1, B x_2 \ra + \la p, x_1 \ra \nonumber\\
    &s.t. \; \la x_1, Q_i x_1 \ra + \la b_i, x_1 \ra \leq c_i , \; \forall i=1,\ldots,N, \nonumber\\
    \text{Agent 2:} \quad &\min_{x_2 \in Y_2} \la x_2, C x_2 \ra + \la x_1, D x_2 \ra + \la s, x_2 \ra \nonumber\\
    &s.t. \; \la x_2, R_i x_2 \ra + \la l_i, x_2 \ra \leq h_i , \; \forall i=1,\ldots,N , \nonumber
\end{align}
where the box constraints $Y_1 = Y_2 = [-10000,10000]^{100} \in \R^{100}$, and the joint decision vector across agents is $x = [x_1,x_2]^T \in \R^{200}$. The mapping $F$ and its Jacobian $\nabla F$ are
\begin{align}
    F(x) = \begin{bmatrix}
        A+A^T & B \\
        D^T & C+C^T
    \end{bmatrix} x + \begin{bmatrix}
        p \\
        s
    \end{bmatrix} , \quad \nabla F(x) = \begin{bmatrix}
        A+A^T & B \\
        D^T & C+C^T
    \end{bmatrix} . \label{mapping_F}
\end{align}
The matrices $A$, $B$, $C$, and $D$ are generated so that the Jacobian $\nabla F$ is a positive definite with eigenvalues in the interval $[\mu,L]$ for some $\mu>0$. To achieve that, we generate a diagonal matrix $\Lambda$ by sampling the eigenvalues from $U[\mu,L]$, where $U$ is the uniform distribution. Then, we create a matrix $M$, whose elements are sampled from a random normal distribution, and use QR decomposition to write it as $M = G K$, where $G$ is an orthogonal matrix 
$K$ is an upper triangular matrix. We use the matrix $G$ to set $\nabla F = G \Lambda G^T$, ensuring that the eigenvalues of $\nabla F$ lie within the range $[\mu, L]$. Once the matrix $\nabla F$ is generated, we compute the eigenvalues of $\nabla F$ and update $\mu$ to be its minimum eigenvalue and $L$ to be its maximum eigenvalue. 
The generated matrix $\nabla F$ is split into matrix blocks to obtain  $A+A^T$, $B$, $D^T$ and $C+C^T$. The vectors $p$ and $s$ are sampled from a random normal distribution. Now, for the generation of the constraints, we set the solution $x^*=[x_1^*,x_2^*]^T$ of the unconstrained VI$(\R^{100}\times \R^{100},F)$ to be feasible. 
Setting $F(x^*) = 0$, we get $x^* = [x_1^*,x_2^*]^T = - (\nabla F)^{-1} [p,s]^T$. The matrices $Q_i$ and $R_i$ are positive semi-definite with eigenvalues in the interval $[0,2]$; they are generated in the same way as $\nabla F$. The vectors $b_i$, and $l_i$ are generated from standard normal distribution, while $c_i$ and $h_i$ are given by $c_i = \la {x_1^*}, Q_i x_1^* \ra + \la b_i, x_1^* \ra + \delta_i$ and $h_i = \la {x_2^*}, R_i x_2^* \ra + \la l_i, x_2^* \ra + \chi_i$ for all $i = 1, \ldots, N$, where $\delta_i$ and $\chi_i$ are sampled uniformly from [1,2]. The constraint sets are convex since the matrices $Q_i$ and $R_i$ are positive semi-definite; hence Assumption \ref{asum_closed_set} is satisfied. Also, $Y_1$ and $Y_2$ are compact, implying by~\cite[Proposition~4.2.3]{bertsekas2003convex} that Assumption~\ref{asum-subgrad-bound} is satisfied. Moreover, the Jacobian of $F$ is a positive definite matrix, so Assumptions~\ref{asum_lipschitz} and \ref{asum_strong_monotone} are satisfied. The choice of $c_i$ and $h_i$ ensures that $g_{a_j}(x^*_j)<0$ for $j=1,2$ and $a_j \in \{1,\ldots, N\}$; thus, $x_1^*$ and $x_2^*$ are interior points of the respective constraint sets. Since all the functional constraints are quadratic, Assumption~\ref{asum-regularmod} is satisfied by 
\cite[Theorem 3.1]{luo1994extension}. 
The number of constraints is $N=10000$, with $\beta = 1$ and the initial iterate $x_0$ drawn from a random normal distribution. We ran $5$ experiments to compute the empirical average of the distance to the solution, which is presented in Figs.~\ref{well_cond}, \ref{ill_cond}, \ref{ill_cond2}, and \ref{fig:large_step}. 
We  set $N_{k,j}=1$ for all $k$ and $j=1,2$, where the single sample is drawn uniformly from the index set 
$\{1,\ldots,N\}$. Choosing a larger number of samples may not be beneficial  since the solution $x^*$ to VI$(S,F)$ is such that $F(x^*)=0$. We see that for well conditioned problem (Fig.~\ref{well_cond}), Algorithm \ref{algo_Popov_method} performs best. As the condition number $\kappa=L/\mu$ starts increasing (Fig.~\ref{ill_cond}), Algorithm~\ref{algo_Kor_method} gets better as the initial theoretical step size selection is larger than for Algorithms~\ref{algo_proj_method} and \ref{algo_Popov_method}. Due to this reason, Algorithms~\ref{algo_proj_method} and~\ref{algo_Popov_method} perform worse in Fig.~\ref{ill_cond2}, as they require longer time to converge, as they both have $O(\kappa^4/T^2)$ convergence rate for this scenario. For the last scenario, we have a simulation shown in Fig.~\ref{fig:large_step} with a larger initial step size $\a_0 = \frac{1}{4(L+\mu)}$ for Algorithms~\ref{algo_proj_method} and~\ref{algo_Popov_method}, which is same as Algorithm~\ref{algo_Kor_method}. Thus, the step sizes for Algorithms~\ref{algo_proj_method}, \ref{algo_Kor_method}, and~\ref{algo_Popov_method} are $\a_k = \min \left( \frac{2}{\mu(k+1)}, \frac{1}{4(L+\mu)} \right)$, $\a_k = \min \left( \frac{2}{\mu(k+1)}, \frac{1}{4(L+\mu)} \right)$, and $\a_k = \min \left( \frac{4}{\mu(k+1)}, \frac{1}{4(L+\mu)} \right)$ respectively. As indicated in the plots, all the methods converge a larger step size is beneficial for Algorithms~\ref{algo_proj_method} and~\ref{algo_Popov_method}, where Algorithm~\ref{algo_Popov_method} is converging faster. In Fig.~\ref{fig:large_step}, the initial step size of all the algorithms is not fine tuned and this simulation emphasises that all the algorithms do converge and a larger step size helped for worse conditioned problem.


\begin{figure*}[t]\centering
	\begin{subfigure}{0.48\textwidth}
		\includegraphics[width=\linewidth,height = 0.75\linewidth]
		{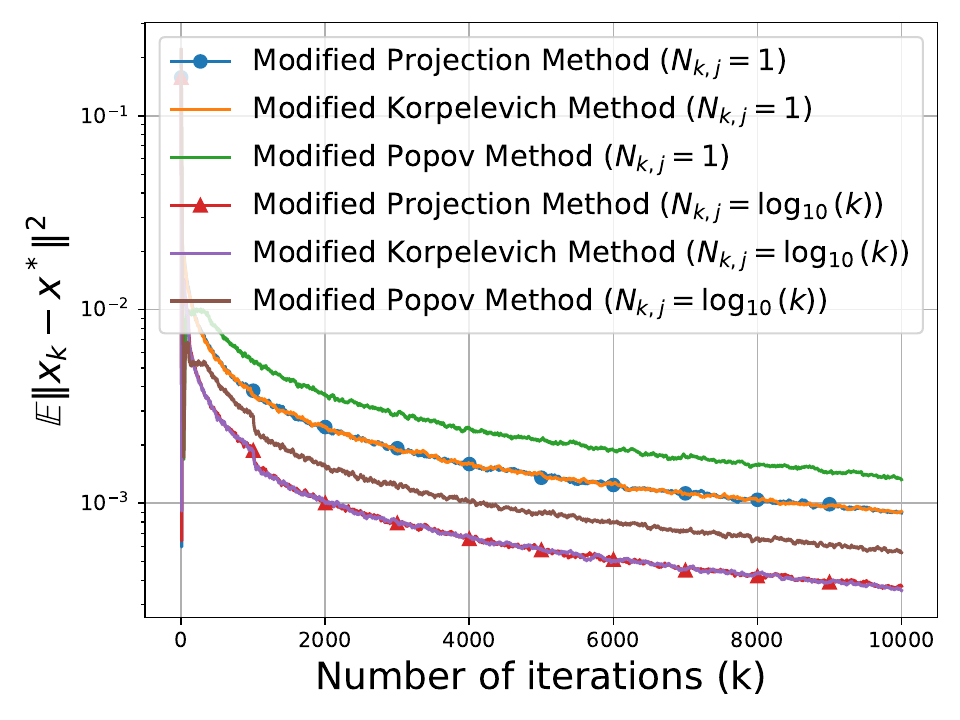}
		\caption{$\xi \sim U[0,0.1], \mu =1, L=3$}
		\label{imitation_error}
	\end{subfigure}
	\begin{subfigure}{0.48\textwidth}
		\includegraphics[width=\linewidth,height = 0.75\linewidth]
		{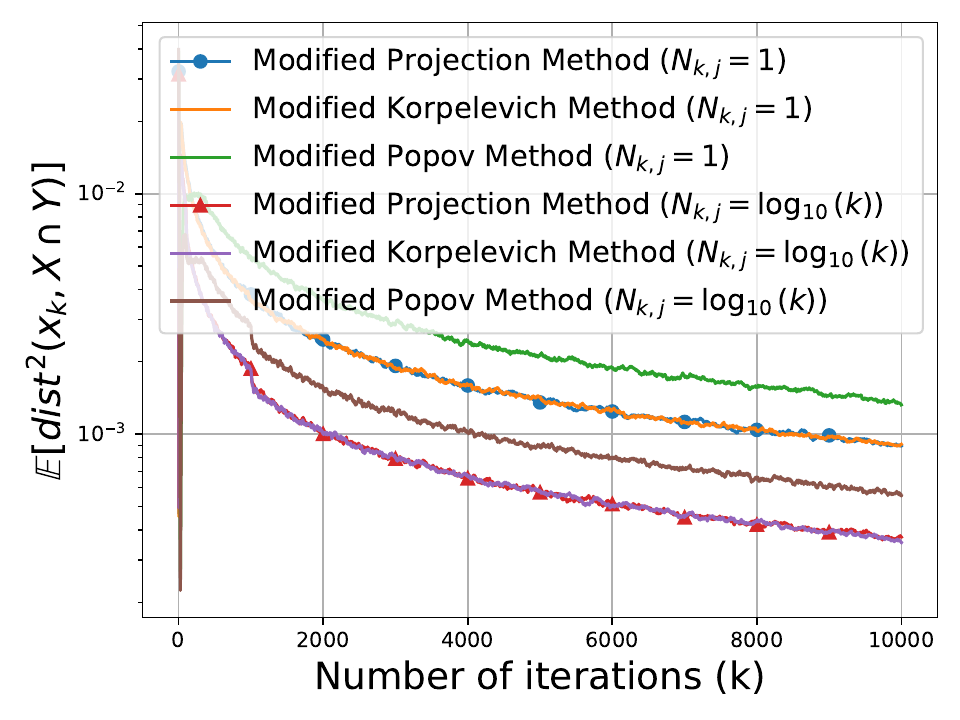}
		\caption{$\xi \sim U[0,0.1], \mu =1, L=3$}
		\label{imitation_dist}
	\end{subfigure}
\caption{Convergence results of two player imitation learning problem with $\kappa=3$ for Algorithms~\ref{algo_proj_method}--\ref{algo_Popov_method} plotted with both $N_{k,j}=1$ and $N_{k,j}= \lceil \log_{10}(k) \rceil$, $\forall j \in \cJ, k\geq 1$.}
\label{experiments2}\vspace{-1.5pc} 
\end{figure*}


\textbf{Imitation learning with exploration: } The game is as follows:
\begin{align}
    \text{Agent 1:} \quad &\min_{x_1 \in \R^2} \;\; \la x_1, x_2 \ra + \|x_1\|_2^2; \;\; s.t. \;\;  x_1\in[0.1,10]^2 ,  \nonumber\\
    \text{Agent 2:} \quad & \min_{x_2 \in \R^2} \;\; \la x_1, x_2 \ra + \|x_2\|_2^2; \;\; s.t. \;\; \|x_2-x_1\|_2^2 \leq \xi , \;\; \forall \xi \in [0,0.1],\label{imitation_prob}
\end{align}
where $\xi$ is a random exploration parameter with a uniform distribution on the interval [0,0.1], and the joint decision vector is $x = [x_1,x_2]^T \in \R^4$. The game can be viewed as a generalized game with the constraints of agent $2$ coupled with the constraints of agent $1$, and the set $Y = [0.1,10]^2 \times \R^2$.
Although the problem constraints don't fit exactly the framework of \eqref{eq-problem} (since agent $2$ has coupled constraints with the decision of agent $1$), still we can solve this problem using our approach where for each agent, we considering the other agent's decision fixed at the most recent iterate.  The mapping of the game is
$F(x) = \begin{bmatrix}
        2 & 1 \\
        1 & 2
    \end{bmatrix} x$ 
    The minimum and maximum eigenvalues of $\nabla F$ are $\mu = 1$ and $L=3$, so Assumptions~\ref{asum_lipschitz} and~\ref{asum_strong_monotone} are satisfied. 
    The set $X_1=\R^2$ while $X_2=\{x_1\}$ (a singleton), given that agent $1$ chooses $x_1$. Thus, the constraint sets $S_i=Y_i\cap X_i$
    of agents $i=1,2$ are closed, convex, and non-empty; hence Assumption~\ref{asum_closed_set} is satisfied. Also, since the constraint set of agent $1$ is compact and the decisions of agent $2$ lie within a ball with center $x_1$ and a maximal radius $\xi= 0.1$, the maximal constraint set of agent $2$ is also compact. Hence, Assumption~\ref{asum-subgrad-bound} is satisfied by~\cite[Proposition~4.2.3]{bertsekas2003convex}. 

    Next we show that $x^*= [0.1,0.1,0.1,0.1]^T$ is $x^*$ is the solution of VI$(S, F)$ associated with the game, i.e., $\la F(x^*), x-x^* \ra \geq 0, \forall x \in S$ with $x^* = [x_1^*,x_2^*]^T$. Since $\cap_{\xi \sim U[0,0.1]} \{x_2 : \|x_2-x_1\|_2^2 \leq \xi\} = \{x_1\}$, we have $x_1^* = x_2^*$, implying that  $F(x^*) = 3 [x_1^*,x_2^*]^T = 3x^*$. Thus, using the relation for the solution, we obtain $\la x^*, x\ra \geq \|x^*\|^2$ for all $x \in S$. Letting $x^*= [0.1,0.1,0.1,0.1]^T$ and $x =[x_1,x_2]^T$, the relation for the solution reduces to $\la x_1,{\bf 1}\ra +\la x_2,{\bf 1}\ra  \geq 0.4$, where ${\bf 1}$ is the vector of ones. 
    The constraint set for agent $1$ implies that $\la x_1,{\bf 1}\ra \geq 0.2$, and since the constraint set for agent to 2 is $x_2=x_1$, it follows that $\la x_1,{\bf 1}\ra +\la x_2,{\bf 1}\ra  \geq 0.4$,
    thus showing that $x^* = [0.1,0.1,0.1,0.1]^T$ is the solution of the VI associated with the game.
    
All the algorithms are initialized  with a random $x_0$ sampled from a uniform distribution on the interval $[0,1]$. For agent $1$, we directly project on its set $S_1=[0.1,10]^2$ (no random feasibility updates).
For agent $2$, we use Algorithm~\ref{algo_proj_steps} with $\beta = 1$
and 
take random realizations of $\xi \sim U[0,0.1]$ in each iteration with
two different experiment setups corresponding to sample sizes $N_{k,2}=1$ for all $k$ and $N_{k,2}=\lceil \log_{10}(k)\rceil$ for all $k$. Here, $\o_{k,2}^i$ in Algorithm~\ref{algo_proj_steps} is a random realization of $\xi$, drawn independently if the number of sampled constraints is larger than 1.
 We ran each experiment $1000$ times to compute the empirical average for squared distance of the iterates  to the solution and to the constraint set, plotted in Figs.~\ref{imitation_error} and~\ref{imitation_dist}, respectively. We see that Algorithms~\ref{algo_proj_method} and~\ref{algo_Kor_method} converge with similar rates and have better performance than Algorithm~\ref{algo_Popov_method}, due to a larger initial step~size $\a_0$. Moreover, the sample size $N_{k,2}=\lceil \log_{10}(k)\rceil$ works better than $N_{k,2}=1$ since $F(x^*)\ne0$ at the solution $x^*$ to the VI$(S,F)$, so a larger number of feasibility updates is beneficial.


\section{Conclusion and Future Directions}\label{sec:conclusion}
The paper presents the modified versions of Projection, Korpelevich, and Popov methods with random feasibility updates for solving the VI problem \eqref{VI_problem} with a
large (possibly infinite) number of constraints, represented as the intersection of convex functional level sets. We proved the convergence of the iterates in almost-sure sense and in-expectation, established the expected convergence rates, and simulated the algorithms for two problems (matrix game and imitation learning with exploration). 
Our analysis and results can be further extended to address merely monotone VIs, as well as stochastic VIs.

\bibliographystyle{siamplain}
\bibliography{references}

\begin{thebibliography}{10}

\bibitem{acciaio2021cournot}
{\sc B.~Acciaio, J.~B. Veraguas, and J.~Jia}, {\em Cournot--nash equilibrium and optimal transport in a dynamic setting}, SIAM Journal on Control and Optimization, 59 (2021), pp.~2273--2300.

\bibitem{alizadeh2023randomized}
{\sc Z.~Alizadeh, A.~Jalilzadeh, and F.~Yousefian}, {\em Randomized lagrangian stochastic approximation for large-scale constrained stochastic nash games}, arXiv preprint arXiv:2304.07688,  (2023).

\bibitem{bai2024learning}
{\sc Q.~Bai, W.~U. Mondal, and V.~Aggarwal}, {\em Learning general parameterized policies for infinite horizon average reward constrained mdps via primal-dual policy gradient algorithm}, arXiv preprint arXiv:2402.02042,  (2024).

\bibitem{bertsekas2003convex}
{\sc D.~Bertsekas, A.~Nedic, and A.~Ozdaglar}, {\em Convex analysis and optimization}, vol.~1, Athena Scientific, 2003.

\bibitem{boroun2023projection}
{\sc M.~Boroun, E.~Y. Hamedani, and A.~Jalilzadeh}, {\em Projection-free methods for solving nonconvex-concave saddle point problems}, arXiv preprint arXiv:2306.11944,  (2023).

\bibitem{chavdarova2023primal}
{\sc T.~Chavdarova, T.~Yang, M.~Pagliardini, and M.~Jordan}, {\em A primal-dual approach to solving variational inequalities with general constraints}, in The Twelfth International Conference on Learning Representations, 2023.

\bibitem{cheng2022functional}
{\sc Y.~Cheng, G.~Lan, and H.~E. Romeijn}, {\em Functional constrained optimization for risk aversion and sparsity control}, arXiv preprint arXiv:2210.05108,  (2022).

\bibitem{cui2021analysis}
{\sc S.~Cui and U.~V. Shanbhag}, {\em On the analysis of variance-reduced and randomized projection variants of single projection schemes for monotone stochastic variational inequality problems}, Set-Valued and Variational Analysis, 29 (2021), pp.~453--499.

\bibitem{facchinei2003finite}
{\sc F.~Facchinei and J.-S. Pang}, {\em Finite-dimensional variational inequalities and complementarity problems}, Springer, 2003.

\bibitem{fercoq2019almost}
{\sc O.~Fercoq, A.~Alacaoglu, I.~Necoara, and V.~Cevher}, {\em Almost surely constrained convex optimization}, in International Conference on Machine Learning, PMLR, 2019, pp.~1910--1919.

\bibitem{gower2022cutting}
{\sc R.~M. Gower, M.~Blondel, N.~Gazagnadou, and F.~Pedregosa}, {\em Cutting some slack for sgd with adaptive polyak stepsizes}, arXiv preprint arXiv:2202.12328,  (2022).

\bibitem{gu2021multi}
{\sc S.~Gu, J.~G. Kuba, M.~Wen, R.~Chen, Z.~Wang, Z.~Tian, J.~Wang, A.~Knoll, and Y.~Yang}, {\em Multi-agent constrained policy optimisation}, arXiv preprint arXiv:2110.02793,  (2021).

\bibitem{heim2019constrained}
{\sc E.~Heim}, {\em Constrained generative adversarial networks for interactive image generation}, in Proceedings of the IEEE/CVF Conference on Computer Vision and Pattern Recognition, 2019, pp.~10753--10761.

\bibitem{hoffman2003approximate}
{\sc A.~J. Hoffman}, {\em On approximate solutions of systems of linear inequalities}, in Selected Papers Of Alan J Hoffman: With Commentary, World Scientific, 2003, pp.~174--176.

\bibitem{hu1989approximate}
{\sc H.~Hu and Q.~Wang}, {\em On approximate solutions of infinite systems of linear inequalities}, Linear Algebra and its applications, 114 (1989), pp.~429--438.

\bibitem{korpelevich1976extragradient}
{\sc G.~M. Korpelevich}, {\em The extragradient method for finding saddle points and other problems}, Matecon, 12 (1976), pp.~747--756.

\bibitem{lanctot2017unified}
{\sc M.~Lanctot, V.~Zambaldi, A.~Gruslys, A.~Lazaridou, K.~Tuyls, J.~P{\'e}rolat, D.~Silver, and T.~Graepel}, {\em A unified game-theoretic approach to multiagent reinforcement learning}, Advances in neural information processing systems, 30 (2017).

\bibitem{lewis1998error}
{\sc A.~S. Lewis and J.-S. Pang}, {\em Error bounds for convex inequality systems}, Generalized convexity, generalized monotonicity: recent results,  (1998), pp.~75--110.

\bibitem{li2013global}
{\sc G.~Li}, {\em Global error bounds for piecewise convex polynomials}, Mathematical programming, 137 (2013), pp.~37--64.

\bibitem{liu2015averaging}
{\sc J.~Liu, Y.~Gu, and M.~Wang}, {\em Averaging random projection: A fast online solution for large-scale constrained stochastic optimization}, in 2015 IEEE International Conference on Acoustics, Speech and Signal Processing (ICASSP), IEEE, 2015, pp.~3586--3590.

\bibitem{luo1994extension}
{\sc X.-D. Luo and Z.-Q. Luo}, {\em Extension of hoffman’s error bound to polynomial systems}, SIAM Journal on Optimization, 4 (1994), pp.~383--392.

\bibitem{malitsky2015projected}
{\sc Y.~Malitsky}, {\em Projected reflected gradient methods for monotone variational inequalities}, SIAM Journal on Optimization, 25 (2015), pp.~502--520.

\bibitem{miryoosefi2019reinforcement}
{\sc S.~Miryoosefi, K.~Brantley, H.~Daume~III, M.~Dudik, and R.~E. Schapire}, {\em Reinforcement learning with convex constraints}, Advances in neural information processing systems, 32 (2019).

\bibitem{mokhtari2020unified}
{\sc A.~Mokhtari, A.~Ozdaglar, and S.~Pattathil}, {\em A unified analysis of extra-gradient and optimistic gradient methods for saddle point problems: Proximal point approach}, in International Conference on Artificial Intelligence and Statistics, PMLR, 2020, pp.~1497--1507.

\bibitem{necoara2021minibatch}
{\sc I.~Necoara and A.~Nedi{\'c}}, {\em Minibatch stochastic subgradient-based projection algorithms for feasibility problems with convex inequalities}, Computational Optimization and Applications, 80 (2021), pp.~121--152.

\bibitem{necoara2022stochastic}
{\sc I.~Necoara and N.~K. Singh}, {\em Stochastic subgradient projection methods for composite optimization with functional constraints}, The Journal of Machine Learning Research, 23 (2022), pp.~12146--12180.

\bibitem{nedic2010random}
{\sc A.~Nedi{\'c}}, {\em Random projection algorithms for convex set intersection problems}, in 49th IEEE Conference on Decision and Control (CDC), IEEE, 2010, pp.~7655--7660.

\bibitem{nedic2011random}
{\sc A.~Nedi{\'c}}, {\em Random algorithms for convex minimization problems}, Mathematical programming, 129 (2011), pp.~225--253.

\bibitem{nedic2019random}
{\sc A.~Nedi{\'c} and I.~Necoara}, {\em Random minibatch subgradient algorithms for convex problems with functional constraints}, Applied Mathematics \& Optimization, 80 (2019), pp.~801--833.

\bibitem{polyak1969minimization}
{\sc B.~T. Polyak}, {\em Minimization of unsmooth functionals}, USSR Computational Mathematics and Mathematical Physics, 9 (1969), pp.~14--29.

\bibitem{polyak2001random}
{\sc B.~T. Polyak}, {\em Random algorithms for solving convex inequalities}, in Studies in Computational Mathematics, vol.~8, Elsevier, 2001, pp.~409--422.

\bibitem{popov1980modification}
{\sc L.~D. Popov}, {\em A modification of the arrow-hurwicz method for search of saddle points}, Mathematical notes of the Academy of Sciences of the USSR, 28 (1980), pp.~845--848.

\bibitem{robbins1971convergence}
{\sc H.~Robbins and D.~Siegmund}, {\em A convergence theorem for non negative almost supermartingales and some applications}, in Optimizing methods in statistics, Elsevier, 1971, pp.~233--257.

\bibitem{shao2024imitating}
{\sc Q.~Shao, P.~Varakantham, and S.-F. Cheng}, {\em Imitating cost constrained behaviors in reinforcement learning}, in 34th International Conference on Automated Planning and Scheduling, 2024.

\bibitem{solodov1999new}
{\sc M.~V. Solodov and B.~F. Svaiter}, {\em A new projection method for variational inequality problems}, SIAM Journal on Control and Optimization, 37 (1999), pp.~765--776.

\bibitem{tseng1995linear}
{\sc P.~Tseng}, {\em On linear convergence of iterative methods for the variational inequality problem}, Journal of Computational and Applied Mathematics, 60 (1995), pp.~237--252.

\bibitem{van2012dynamic}
{\sc A.~Van~den Berg, I.~Bos, P.~J.-J. Herings, and H.~Peters}, {\em Dynamic cournot duopoly with intertemporal capacity constraints}, International Journal of Industrial Organization, 30 (2012), pp.~174--192.

\bibitem{wang2015incremental}
{\sc M.~Wang and D.~P. Bertsekas}, {\em Incremental constraint projection methods for variational inequalities}, Mathematical Programming, 150 (2015), pp.~321--363.

\bibitem{wang2016stochastic}
{\sc M.~Wang and D.~P. Bertsekas}, {\em Stochastic first-order methods with random constraint projection}, SIAM Journal on Optimization, 26 (2016), pp.~681--717.

\bibitem{woodstock2023splitting}
{\sc Z.~Woodstock and S.~Pokutta}, {\em Splitting the conditional gradient algorithm}, arXiv preprint arXiv:2311.05381,  (2023).

\bibitem{yang2022solving}
{\sc T.~Yang, M.~I. Jordan, and T.~Chavdarova}, {\em Solving constrained variational inequalities via a first-order interior point-based method}, arXiv preprint arXiv:2206.10575,  (2022).

\bibitem{zhang2024primal}
{\sc L.~Zhang, N.~He, and M.~Muehlebach}, {\em Primal methods for variational inequality problems with functional constraints}, arXiv preprint arXiv:2403.12859,  (2024).

\end{thebibliography}
\end{document}